 \theoremstyle{plain}
\newtheorem{thm}{Theorem}[section]
\newtheorem{lemma}[thm]{Lemma}
\newtheorem{prop}[thm]{Proposition}
\newtheorem{cor}[thm]{Corollary}
\theoremstyle{definition}
\newtheorem{remark}[thm]{Remark}
\newtheorem{example}[thm]{Example}
\numberwithin{equation}{section}
\newcommand{\cev}[1]{\reflectbox{\ensuremath{\vec{\reflectbox{\ensuremath{#1}}}}}}
\def\cA{\mathcal{A}}
\def\cC{\mathcal{C}}
\def\cF{\mathcal{F}}
\def\cG{\mathcal{G}}
\def \cP{\mathcal{P}}
\def\CC{\mathbb{C}}
\def\ZZ{\mathbb{Z}}
\def\Card{\mathrm{Card}}
\def\proj{\mathrm{proj}}
\def\dist{\mathrm{dist}}
\def\e{\mathrm{end}}
\def\scA{\mathscr{A}}
\def\scH{\mathscr{H}}
\renewcommand{\@makefnmark}{\mbox{\textsuperscript{}}}
\title{Distance regularity in buildings and structure constants in Hecke algebras}
\author{Peter Abramenko
\and
James Parkinson\footnote{Research supported under the Australian Research Council (ARC) discovery grant DP110103205.}
\and
Hendrik Van Maldeghem}
\date{\today}
\begin{document}

\maketitle

\begin{abstract} 
In this paper we define generalised spheres in buildings using the simplicial structure and Weyl distance in the building, and we derive an explicit formula for the cardinality of these spheres. We prove a generalised notion of distance regularity in buildings, and develop a combinatorial formula for the cardinalities of intersections of generalised spheres. Motivated by the classical study of algebras associated to distance regular graphs we investigate the algebras and modules of Hecke operators arising from our generalised distance regularity, and prove isomorphisms between these algebras and more well known parabolic Hecke algebras. We conclude with applications of our main results to non-negativity of structure constants in parabolic Hecke algebras, commutativity of algebras of Hecke operators, double coset combinatorics in groups with $BN$-pairs, and random walks on the simplices of buildings. 
\end{abstract}

\section*{Introduction}

\textit{Buildings} are combinatorial/geometric objects with diverse applications in Lie theory and related fields. Spheres and intersections of spheres in buildings (primarily of spherical and affine types) have been studied due to their connections with combinatorial representation theory~\cite{Par:06,GL:05}, harmonic analysis on groups of $p$-adic type~\cite{Mac:71,Car:99,Par:06b}, incidence structures related to groups of Lie type~\cite{BCN:89}, random walk theory~\cite{Par:07}, association schemes~\cite{BCN:89}, and number theory~\cite{SS:96,Set:08}.

In this paper we define \textit{generalised spheres} in buildings of arbitrary type using the simplicial structure and Weyl distance in the building. We investigate the combinatorics of these spheres, the algebraic structures related to them, and provide applications of our results, including the non-negativity of structure constants in parabolic Hecke algebras, combinatorics of double coset decompositions in groups with $BN$-pairs, and limit theorems for random walks on buildings. 

A main motivation for this paper is to facilitate the study of Hecke algebras related to buildings beyond the spherical and affine cases. Our methodology stems from the theory of \textit{distance regular graphs}, and so we begin by briefly recalling this theory (see \cite{BCN:89}). Let $\Gamma=(V,E)$ be a locally finite graph with graph metric $d(\cdot,\cdot)$, and for each $x\in V$ and $k\in\mathbb{N}$ let
$
F_k(x)=\{y\in V\mid d(x,y)=k\}
$
be the sphere of radius $k$ centred at~$x$. The graph $\Gamma$ is \textit{distance regular} if the intersection cardinality $|F_k(x)\cap F_{\ell}(y)|$ depends only on $k,\ell$ and $d(x,y)$. In other words, there are integers $a_{k,\ell}^m$, called the \textit{intersection numbers} of the graph, such that $a_{k,\ell}^m=|F_k(x)\cap F_{\ell}(y)|$ whenever $d(x,y)=m$. In particular, the cardinality $|F_k(x)|=a_{k,k}^0$ does not depend on~$x\in V$. The infinite distance regular graphs are completely classified \cite{Iva:83}. In contrast there is no complete classification of finite distance regular graphs, although many partial results exist, for example there are precisely $13$ finite cubic distance regular graphs  (see~\cite{BBS:86}).  

A primary tool in the study of finite distance regular graphs is the \textit{Bose-Mesner algebra} of the graph. As a $\CC$-vector space, this algebra is the linear span $\scA$ of the matrices $\{A_k\mid k=0,1\ldots,d\}$ where $d$ is the diameter of $\Gamma$ and $A_k$ is the $k$-adjacency matrix of the graph (with $(x,y)$-th entry~$1$ if $d(x,y)=k$ and $0$ otherwise). The distance regular property is equivalent to the fact that the matrices $A_k$ satisfy linear relations
\begin{align}\label{eq:linearrelation}
A_kA_{\ell}=\sum_{m=0}^da_{k,\ell}^mA_m\quad\textrm{for all $k,\ell\in \{0,1,\ldots,d\}$}.
\end{align}
It follows that $\scA$ is a $\CC$-algebra, and since $a_{k,\ell}^m=a_{\ell,k}^m$ this algebra is commutative.

Let us now return to the setting of the current paper. Let $(W,S)$ be a Coxeter system, and let $X$ be a locally finite, regular building of type~$(W,S)$ with chamber set $\cC$ and Weyl distance function $\delta:\cC\times\cC\to W$. The building $X$ is a labelled simplicial complex, and so each simplex $A\in X$ has a \textit{type} $\tau(A)\subseteq S$. The \textit{cotype} of $A\in X$ is $S\backslash\tau(A)$, and for each $I\subseteq S$ we write $X_I$ for the set of all cotype~$I$ simplices of~$X$. For example, a vertex of $X$ has cotype $S\backslash\{s\}$ for some $s\in S$, and a chamber of $X$ has cotype~$\emptyset$.

The Weyl distance function may be extended to arbitrary simplices of $X$, and if $A\in X_I$ and $B\in X_J$ we have $\delta(A,B)\in R(I,J)$ where $R(I,J)$ is the transversal of minimal length $(W_I,W_J)$ double coset representatives (here $W_I$ and $W_J$ are standard parabolic subgroups of~$W$). For $A\in X_I$ and $w\in R(I,J)$ the \textit{generalised sphere} of ``radius'' $(J,w)$ centred at $A$ is by definition
$$
F(A,J,w)=\{B\in X_J\mid \delta(A,B)=w\}.
$$
If the parabolic subgroup $W_I$ is infinite then the cardinality of the sphere $F(A,J,w)$ is often infinite (even in the case of a thin building; that is, a Coxeter complex), and thus we confine our attention to the case that $W_I$ is finite, in which case $I$ is called a \textit{spherical subset} of~$S$.

The main results of this paper are as follows. 
\begin{enumerate}
\item[(1)] In Theorem~\ref{thm:sphere} we prove a new theorem which determines $|F(A,J,w)|$ in the most general situation. Only very special cases were treated before (see \cite[Proposition~2.7]{Car:99}, \cite[Theorem~5.15]{Par:06}, \cite[Proposition~3.9]{Set:08}), often with considerable effort. Our approach, making use of projections in buildings, is efficient and yields a much more general result.
\item[(2)] Having deduced the most general formula for the numbers $|F(A,J,w)|$, we turn our attention to the considerably more complicated intersection numbers $|F(A,J,u)\cap F(B,J,v)|$. In order to prove a formula for these numbers we introduce the new idea of \textit{pointed pregalleries} in the Coxeter complex, a model which is inspired by (although distinct from) the alcove walk model developed by Ram~\cite{Ram:06} and Parkinson, Ram, Schwer~\cite{PRS:09} for the study of affine Hecke algebras and affine flag varieties. Our formula for the intersection cardinalities is given in Theorem~\ref{thm:pathsv1}, and explicit computations using this formula are given in Examples~\ref{ex:F41} and~\ref{ex:E82}.
\item[(3)] In Section~\ref{sec:4} we define generalised Hecke operators on buildings and investigate the Hecke algebras $\scA(I,I)$ and modules $\scA(I,J)$ spanned by these operators. These algebras and modules are the natural analogues of the Bose-Mesner algebra of a distance regular graph. In Theorem~\ref{thm:product} we prove that our Hecke algebras $\scA(I,I)$ are isomorphic to the \textit{parabolic Hecke algebras} $\mathbf{1}_I\scH\mathbf{1}_I$ introduced in~\cite{CIK:71}, and that our modules $\scA(I,J)$ are both left and right modules for parabolic Hecke algebras. Our combinatorial interpretation of the structure constants in these parabolic Hecke algebras yields new insights, for example the positivity result given in Corollary~\ref{cor:nonneg}. 
\end{enumerate}

The interpretation of the structure constants in ``non-parabolic'' Hecke algebras in terms of buildings was made in~\cite{Par:06} (this is the case $I=J=\emptyset$ above), and association scheme theoretic interpretations of the structure constants have been developed by Zieschang~\cite{Zie:96}. In the spherical case implicit connections between Hecke algebras and buildings via finite groups with $BN$-pairs are given by Iwahori~\cite{Iwa:64}, Matsumoto~\cite{Mat:64},  Curtis, Iwahori, and Kilmoyer~\cite{CIK:71}. Moreover, Brouwer, Cohen and Neumaier \cite[Chapter~10]{BCN:89}, Brouwer and Cohen~\cite{BC:85}, and Gomi~\cite{Gom:96} compute structure constants in some spherical cases. In the present paper our point of view is as general as possible. We consider buildings of arbitrary type, with the only assumptions being local finiteness and a very mild regularity hypothesis (which is automatically satisfied in the $2$-spherical case). In this generality such buildings may not admit an ``interesting'' group action, but of course our results also cover the highly symmetric cases including locally finite affine buildings and buildings arising from Kac-Moody groups over finite fields. Also in these cases our formulae for $|F(A,J,w)|$ and $|F(A,J,u)\cap F(B,J,v)|$, and the interpretation of the parabolic Hecke algebras are new. 

Other motivations for our investigations include the applications presented in Section~\ref{sec:5}. In particular we prove a non-negativity result for the structure constants in parabolic Hecke algebras, and classify the algebras of Hecke operators on regular buildings which are commutative. We also obtain combinatorial formulae for double coset cardinalities in groups with $BN$-pairs, and outline how our algebras generalise the theory of Gelfand pairs associated to buildings. Finally we prove a local limit theorem for a random walk on the simplices of an $\tilde{A}_2$ building.

\section{Background and definitions}\label{sec:1}

\subsection{Coxeter groups and complexes}

A \textit{Coxeter system} $(W,S)$ is a group $W$ together with a
generating set~$S$ with defining relations
$$
s^2=1\quad\textrm{and}\quad (st)^{m_{st}}=1\quad\textrm{for all $s,t\in S$ with $s\neq t$},
$$
where $m_{st}=m_{ts}\in\ZZ_{\geq 2}\cup\{\infty\}$ for all $s\neq t$
(if $m_{st}=\infty$ then it is understood that there is no relation
between $s$ and $t$). We shall always assume in this paper that the
\textit{rank} of $(W,S)$, defined as $|S|$, is finite.

The \textit{length} of $w\in W$ is
$$
\ell(w)=\min\{n\geq 0\mid w=s_1\cdots s_n\textrm{ with }s_1,\ldots,s_n\in S\}.
$$
We say that a decomposition $w=s_1\cdots s_n\textrm{ with
}s_1,\ldots,s_n\in S$ is \textit{reduced} if $n = \ell(w)$. The
\textit{Bruhat order} on $W$ can be defined as follows: For $u, w \in W,$ $u < w$ if and only if a reduced decomposition of
$u$ can be obtained by deleting some of the $s_i$ in a reduced
decomposition $w=s_1\cdots s_n$ of $w$ (so that in particular
$\ell(u) < \ell(w)$).

A \textit{standard subgroup} of $W$ is by definition a subgroup of the form
$W_I=\langle \{s\mid s\in I\}\rangle$ with $I$ a subset of $S$. Such
a subset $I$ is called \textit{spherical} if $W_I$ is finite. To every
Coxeter system $(W,S)$ one associates the \textit{Coxeter complex}
$\Sigma(W,S)$, see \cite[Section 3.1]{ABr:08}. Thus $\Sigma(W,S)$ is a
thin chamber complex, its chambers are identified with the elements
of $W$, its simplices with standard cosets $wW_I$, and $wW_I$ is a
face of $uW_J$ if and only if $uW_J \subseteq wW_I$ (for $u,w \in W$ and 
$I, J \subseteq S$). The \textit{type} of a simplex $wW_I$ in
$\Sigma(W,S)$ is by definition $S \backslash I$ and its
\textit{cotype} is $I$.

\subsection{Double cosets and reduced elements}\label{sec:doublecosets}

Let $(W,S)$ be a Coxeter system. The following facts about cosets and double cosets in $W$ are
well-known, see for instance
\cite[Subsection 2.3.2]{ABr:08}:
\begin{enumerate}
\item[(a)] If $J\subseteq S$ and $w\in W$ then the coset $wW_J$ has a unique
minimal length representative $w_1$, and $\ell(w_1y) = \ell(w_1) +
\ell(y)$ for all $y \in W_J$.
\item[(b)] If $I,J\subseteq S$ and $w\in W$ then the double coset $W_IwW_J$ has
a unique minimal length representative. This representative is
called \textit{$(I,J)$-reduced}, and we let
$$
R(I,J)=\{w\in W\mid w\textrm{ is $(I,J)$-reduced}\}.
$$
Note that $R(I,J)$ indexes the decomposition of $W$ into $W_IwW_J$
double cosets.
\item[(c)] In general it is not true that $\ell(xwy) = \ell(x) + \ell(w) +
\ell(y)$ for all $x \in W_I, y \in W_J$ and $w \in R(I,J)$. In order
to address this issue, we introduce the following notation, where
$I$ and $J$ are subsets of $S$ and $w\in R(I,J)$:
\begin{align*}
W_I(J,w)&=W_I\cap wW_Jw^{-1}\\
M_I(J,w)&=\textrm{the set of all minimal length representatives of
cosets in $W_I/W_I(J,w)$}.
\end{align*}
Note that $W_I(J,w)=\{v\in W_I\mid vwW_J=wW_J\}$ is the subgroup of~$W_I$ stabilising~$wW_J$, and by \cite[Lemma 2.25]{ABr:08} we have that
\begin{align}\label{eq:stabiliser}
W_I(J,w)=W_{I\cap wJw^{-1}}.
\end{align}
Thus applying point (a) it follows that each $v\in W_I$ can be uniquely expressed in
the form 
\begin{align}\label{eq:exp1}
v = xz\text{ with $x \in M_I(J,w)$, $z \in W_I(J,w)$, and
that $\ell(v) = \ell(xz) = \ell(x) + \ell(z)$}.
\end{align}
\end{enumerate}

We now obtain an analogous result for double cosets.

\begin{lemma}\label{lem:exp} Let $I$ and $J$ be subsets of $S$ and let $w\in R(I,J)$. Then
each $v\in W_IwW_J$ can be written in exactly one way as
    $$
    v=xwz\qquad\textrm{with\quad $x\in M_{I}(J,w)$\quad and\quad $z\in W_J$}.
    $$
    Moreover, if $x\in M_I(J,w)$, $w\in R(I,J)$, and $z\in W_J$ then $\ell(xwz)=\ell(x)+\ell(w)+\ell(z)$.
\end{lemma}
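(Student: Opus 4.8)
The plan is to establish the additivity of lengths first, and then use it to deduce both the existence and uniqueness of the decomposition $v = xwz$. For the length formula: suppose $x \in M_I(J,w)$, $w \in R(I,J)$, and $z \in W_J$. Since $w$ is $(I,J)$-reduced, it is in particular the minimal length representative of the coset $wW_J$, so by point (a) we have $\ell(wz) = \ell(w) + \ell(z)$ for all $z \in W_J$. It remains to show $\ell(x(wz)) = \ell(x) + \ell(wz)$, i.e. that $x$ is the minimal length representative of the coset $x \cdot wW_J$ — equivalently, that no reduced word for $x$ can be shortened by multiplying on the right by an element of $W_I$ that brings $xwW_J$ back down. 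This is where the definition of $M_I(J,w)$ as minimal length coset representatives for $W_I/W_I(J,w)$, together with the identification \eqref{eq:stabiliser} of $W_I(J,w)$ with $W_{I\cap wJw^{-1}}$, does the work: by \eqref{eq:exp1} every element of $W_I$ splits additively as (element of $M_I(J,w)$)(element of $W_I(J,w)$), and elements of $W_I(J,w)$ fix $wW_J$. The cleanest route is to first prove that $x$ is $(\emptyset, wJw^{-1})$-reduced-relative-to-$W_I$ in the appropriate sense, then combine with $\ell(wz) = \ell(w)+\ell(z)$ and a standard reduced-word exchange argument inside $W_I$ to push the length through the $w$.

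For existence of the decomposition: take $v \in W_I w W_J$, so $v = a w b$ for some $a \in W_I$, $b \in W_J$. Apply \eqref{eq:exp1} to write $a = xz$ with $x \in M_I(J,w)$, $z \in W_I(J,w)$, additively. Then $v = x z w b$, and since $z \in W_I(J,w) = W_I \cap wW_Jw^{-1}$ we have $w^{-1}zw \in W_J$, so $zw = w(w^{-1}zw)$ with $w^{-1}zw \in W_J$; thus $v = xw(w^{-1}zw)b = xwz'$ with $z' = (w^{-1}zw)b \in W_J$. This gives a decomposition of the required form.

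For uniqueness: suppose $xwz = x'wz'$ with $x, x' \in M_I(J,w)$ and $z, z' \in W_J$. Then $x^{-1}x' w = w z (z')^{-1}$, so $x^{-1}x' \in W_I \cap wW_Jw^{-1} = W_I(J,w)$, hence $x W_I(J,w) = x' W_I(J,w)$; since $x, x'$ are both minimal length representatives of this coset in $W_I/W_I(J,w)$, point (a) forces $x = x'$. Cancelling, $wz = wz'$, whence $z = z'$. The main obstacle is the length formula $\ell(xwz) = \ell(x) + \ell(w) + \ell(z)$ — specifically showing the length adds across the junction between $x$ and $w$ — since the naive concatenation of reduced words need not be reduced when passing through a double coset representative; the resolution is to leverage that $x$, being a minimal coset representative for $W_I/W_{I\cap wJw^{-1}}$, cannot be shortened on the right by anything that stabilises $wW_J$, and that $w$ being $(I,J)$-reduced controls the interaction on the other side.
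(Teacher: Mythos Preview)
Your existence and uniqueness arguments are correct and essentially identical to the paper's: write $a\in W_I$ as $a=xz$ via~\eqref{eq:exp1}, push $z\in W_I(J,w)$ through $w$ into $W_J$; for uniqueness observe $x^{-1}x'\in W_I(J,w)$ and invoke the uniqueness of minimal coset representatives. (Your opening plan to ``establish additivity of lengths first'' is not actually what you do, nor is it needed --- your existence and uniqueness proofs stand on their own.)

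The gap is the length formula, which you correctly flag as the main obstacle but do not prove. Your sketch is muddled in places: the claim ``$x$ is the minimal length representative of the coset $x\cdot wW_J$'' does not parse, since $x\notin xwW_J$ in general; what you presumably mean is that $xw$ is minimal in $xwW_J$. The paper handles this cleanly and in the opposite order to your plan: it proves uniqueness \emph{first}, then derives the length formula from it. Specifically, suppose $\ell(xwz)<\ell(x)+\ell(w)+\ell(z)$. The deletion condition for $(I,J)$-reduced elements (\cite[Lemma~2.24]{ABr:08}) yields $x'\in W_I$, $z'\in W_J$ with $x'<x$ (or $z'<z$) and $x'wz'=xwz$. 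Exactly as in the uniqueness step, $x^{-1}x'\in W_I(J,w)$, so $x'\in xW_I(J,w)$ with $\ell(x')<\ell(x)$, contradicting $x\in M_I(J,w)$.

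Your own idea can be made to work too, via the exchange condition: since $w$ is $(I,\emptyset)$-reduced we already have $\ell(xw)=\ell(x)+\ell(w)$ for \emph{all} $x\in W_I$, so it suffices to show $xw$ is minimal in $xwW_J$. If $\ell(xws)<\ell(xw)$ for some $s\in J$, then because $\ell(ws)>\ell(w)$ the exchanged letter must lie in the $x$-part of a reduced word for $xw$, giving $xws=x'w$ with $x'<x$ in $W_I$; but then $x^{-1}x'=wsw^{-1}\in W_I\cap wW_Jw^{-1}=W_I(J,w)$, again contradicting minimality of $x$. Either route closes the gap; the paper's is shorter because it outsources the exchange argument to \cite[Lemma~2.24]{ABr:08}.
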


\begin{proof} It follows immediately from $W_I = M_I(J,w)W_I(J,w)$
and $W_I(J,w) = W_I \cap wW_Jw^{-1}$ that $W_IwW_J = M_I(J,w)wW_J$.
If $x,x' \in M_I(J,w)$ and $y,y' \in W_J$, then $xwy = x'wy'$
implies $x^{-1}x' = wyy'^{-1}w^{-1} \in W_I(J,w)$, and so $x' = x$
is the unique minimal length representative of $xW_I(J,w)$, which
then also implies $y' = y$. Finally, assume that $\ell(xwz) <
\ell(x)+\ell(w)+\ell(z)$. Using the deletion condition, we find $x'
\in W_I$, $y' \in W_J$ with $x' < x$, $y' < y$ and $x'wy' = xwy$
(see \cite[Lemma 2.24]{ABr:08}). However, as before $xwy = x'wy'$
implies $xW_I(J,w) = x'W_I(J,w)$, contradicting the minimality of
$x$ in $xW_I(J,w)$.
\end{proof}

\subsection{Buildings}

We assume that the reader is already acquainted with the theory of buildings, and our main reference for this subject is \cite[Chapters 4 and
5]{ABr:08}. In this paper, buildings will mainly be considered as
chamber (and hence as simplicial) complexes but the associated Weyl
distance will also play an important role.

In the following, $X$ will always denote a building and $\cC$ its
set of chambers. The apartments of $X$ are all isomorphic to the
standard Coxeter complex $\Sigma(W,S)$ for a Coxeter system $(W,S)$
which is determined, up to isomorphism, by $X$. We then say that $X$
is of \textit{type} $(W,S)$ and of \textit{rank} $|S|$. If we fix an apartment $\cA$
of $X$ and an isomorphism $f : \Sigma(W,S) \rightarrow \cA$, the
\textit{type} $\tau(A)$ of a simplex $A$ of $\cA$ is by definition
the type of $f^{-1}(A)$. There is then a unique extension $\tau: X
\rightarrow \{I \mid I \subseteq S\}$ such that $\tau(A) \subseteq
\tau(B)$ whenever $A$ is a face of $B$ and $\tau(B) = S$ if and only
if $B$ is a chamber of $X$. We fix such a type function on $X$ and
define the \textit{cotype} of a simplex $A \in X$ to be $S \backslash
\tau(A)$. We set
$$
X_I=\{A\in X \mid A \textrm{ is of cotype }I \}.
$$
For $c \in \cC$ and $I \subseteq S$ the unique element of $X_I$
which is a face of $c$ is denoted by $c_I$. For $A\in X$ let 
$$
\cC(A)=\{c\in\cC\mid A \textrm{ is a face of }c\},
$$
and for any subset $M\subset X$ we write $\cC(M)=\bigcup_{A\in M}\cC(A)$. If $A \in X_I$
then $\cC(A) = \{c \in \cC \mid c_I = A \}$. \textit{Panels} of $X$
are codimension 1 simplices; an \textit{$s$-panel} is one of cotype
$\{s\}$ for $s \in S$. The building $X$ is called \textit{thick} if each panel is
contained in at least $3$ chambers.

A \textit{pregallery} in $X$ of \textit{length} $n \in \mathbb N_0$
is a sequence
$$
\gamma=(c_0,s_1,c_1,s_2,c_2,s_3,\ldots,s_n,c_n)
$$
where $s_1,\ldots,s_n\in S$ and $c_0,c_1,\ldots,c_n\in \cC$ and, for
each $j=1,\ldots,n$, either $c_{j-1} = c_j$ or $c_{j-1} \cap c_j$ is
an $s_j$-panel. The \textit{type} of the pregallery
$\gamma=(c_0,s_1,c_1,s_2,\ldots,s_n,c_n)$ is $(s_1,\ldots,s_n)$. The
type is called \textit{reduced} if $w = s_1 \cdots s_n \in W$
satisfies $\ell(w) = n$. The chamber $c_0$ is called the
\textit{start} of $\gamma$, and the chamber $c_n$ is called the
\textit{end} of $\gamma$, written $\e(\gamma)=c_n$.

A \textit{gallery} of \textit{type} $(s_1,\ldots,s_n)$ is a
pregallery $\gamma=(c_0,s_1,c_1,s_2,\ldots,s_n,c_n)$ such that
$c_{j-1} \neq c_j$ for all $j=1,\ldots,n$. Note that the type of the
gallery $\gamma$ is already determined by the sequence $(c_0, c_1
\ldots, c_n)$, although this is not true for pregalleries. For $c,d \in \cC$ the \textit{gallery distance}
$\dist(c,d)$ is the minimal length of galleries starting in $c$ and
ending in $d$.

For any two chambers $c,d \in \cC$ there exists an element $w \in W$
with the following property: There exists a gallery $\gamma$ of
reduced type $(s_1, \ldots, s_n)$ which starts in $c$ and ends in
$d$ if and only if $s_1 \cdots s_n = w$. We set $\delta(c,d) = w$
for this element $w$, thus obtaining a \textit{Weyl distance}
function $\delta : \cC \times \cC \rightarrow W$. For the basic properties of
$\delta$ we refer to \cite[Section 4.8]{ABr:08}. We mention in
passing that $\dist(c,d) = \ell(\delta(c,d))$. For $A \in X_I$ and $B
\in X_J$ (with $I,J \subseteq S$) we have
$$ \delta(\cC(A) \times \cC(B)) = W_I w W_J$$
whenever $w = \delta(c,d)$ for fixed chambers $c \in \cC(A)$ and $d
\in \cC(B)$. This allows us to extend the Weyl distance to arbitrary
simplices of $X$ by setting (for $A\in X_I$ and $B\in X_J$):
\begin{align}\label{eq:deltadist}
\delta(A,B)=\textrm{the unique $(I,J)$-reduced element of
$\delta(\cC(A)\times\cC(B))$}.
\end{align}

\subsection{Retractions and projections}\label{sec:retproj}

We now briefly recall two fundamental concepts of building theory
which we will apply later. First, for a fixed apartment $\cA$ of $X$
and a fixed chamber $a$ of $\cA$, there is the \textit{retraction
$\rho_{\cA,a}$} of $X$ onto $\cA$ centred at $a$. It is the
type-preserving simplicial map from $X$ to $\cA$ with the property that for every apartment $\cA'$
containing $a$ the restriction of $\rho_{\cA,a}$ to $\cA'$ is the
unique type-preserving isomorphism of $\cA'$ onto $\cA$ fixing~$a$.
From this one readily deduces the following facts:
\begin{enumerate}
\item[(R1)] For all $c\in\cC$ we have $\delta(a,c) = \delta(a, \rho_{\cA,a}(c))$.
\item[(R2)] If $\gamma = (c_0, \ldots, c_n)$ is a gallery of type $(s_1,
\ldots, s_n)$ in $X$, then  the image 
$$\rho_{\cA,a}(\gamma) = (
\rho_{\cA,a}(c_0),s_1,\ldots,s_n,\rho_{\cA,a}(c_n))$$ is a pregallery
of the same type in $\cA$.
\end{enumerate}

We now turn to projections. All of the properties relevant for us can
be found in \cite[Section~4.9]{ABr:08}. For any simplex $A \in X$
and any chamber $c \in \cC$, there is a unique chamber in $\cC(A)$,
called the \textit{projection of $c$ onto $A$} and denoted by
$\proj_A(c)$, such that $\dist(\proj_A(c),c) < \dist(d,c)$ for all
$d \in \cC(A) \backslash \{\proj_A(c)\}$. For any two simplices $A,B
\in X$, there is a unique simplex containing $A$, called the
\textit{projection $\proj_A(B)$ of $B$ onto $A$}, such that
$\{\proj_A(c) \mid c \in \cC(B)\} = \cC(\proj_A(B))$. We shall use
the following properties of projections:
\begin{enumerate}
\item[(P1)] If $A \in X_I$, $B \in X_J$ and $w = \delta(A,B)$, then
$\proj_A(B)$ has cotype $I \cap wJw^{-1}$. Furthermore,
$$\{c \in \cC(A) \mid  \text{ there exists } d \in \cC(B)
\text{ such that } \delta(c,d) = w \} = \cC(\proj_A(B)).$$
\item[(P2)] There exist bijections (induced by projection maps) between
$\cC(\proj_A(B))$ and $\cC(\proj_B(A))$.
\end{enumerate}

\subsection{Generalised spheres in regular buildings}\label{sec:reg}

Spheres (and balls) in buildings can be defined using the gallery distance. In this paper we study the refinement of this notion by considering spheres defined using the Weyl distance. For a building $X$ of type $(W,S)$, $c\in\cC$ and $w\in W$, let
$$
\cC_w(c)=\{d\in\cC\mid\delta(c,d)=w\}.
$$
And for $I,J \subseteq S, A\in X_I$ and $w\in R(I,J)$, we define
$$
F(A,J,w)=\{B\in X_J\mid \delta(A,B)=w\}.
$$
That is what we mean by a \textit{generalised sphere}. Note that for
$c \in \cC$ and $w \in W = R(\emptyset, \emptyset)$ we have
$F(c,\emptyset,w) = \cC_w(c)$. It is one of the purposes of the
present paper to compute the cardinalities $|F(A,J,w)|$ for
regular, locally finite buildings and spherical cotype~$I$ (if $I$ is not spherical then the cardinality $|F(A,J,w)|$ is often infinite).

A building $X$ is called \textit{regular} if there is a bijection
between $\cC(\pi)$ and $\cC(\pi')$ whenever $\pi$ and $\pi'$ are
panels of the same cotype. If $X$ is thick and $m_{st}<\infty$ for
all $s,t\in S$ then $X$ is necessarily regular (see for example,
\cite[Theorem~2.4]{Par:06}). If each panel is contained in only
finitely many chambers then we call $X$ \textit{locally finite}.
For the remainder of this paper we assume that $X$ is a regular, locally finite building of type $(W,S)$.

For each $s\in S$ we set
$$
q_s+1=|\cC(\pi_s)|\qquad\textrm{for any $s$-panel $\pi_s$}.
$$
The \textit{parameters} of $X$ are the integers~$(q_s)_{s\in S}$.
Note that $q_s = |\cC_s(c)|$ for any $c \in \cC$. Induction on
$\ell(w)$ shows that if $w\in W$ has a reduced decomposition
$w=s_1\cdots s_{\ell}$, then
\begin{align}\label{eq:c}
|\cC_w(c)|=q_{s_1}\cdots q_{s_{\ell}},
\end{align}
see \cite[Proposition~2.1]{Par:06}. In particular, the number $q_w=q_{s_1}\cdots q_{s_{\ell}} =
|\cC_w(c)|$ (with the convention that $q_1 = 1$) does not depend on
the choice of the chamber $c \in \cC$ or the reduced expression
for~$w$. Therefore $q_{uv}=q_uq_v$ whenever
$\ell(uv)=\ell(u)+\ell(v)$. It also follows that $q_s=q_t$ whenever
the order of $st$ in $W$ is finite and odd (but we will not
explicitly use this here).

If $A\in X_I$ and $c\in\cC(A)$ then
$$
\cC(A)=\bigsqcup_{w\in W_I}\cC_w(c).
$$
Thus if $I$ is spherical and $A\in X_I$ then the cardinality $N(I):=|\cC(A)|$ is finite and does not depend on $A\in X_I$, since
\begin{align}\label{eq:firstcard}
N(I)=|\cC(A)|=\sum_{w\in W_I}q_w.
\end{align}

If $I,J\subseteq S$ with $I$ spherical, $A \in X_I$, $B \in X_J$ and $w
= \delta(A,B)$ then properties (P1) and (P2) in Section~\ref{sec:retproj} imply that
\begin{align}\label{eq:swap}
N(I \cap wJw^{-1}) = N(J \cap w^{-1}Iw).
\end{align}

\section{Counting simplices at given Weyl distance}\label{sec:2}

In the following theorem we give our explicit formula for the cardinality of generalised spheres in a regular building.

\begin{thm}\label{thm:sphere} Let $I$ be a spherical subset of $S$, $J$ a subset of $S$, and suppose that $A\in X_I$. If $w\in R(I,J)$ then
$$
|F(A,J,w)|=\frac{N(I)}{N(I\cap wJw^{-1})}q_w,
$$
where $N(K)$, for $K$ spherical, is given by (\ref{eq:firstcard}).
\end{thm}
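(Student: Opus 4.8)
The plan is to count, for a fixed $A \in X_I$, the pairs $(B, d)$ with $B \in F(A,J,w)$ and $d \in \cC(B)$, i.e. pairs $(B,d)$ with $d \in \cC$ and $B = d_J$ and $\delta(A,B) = w$, in two different ways, using projections to control the fibres. First I would fix a chamber $c \in \cC(A)$ and observe by property (P1) that the set of chambers $c' \in \cC(A)$ for which there exists $d' \in \cC(B)$ with $\delta(c',d') = w$ is exactly $\cC(\proj_A(B))$, which by (P1) has cotype $I \cap wJw^{-1}$; in particular there are $N(I \cap wJw^{-1})$ such chambers $c'$, independently of $B \in F(A,J,w)$.

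Next I would set up the double count on the set $\Omega = \{(c', d') \mid c' \in \cC(A),\ d' \in \cC,\ \delta(c',d') = w\}$. Counting by $d'$ first: for each such $d'$, Lemma~\ref{lem:exp} (applied with the roles of $I$ and $J$, or rather the symmetric reasoning that $W_IwW_J = M_I(J,w)wW_J$) together with the definition of the extended Weyl distance shows that $B := d'_J$ lies in $F(A,J,w)$ exactly when $\delta(c',d') \in W_IwW_J$ with the $(I,J)$-reduced part equal to $w$; hence $\Omega$ is partitioned as $\Omega = \bigsqcup_{B \in F(A,J,w)} \{(c',d') \mid c' \in \cC(\proj_A(B)),\ d' \in \cC(B),\ \delta(c',d') = w\}$. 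For a fixed $B$, property (P2) gives a bijection between $\cC(\proj_A(B))$ and $\cC(\proj_B(A))$, and for each $c'$ the number of $d' \in \cC(B)$ with $\delta(c',d') = w$ equals $|\cC_w(c')| = q_w$ by (\ref{eq:c}) and (P1) (every such $d'$ automatically lies in $\cC(B)$ since $\delta(c',d') = w$ is $(I,J)$-reduced, forcing $d'_J = \proj_B(c')$ and in particular $d' \in \cC(B)$ — this last point needs a small argument using that $w \in R(I,J)$ so that $\ell(wz) = \ell(w)+\ell(z)$ for $z \in W_J$, from Lemma~\ref{lem:exp}). Thus each block of the partition has size $N(I \cap wJw^{-1}) \cdot q_w$, giving $|\Omega| = |F(A,J,w)| \cdot N(I \cap wJw^{-1}) \cdot q_w$.

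Counting $\Omega$ the other way, by $c'$ first: there are $|\cC(A)| = N(I)$ choices of $c' \in \cC(A)$, and for each one the number of $d'$ with $\delta(c',d') = w$ is $|\cC_w(c')| = q_w$. Hence $|\Omega| = N(I)\cdot q_w$. Equating the two expressions for $|\Omega|$ and cancelling $q_w$ (which is nonzero since the building is locally finite, so each $q_s \geq 1$) yields $|F(A,J,w)| = N(I)/N(I\cap wJw^{-1})$... wait, that is missing a factor; the correct bookkeeping is that the block size should be $N(I\cap wJw^{-1})$ times $1$, not times $q_w$ — I will re-examine this. The cleanest fix: count instead the set $\Omega' = \{(c',d') \mid c' \in \cC(A),\ d' \in \cC(B) \text{ for some } B \in F(A,J,w),\ \delta(c',d') = w\}$, where by the above each $B$ contributes $N(I\cap wJw^{-1})\cdot q_w$ pairs (choices of $c' \in \cC(\proj_A(B))$ and then $q_w$ choices of $d'$), while summing over all $c' \in \cC(A)$ first gives at most $N(I)\cdot q_w$; the point is that $\bigsqcup_{B} \cC(B)$ only uses those $d'$ with $d'_J \in F(A,J,w)$, and for a given $c' \in \cC(A)$ every $d'$ with $\delta(c',d')=w$ has $d'_J \in F(A,J,w)$, so in fact $\Omega' = \Omega$ exactly. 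So $N(I)\cdot q_w = |F(A,J,w)|\cdot N(I\cap wJw^{-1})\cdot q_w$ would give the formula without the $q_w$; the resolution is that when I fix $c' \in \cC(\proj_A(B))$ and ask for $d' \in \cC(B)$ with $\delta(c',d') = w$, there is in general \emph{not} a full $q_w$ of them but exactly $q_w/|\{...\}|$ — this is exactly where the local structure of $\proj_B(A)$ enters and where (\ref{eq:swap}) $N(I\cap wJw^{-1}) = N(J\cap w^{-1}Iw)$ must be used. The main obstacle, then, is precisely this fibre computation: showing that for fixed $B \in F(A,J,w)$ the number of pairs $(c',d')$ with $c' \in \cC(A)$, $d' \in \cC(B)$, $\delta(c',d') = w$ equals $N(I\cap wJw^{-1})\cdot q_w = N(J \cap w^{-1}Iw)\cdot q_w$, which I would handle by reducing via projections to the case of chambers in a residue and invoking (\ref{eq:c}) carefully together with the reducedness of $w$ from Lemma~\ref{lem:exp}; once this is in hand, equating with the $c'$-first count $N(I)\cdot q_w$ and dividing gives the claimed $|F(A,J,w)| = \dfrac{N(I)}{N(I\cap wJw^{-1})}\,q_w$.
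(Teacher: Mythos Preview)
Your approach is essentially the paper's: form the set of pairs $\Omega = \{(c',d') \mid c' \in \cC(A),\ \delta(c',d') = w\}$ (equivalently, the paper's disjoint union $\bigsqcup_{c \in \cC(A)} \cC_w(c)$), count it as $N(I)\,q_w$, and compute the fibres of the map $d' \mapsto d'_J$ to $F(A,J,w)$. The confusion is entirely in the fibre count, and your final stated target for it is wrong.

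The error is the claim that ``every such $d'$ automatically lies in $\cC(B)$.'' This is false: if $c' \in \cC(\proj_A(B))$ and $\delta(c',d') = w$, then indeed $d'_J \in F(A,J,w)$, but there is no reason $d'_J = B$. Among the $q_w$ chambers in $\cC_w(c')$, exactly \emph{one} lies in $\cC(B)$, not all $q_w$ of them. Consequently the fibre over $B$ has size $N(I \cap wJw^{-1})$, not $N(I \cap wJw^{-1}) \cdot q_w$. With the correct fibre size you get
\[
N(I)\,q_w \;=\; |\Omega| \;=\; |F(A,J,w)| \cdot N(I \cap wJw^{-1}),
\]
which is exactly the claimed formula; no extra factor needs to appear or be cancelled.

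The cleanest way to see the fibre is the paper's: apply (P1) with the roles of $A$ and $B$ reversed. For fixed $B$, the set $\{d' \in \cC(B) \mid \exists\, c' \in \cC(A)\ \text{with}\ \delta(c',d') = w\}$ equals $\cC(\proj_B(A))$, which has cotype $J \cap w^{-1}Iw$ and hence cardinality $N(J \cap w^{-1}Iw)$. Since for each such $d'$ the corresponding $c'$ is unique (if $c'' \neq c'$ in $\cC(A)$ then $\delta(c'',d') = \delta(c'',c')\,w \neq w$ because $w$ is $(I,J)$-reduced), the fibre has exactly $N(J \cap w^{-1}Iw) = N(I \cap wJw^{-1})$ elements, the last equality being~(\ref{eq:swap}).
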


\begin{proof}
Let $c\in\cC(A)$ and $d\in\cC_w(c)$. Then $d_J\in F(A,J,w)$. If $c'\in \cC(A)$ with $c'\neq c$ then $\cC_w(c)\cap \cC_w(c')=\emptyset$, for if $d\in\cC_w(c)$ and $\delta(c',c)=w_I\in W_I$ then $\delta(c',d)=w_Iw$ (since $w\in R(I,J)$). Hence if $d\in\cC_w(c)$ then $d\notin \cC_w(c')$. 

The above observations show that there is a well defined map
\begin{align*}
f:\bigsqcup_{c\in\cC(A)}\cC_w(c)&\to F(A,J,w),\quad\textrm{given by $f(d)=d_J$}.
\end{align*}
By (\ref{eq:c}) and~(\ref{eq:firstcard}) the domain of $f$ has cardinality $N(I)q_w$. We now compute the cardinalities of the fibres of~$f$. For $B\in F(A,J,w)$ we have
\begin{align*}
f^{-1}(B)&=\{d\in\cC(B)\mid\textrm{there exists $c\in\cC(A)$ such that $\delta(c,d)=w$}\},
\end{align*}
and by (P1) in Section~\ref{sec:retproj} it follows that
\begin{align*}
f^{-1}(B)&=\{d\in\cC(B)\mid \proj_B(A)\subseteq d\}=\cC(\proj_B(A)).
\end{align*}
The cotype of the simplex $\proj_B(A)$ is $J\cap w^{-1}Iw$ (see (P1) in Section~\ref{sec:retproj}), and thus by~(\ref{eq:firstcard}) we have $|f^{-1}(B)|=N(J\cap w^{-1}Iw)\geq 1$. This shows firstly that $f$ is surjective, and secondly that
$$
|F(A,J,w)|=\frac{N(I)q_w}{N(J\cap w^{-1}Iw)}.
$$
The claim now follows since $N(J\cap w^{-1}Iw)=N(I\cap wJw^{-1})$ by~(\ref{eq:swap}).
\end{proof}

\begin{remark} 
We make the following remarks. 
\begin{enumerate}
\item[(a)] In the case of \textit{special vertices} of a fixed type in an affine building, our formula in Theorem~\ref{thm:sphere} recovers~\cite[Proposition~2.7]{Car:99} (in the case of $\tilde{A}_n$ buildings) and \cite[Theorem~5.15]{Par:06} (for general affine buildings).
\item[(b)] We outline a representation theoretic proof of Theorem~\ref{thm:sphere} in Remark~\ref{rem:secondproof}. An advantage of the building theoretic proof presented above is that a combinatorial significance is attached to the denominator $N(I\cap wJw^{-1})$ in the formula. 
\item[(c)] We note that for all finite Coxeter groups $W_I$ there is an explicit closed formula for the polynomial $N(I)$ (this polynomial is often called the \textit{Poincar\'{e} polynomial} of $W_I$). See, for example \cite{Mac:72}. Thus the formula in Theorem~\ref{thm:sphere} can be made completely explicit. 
\end{enumerate}
\end{remark}

The \textit{gallery distance} between simplices $A$ and $B$ is $\mathrm{dist}(A,B)=\ell(\delta(A,B))$. The following elementary corollary gives formulae for the cardinalities of spheres with respect to this numerical distance. 

\begin{cor}\label{cor:sphere}
If $I$ is a spherical subset of $S$, $J$ is a subset of $S$, and $A\in X_I$, then
\begin{align}\label{eq:cor1}
|\{B\in X_J\mid \mathrm{dist}(A,B)=n\}=\sum_{\{w\in R(I,J) \mid\ell(w)=n\}}\frac{N(I)}{N(I\cap wJw^{-1})}q_w.
\end{align}
In particular, if $A\in X_I$ is a vertex of spherical cotype~$I=S\backslash\{s\}$, and if $B$ is a vertex of $X$ with $\mathrm{dist}(A,B)=1$, then $B$ is a vertex of cotype~$I$ and
\begin{align}\label{eq:cor2}
|\{B\in X_I\mid \mathrm{dist}(A,B)=1\}|=\frac{N(I)q_s}{N(I\cap sIs)}=\frac{N(I)q_s}{N(\{t\in I\mid st=ts\})}.
\end{align}
\end{cor}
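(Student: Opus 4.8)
The plan is to derive Corollary~\ref{cor:sphere} directly from Theorem~\ref{thm:sphere} with essentially no new building theory. For the first assertion~(\ref{eq:cor1}), I would start from the observation that $\mathrm{dist}(A,B) = \ell(\delta(A,B))$ by definition, and that for $A \in X_I$, $B \in X_J$ the element $\delta(A,B)$ always lies in $R(I,J)$. Hence the set $\{B \in X_J \mid \mathrm{dist}(A,B) = n\}$ decomposes as the disjoint union over $w \in R(I,J)$ with $\ell(w) = n$ of the generalised spheres $F(A,J,w)$. Summing the cardinalities supplied by Theorem~\ref{thm:sphere} over this finite index set yields~(\ref{eq:cor1}) immediately. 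The only point needing a word of justification is that the union is genuinely disjoint and exhausts the set, which is clear since $\delta(A,B)$ is a well-defined single element of $W$.

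For the second assertion~(\ref{eq:cor2}) I would specialise to $I = S \setminus \{s\}$ and analyse which $w \in R(I,J)$, for varying $J$, can possibly satisfy $\ell(w) = 1$ and contribute a vertex $B$ at gallery distance $1$ from $A$. Since $\mathrm{dist}(A,B) = 1$ forces $\delta(A,B) = t$ for some $t \in S$, and $t$ must be $(I,J)$-reduced, minimal-length considerations pin down both $J$ and $t$: the element $s$ itself is the unique length-one element that is $(I,\cdot)$-reduced on the appropriate side (any $t \in I$ lies in $W_I$, hence is not $(I,J)$-reduced unless it is trivial), so $t = s$ and $B$ must have cotype $J = I$. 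I would then invoke~(\ref{eq:cor1}) with $n = 1$, observing that the sum collapses to the single term $w = s$, giving $|\{B \in X_I \mid \mathrm{dist}(A,B) = 1\}| = N(I) q_s / N(I \cap sIs)$.

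Finally I would identify $I \cap sIs$ explicitly. Using~(\ref{eq:stabiliser}), or directly, $I \cap sIs = \{t \in I \mid sts \in I\}$, and for $t \in I$ with $s \notin I$ one checks that $sts \in W_I$ forces $sts = t$, i.e. $s$ and $t$ commute; conversely if $st = ts$ then trivially $t \in sIs$. Hence $I \cap sIs = \{t \in I \mid st = ts\}$, which is the parabolic subset whose generators are the neighbours-or-not of $s$ in $I$ in the Coxeter diagram, and $N(I \cap sIs) = N(\{t \in I \mid st = ts\})$ as claimed.

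I do not anticipate a serious obstacle here: the corollary is a routine unpacking of Theorem~\ref{thm:sphere} together with elementary facts about reduced double-coset representatives and the description~(\ref{eq:stabiliser}) of the stabiliser subgroup. The only mildly delicate step is the argument in the second part that $\mathrm{dist}(A,B)=1$ with $A$ a vertex of cotype $I = S\setminus\{s\}$ forces $B$ to have cotype exactly $I$ and $\delta(A,B)=s$; this is where one must be careful about which subsets $J$ admit a length-one $(I,J)$-reduced element, but it follows quickly from the fact that any $w \in R(I,J)$ with $\ell(w) = 1$ is a single generator not lying in $W_I$, forcing $w = s$ and then $J$ to contain $s$ while $s$ must remain $(I,J)$-reduced, which (together with $B$ being a vertex) gives $J = I$.
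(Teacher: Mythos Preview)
Your approach is exactly the paper's: partition $\{B\in X_J\mid \mathrm{dist}(A,B)=n\}$ by the value of $\delta(A,B)\in R(I,J)$ and sum Theorem~\ref{thm:sphere} for (\ref{eq:cor1}); for (\ref{eq:cor2}) observe that a length-one $(I,J)$-reduced element lies outside $I\cup J$, forcing $t=s$ and $J=I$, then identify $I\cap sIs$ with the set of $t\in I$ commuting with~$s$. Two small slips to fix when you write it up: you say ``$J$ to contain $s$'' where the argument actually needs $s\notin J$ (right-reducedness of $s$ means $\ell(ss')>\ell(s)$ for all $s'\in J$, hence $s\notin J$, and then $B$ being a vertex gives $J=S\setminus\{s\}=I$); and in the last step the condition defining $I\cap sIs$ is $sts\in I\subseteq S$, not merely $sts\in W_I$ --- it is $sts\in S$ with $s\neq t$ that forces $\ell(sts)=1$ and hence $m_{st}=2$.
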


\begin{proof}
Equation~(\ref{eq:cor1}) follows immediately from Theorem~\ref{thm:sphere}. For equation~(\ref{eq:cor2}), note that if $A\in X_I$ is a vertex of cotype $I=S\backslash\{s\}$ and if $B\in X_J$ is a vertex of cotype $J=S\backslash\{s'\}$  with $\mathrm{dist}(A,B)=1$ then $\delta(A,B)=t$ for some $t\in S$ with $t\in R(I,J)$. Thus $t\notin I\cup J$, and hence $I=J$ and $t=s$. Thus $B$ is a vertex of cotype $I$, and if $I$ is spherical then equation~(\ref{eq:cor1}) gives $|\{B\in X_I\mid \dist(A,B)=1\}|=N(I)q_s/N(I\cap sIs)$. It is elementary that $I\cap sIs=\{t\in I\mid st=ts\}$, hence~(\ref{eq:cor2}).
\end{proof}

In the literature~\cite{SS:96,Set:08} computing the cardinality of the sphere of vertices at numerical distance~$1$ from a given vertex has been an issue even for classical affine buildings of types~$\tilde{A}_n$ and $\tilde{C}_n$ over local fields. Corollary~\ref{cor:sphere}, and in particular~(\ref{eq:cor2}), gives a very general and explicit solution to this problem. 

\begin{example} Let $X$ be an affine building of type~$\tilde{A}_n$ with thickness~$q$. By~(\ref{eq:cor2}) the number of vertices at numerical distance~$1$ from a given vertex of cotype $I=S\backslash\{s\}$ is
\begin{align}\label{eq:conj}
\frac{N(I)}{N(I\cap sIs)}q=\frac{|A_n(q)|}{|A_{n-2}(q)|}q=\frac{(q^{n+1}-1)(q^n-1)}{(q-1)^2}q,
\end{align}
where we use the elementary formula for the number of chambers of an $A_n$ building $A_n(q)$ with thickness~$q$ (or, alternatively, the well known closed form for the Poincar\'{e} polynomial). The formula~(\ref{eq:conj}) was conjectured by Schwartz and Shemanske in~\cite{SS:96}, and proven by Setyadi in~\cite{Set:08}.
\end{example}

\begin{example} 
Let $X$ be an affine building of type~$\tilde{C}_n$, and assume that~$q_s=q$ for all $s\in S$. 
\smallskip

\noindent (a) Suppose first that $s\in S$ is a special type (that is, $s$ is an end node of the Coxeter diagram). Then by~(\ref{eq:cor2}) the number of vertices at numerical distance~$1$ from a given cotype~$I=S\backslash\{s\}$ vertex is
$$
\frac{N(I)}{N(I\cap sIs)}q=\frac{|C_n(q)|}{|C_{n-1}(q)|}q=\frac{q^{2n}-1}{q-1}q,
$$
where we use elementary and well known formulae for the number of chambers of a $C_n(q)$ building with thickness~$q$. This recovers~\cite[Proposition~3.9]{Set:08}. 
\medskip

\noindent (b) Suppose now that $s\in S$ is not a special type. If $s$ is at length $k$ from an end of the Coxeter diagram, then $W_I=C_k\times C_{n-k}$ and $W_{I\cap sIs}=C_{k-1}\times C_{n-k-1}$, and hence the number of vertices at numerical distance~$1$ from a given vertex of cotype $I=S\backslash\{s\}$ is
$$
\frac{N(I)}{N(I\cap sIs)}q=\frac{|C_k(q)|\times |C_{n-k}(q)|}{|C_{k-1}(q)|\times|C_{n-k-1}(q)|}q=\frac{(q^{2k}-1)(q^{2(n-k)}-1)}{(q-1)^2}q.
$$
\end{example}

\section{Intersection numbers in regular buildings}\label{sec:3}

Let $\Gamma=(V,E)$ be a locally finite graph, and for $k\in\mathbb{N}$ let $F_k(x)=\{y\in V\mid d(x,y)=k\}$ be the sphere of radius $k$ centred at~$x\in V$. As recalled in the introduction, the graph $\Gamma$ is \textit{distance regular} if for all $k,\ell\in\mathbb{N}$ we have 
$
|F_k(x)\cap F_{\ell}(y)|=|F_k(x')\cap F_{\ell}(y')|$ whenever $d(x,y)=d(x',y')$. In other words, there are numbers $a_{k,\ell}^m\in\mathbb{N}$ such that 
$$
a_{k,\ell}^m=|F_k(x)\cap F_{\ell}(y)|\quad\text{whenever $y\in F_m(x)$}.
$$ 
Distance regular graphs have been extensively studied, and have a rich algebraic theory centring around the \textit{intersection numbers} $a_{k,\ell}^m$ (see for example \cite{BCN:89}). 

In this section we prove a generalised notion of distance regularity for regular buildings using the Weyl distance between simplices. We also develop a combinatorial formula for the corresponding intersection numbers. Consider first the simplest case, where the simplices are chambers. By \cite[Proposition~3.9]{Par:06} there are numbers $c_{u,v}^w\in \mathbb{N}$ ($u,v,w\in W$) such that
\begin{align}\label{eq:chamberreg}
c_{u,v}^w=|\cC_u(a)\cap \cC_v(b)|\quad\textrm{whenever $a,b\in \cC$ with $b\in\cC_w(c)$}.
\end{align}
We now extend this chamber distance regularity to a general distance regularity result for regular buildings.

\begin{thm}\label{thm:reg} Let $I,J,K$ be spherical subsets of $S$, and suppose that $u\in R(I,J)$, $v\in R(K,J)$, and $w\in R(I,K)$. There are numbers $c_{u,v}^w(I,J,K)\in\mathbb{N}$ such that
$$
c_{u,v}^w(I,J,K)=|F(A,J,u)\cap F(B,J,v)|\quad\text{whenever $A\in X_I$ and $B\in F(A,K,w)$}.
$$
Moreover, for any $z\in W_IwW_K$ we have 
\begin{align}\label{eq:firstform}
c_{u,v}^w(I,J,K)=\frac{1}{N(J)}\sum_{\substack{x\in W_IuW_J\\ y\in W_KvW_J}}c_{x,y}^{z}.
\end{align}
where the numbers $c_{x,y}^z\in\mathbb{N}$ are from~(\ref{eq:chamberreg}). 
\end{thm}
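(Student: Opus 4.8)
The plan is to reduce the statement about simplices to the known chamber-level distance regularity~(\ref{eq:chamberreg}) by a counting/fibration argument, exactly in the spirit of the proof of Theorem~\ref{thm:sphere}. First I would fix $A\in X_I$ and $B\in F(A,K,w)$, so $\delta(A,B)=w$, and then choose chambers $a\in\cC(A)$, $b\in\cC(B)$. The key preliminary observation is that $F(A,J,u)=\{C_J\mid C\in\cC,\ C\in\cC_x(a)\text{ for some }x\in W_IuW_J\}$, because a simplex $C\in X_J$ lies at Weyl distance $u$ from $A$ precisely when the chambers of $\cC(C)$ reachable at Weyl distance in $W_IuW_J$ from $a$ are nonempty; more precisely, by Lemma~\ref{lem:exp} every chamber $c\in\cC(A)$ satisfies $\delta(c,C)\in W_IuW_J$ for exactly the simplices $C\in X_J$ with $\delta(A,C)=u$, and the map $C\mapsto$ (its chambers near $a$) is controlled by projections as in the proof of Theorem~\ref{thm:sphere}. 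So I would set up the ``incidence variety''
\[
Z=\{(c,d)\in\cC(A)\times\cC(B)\mid \delta(a,c)\in W_I,\ \delta(b,d)\in W_K,\ c_J=d_J,\ c_J\in F(A,J,u),\ c_J\in F(B,J,v)\}
\]
and count $|Z|$ in two different ways. Actually it is cleaner to work with a single $a\in\cC(A)$: define $Z'=\{c\in\cC : \delta(a,c)\in W_IuW_J\text{ and }\delta(b,c)\in W_KvW_J\}$ after also fixing $b\in\cC(B)$, but since $\delta(a,b)$ ranges over all of $W_IwW_K$ as $a,b$ vary, one needs to average. This is where the factor $\frac{1}{N(J)}$ and the freedom to pick any $z\in W_IwW_K$ will come from.

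Concretely, I would proceed as follows. Fix $a\in\cC(A)$. For each $C\in F(A,J,u)\cap F(B,J,v)$, the set of $c\in\cC(C)$ with $\delta(a,c)\in W_IuW_J$ is, by (P1), exactly $\cC(\proj_C(A))$, a set of size $N(J\cap u^{-1}Iu)$; but more useful is that the full set $\cC(C)$ has size $N(J)$, and the ``$a$-relevant'' chambers of $\cC(A)$ mapping into $C$ form a single $\cC_u(\cdot)$-type fibre. The honest bookkeeping: count pairs $(c,d)$ with $c\in\cC(A)$, $d\in\cC(B)$, $\delta(c,d)=z$ for a fixed $z\in W_IwW_K$ — by~(\ref{eq:chamberreg}) applied with the pair $(a,b')$ where $\delta(a,b')=z$, actually one wants $\sum_{x\in W_IuW_J,\,y\in W_KvW_J}c_{x,y}^z$ to equal the number of chambers $e\in\cC$ with $\delta(a,e)=x$ for some such $x$ and $\delta(b,e)=y$ for some such $y$, i.e.\ $|\cC(F(A,J,u)_{\le})\cap\cdots|$. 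Grouping these chambers $e$ by the simplex $e_J\in X_J$: each simplex $C\in F(A,J,u)\cap F(B,J,v)$ contributes exactly $N(J)$ chambers (all of $\cC(C)$, since once $C$ is at the right distance from both $A$ and $B$, \emph{every} chamber of $\cC(C)$ is at a distance in $W_IuW_J$ from $a$ and in $W_KvW_J$ from $b$ — this uses $u,v\in R(I,J)$ and Lemma~\ref{lem:exp}), while simplices not in the intersection contribute $0$. Hence $\sum_{x,y}c_{x,y}^z=N(J)\cdot|F(A,J,u)\cap F(B,J,v)|$, which rearranges to~(\ref{eq:firstform}); in particular the right-hand side is independent of the choice of $z\in W_IwW_K$ and of $a,b$, which simultaneously establishes the existence of $c_{u,v}^w(I,J,K)$.

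The main obstacle, and the step needing real care, is the claim that a simplex $C\in X_J$ lies in $F(A,J,u)\cap F(B,J,v)$ if and only if \emph{every} chamber of $\cC(C)$ is simultaneously at a Weyl distance in $W_IuW_J$ from all of $\cC(A)$ and in $W_KvW_J$ from all of $\cC(B)$ — equivalently, the clean translation ``$\delta(a,e)\in W_IuW_J \iff \delta(A,e_J)=u$'' for $e\in\cC$, $a\in\cC(A)$. One direction is immediate from the definition~(\ref{eq:deltadist}) of $\delta$ on simplices and the fact that $\delta(\cC(A)\times\cC(C))=W_I\,\delta(A,C)\,W_J$; the converse requires knowing that distinct double cosets $W_IuW_J$ are disjoint and that $u\in R(I,J)$ is their unique reduced representative, so that membership of a single $\delta(a,e)$ in $W_IuW_J$ pins down $\delta(A,e_J)=u$ unambiguously. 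I would also need to double-check the independence of~(\ref{eq:firstform}) from $z$: this follows because the total count $\sum_{x,y}c_{x,y}^z$ equals $N(J)\cdot|F(A,J,u)\cap F(B,J,v)|$ for the fixed simplices $A,B$ with $\delta(A,B)=w$, and the left side visibly depends on $z$ only through which chambers $e$ we are counting — but every $C$ in the intersection contributes its full $\cC(C)$ regardless of $z$, so the dependence drops out. Everything else is the same projection bookkeeping ($N(\cdot)$ cardinalities, property (P1)) already used for Theorem~\ref{thm:sphere}.
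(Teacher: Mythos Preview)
Your proposal is correct and follows essentially the same approach as the paper: fix chambers $a\in\cC(A)$, $b\in\cC(B)$ with $\delta(a,b)=z$, establish the set identity $\cC(F(A,J,u)\cap F(B,J,v))=\bigsqcup_{x\in W_IuW_J,\,y\in W_KvW_J}\cC_x(a)\cap\cC_y(b)$ via the equivalence $\delta(a,e)\in W_IuW_J\iff\delta(A,e_J)=u$, and then divide the resulting chamber count by $N(J)$. The paper's version is more direct---your detours through the first incidence variety $Z$, the projection sets $\cC(\proj_C(A))$, and the concern about ``averaging'' over choices of $a,b$ are all unnecessary (one simply fixes $a,b$ once and the formula holds for the resulting $z$), but the core argument is identical.
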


\begin{proof}
Let $A\in X_I$ and $B\in F(A,K,w)$, and let $a,b$ be any chambers with $a\in\cC(A)$ and $b\in\cC(B)$ and set $z=\delta(a,b)$. We claim that
$$
\cC(F(A,J,u)\cap F(B,J,v))=\bigsqcup_{\substack{x\in W_IuW_J\\ y\in W_KvW_J}}\cC_x(a)\cap \cC_{y}(b).
$$
For if $d\in\cC(F(A,J,u)\cap F(B,J,v))$ then $D=d_J\in F(A,J,u)\cap F(B,J,v)$ and so $\delta(A,D)=u$ and $\delta(B,D)=v$. Thus $\delta(a,d)\in W_IuW_J$ and $\delta(b,d)\in W_KvW_J$. Conversely, if $d\in\cC_x(a)\cap \cC_y(b)$ with $x\in W_IuW_J$ and $y\in W_KvW_J$ and $D=d_J$ then $\delta(A,D)=u$ and $\delta(B,D)=v$. Thus $D\in F(A,J,u)\cap F(B,J,v)$, and so $d\in \cC(D)\subseteq \cC(F(A,J,u)\cap F(B,J,v))$. 

Hence, by (\ref{eq:chamberreg}), we have 
$$
|\cC(F(A,J,u)\cap F(B,J,v))|=\sum_{\substack{x\in W_IuW_J\\ y\in W_KvW_J}}c_{x,y}^z.
$$
Since each $J$-simplex of $F(A,J,u)\cap F(B,J,v)$ is contained in precisely $N(J)$ chambers of $\cC(F(A,J,u)\cap F(B,J,v))$ formula~(\ref{eq:firstform}) holds. Thus $|\cC(F(A,J,u)\cap F(B,J,v))|$ depends only on $u,v$ and~$w$ (for we could choose $a\in\cC(A)$ and $b\in\cC(B)$ with $\delta(a,b)=w$). 
\end{proof}

\begin{remark}\label{rem:finitesupport} If $I,J,K$ are spherical and if $u\in R(I,J)$, $v\in R(K,J)$, and $w\in R(I,K)$ then $c_{u,v}^w(I,J,K)=0$ if $\ell(w)>\ell(u)+\ell(v)+\ell(w_J)$, where $w_J\in W_J$ is the longest element of~$W_J$. To see this, note that if $A\in X_I$ and $B\in X_K$ with $\delta(A,B)=w$, and if $D\in F(A,J,u)\cap F(B,J,v)$, then there are chambers $a\in\cC(A)$, $d_1,d_2\in\cC(D)$, and $b\in\cC(B)$ with $\delta(a,d_1)=u$ and $\delta(d_2,b)=v^{-1}$. Thus 
$
\ell(w)=\ell(\delta(A,B))\leq \ell(u)+\ell(\delta(d_1,d_2))+\ell(v^{-1})\leq \ell(u)+\ell(v)+\ell(w_J).
$
\end{remark}

The numbers $c_{u,v}^w(I,J,K)$ are called \textit{intersection numbers}. The following refined formula for the intersection numbers will be used for our combinatorial formula in Theorem~\ref{thm:pathsv1}. It appears that the formula in Proposition~\ref{prop:refinement} does not easily follow from the formula in Theorem~\ref{thm:reg} by obvious algebraic manipulations, despite the apparent similarity in the formulae. Thus we give a direct and independent proof below.

\begin{prop}\label{prop:refinement}
Let $I,J,K\subseteq S$ be spherical and suppose that $u\in R(I,J)$, $v\in R(K,J)$, and $w\in R(I,K)$. For any $z\in W_IwW_K$ we have (with $M_K(J,v)$ as in Section~\ref{sec:doublecosets}):
$$
c_{u,v}^w(I,J,K)=\sum_{\substack{x\in W_IuW_J\\ y\in M_K(J,v)v}}c_{x,y}^z.
$$
\end{prop}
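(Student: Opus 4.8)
The plan is to adapt the chamber-counting argument from the proof of Theorem~\ref{thm:reg}, but to organize the count over $J$-simplices rather than over all chambers lying above them, thereby replacing one of the two full double cosets by a transversal. Fix $A\in X_I$ and $B\in F(A,K,w)$, and choose $a\in\cC(A)$, $b\in\cC(B)$ with $\delta(a,b)=z$; such a choice exists for any prescribed $z\in W_IwW_K$ by property of the extended Weyl distance (as in the proof of Theorem~\ref{thm:reg}). By Theorem~\ref{thm:reg} it suffices to show that
\begin{align*}
\sum_{\substack{x\in W_IuW_J\\ y\in M_K(J,v)v}}c_{x,y}^z = \frac{1}{N(J)}\sum_{\substack{x\in W_IuW_J\\ y\in W_KvW_J}}c_{x,y}^z.
\end{align*}

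First I would set up the bijective book-keeping on the right-hand side. For a fixed $d\in\cC$ with $\delta(b,d)\in W_KvW_J$, set $D=d_J\in F(B,J,v)$. The chambers $d'\in\cC(D)=\cC_{\,\cdot}(b)$-fibre above $D$ are exactly the $N(J)$ chambers of $\cC(D)$, and I would show that for each such $d'$ we have $\delta(b,d')\in W_KvW_J$ as well, while $\delta(a,d')$ ranges (as $d'$ ranges over $\cC(D)$) over all of $W_I\delta(a,d)W_J$-translates appropriately; the point is that $\sum_{x,y}c_{x,y}^z$ over the full double cosets counts pairs $(d\in\cC_x(a),\ d\in\cC_y(b))$, i.e.\ chambers $d$ with $d_J\in F(A,J,u)\cap F(B,J,v)$, each such $D$ being counted $N(J)$ times. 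This is precisely the identity $|\cC(F(A,J,u)\cap F(B,J,v))|=N(J)\,c_{u,v}^w(I,J,K)$ already extracted inside the proof of Theorem~\ref{thm:reg}.

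The substance of the proposition is therefore to reinterpret the left-hand side as the same quantity $|F(A,J,u)\cap F(B,J,v)|$, counted now without the factor $N(J)$. For this I would fix $d\in\cC$ and analyze the set $\{y\in M_K(J,v)v : d\in\cC_y(b')\text{ for some }b'\in\cC(B)\}$. By Lemma~\ref{lem:exp}, every element of $W_KvW_J$ is uniquely $x'vz'$ with $x'\in M_K(J,v)$ and $z'\in W_J$, and $\ell(x'vz')=\ell(x')+\ell(v)+\ell(z')$; so $M_K(J,v)v$ is a transversal of the cosets $(M_K(J,v)v)W_J$ inside $W_KvW_J$. The key geometric claim is: for $D=d_J\in F(B,J,v)$ there is exactly one chamber $d^{\ast}\in\cC(D)$ with $\delta(b^{\ast},d^{\ast})\in M_K(J,v)v$ for some $b^{\ast}\in\cC(B)$ — namely $d^{\ast}=\proj_D(B)$ composed appropriately, or rather the chamber realizing the minimal-length representative — and for that $d^{\ast}$ the value $\delta(a,d^{\ast})$ still lies in $W_IuW_J$ iff $D\in F(A,J,u)$. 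Thus the pairs counted by $\sum_{x\in W_IuW_J,\,y\in M_K(J,v)v}c_{x,y}^z$ are in bijection with $F(A,J,u)\cap F(B,J,v)$: each such $D$ contributes exactly one chamber $d^{\ast}$, hence exactly one to the sum.

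I expect the main obstacle to be the geometric claim in the previous paragraph: pinning down precisely why, among the $N(J)$ chambers above a $J$-simplex $D\in F(B,J,v)$, exactly one has its Weyl distance to (some chamber of) $B$ landing in $M_K(J,v)v$, and checking that this selection is compatible with the double coset $W_IuW_J$ on the $A$-side (i.e.\ that restricting to $M_K(J,v)v$ on the $B$-side does not interfere with the freedom on the $A$-side). The natural tool is the projection $\proj_B(D)$ together with property (P1): the chamber $\proj_{D}(b)$ for $b=\proj_B(\cdot)$ is the unique chamber of $\cC(D)$ at minimal gallery distance from $\cC(B)$, and one checks via Lemma~\ref{lem:exp} and equation~(\ref{eq:deltadist}) that $\delta(\cC(B),\,\proj_{D}(b))\cap M_K(J,v)v$ is a singleton. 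Once this is established, summing over $z\in W_IwW_K$ is handled exactly as in Theorem~\ref{thm:reg} — the count is independent of which representative $z$ is used — and the formula follows. A cleaner alternative I would also consider: argue directly that $\sum_{x\in W_IuW_J,\ y\in M_K(J,v)v}c_{x,y}^z$ equals $|\cC(F(A,J,u)\cap F(B,J,v))|/N(J)$ by partitioning $\cC(F(B,J,v))$ according to, for each chamber $d$, the unique $y\in M_K(J,v)v$ with $\delta(b,d)\in yW_J$, using uniqueness in Lemma~\ref{lem:exp}; this reduces the whole proposition to the observation that the fibre of $d\mapsto d_J$ over $D\in F(B,J,v)$ meets each $\cC_y(b)$, $y\in M_K(J,v)v$, in exactly one chamber, which is again a projection statement.
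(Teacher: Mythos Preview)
Your core approach matches the paper's: both establish that $d\mapsto d_J$ is a bijection from $\bigsqcup_{x\in W_IuW_J,\,y\in M_K(J,v)v}\cC_x(a)\cap\cC_y(b)$ onto $F(A,J,u)\cap F(B,J,v)$, with surjectivity and injectivity extracted from the uniqueness statement in Lemma~\ref{lem:exp}. The paper does this directly and independently of Theorem~\ref{thm:reg} (as it explicitly remarks), so your opening reduction to the $N(J)$-weighted formula is an unnecessary detour.

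Two places where your phrasing would derail the argument if taken literally. First, you repeatedly let the chamber of $B$ vary (``for some $b^{\ast}\in\cC(B)$'', ``for some $b'\in\cC(B)$''), but the numbers $c_{x,y}^z$ are computed with respect to the \emph{fixed} chamber $b$ with $\delta(a,b)=z$; the sum $\sum c_{x,y}^z$ is literally the cardinality of $\bigsqcup\cC_x(a)\cap\cC_y(b)$ for that fixed $b$, and allowing $b$ to move over $\cC(B)$ destroys the count. Second, in your ``cleaner alternative'' you claim the fibre $\cC(D)$ meets \emph{each} $\cC_y(b)$, $y\in M_K(J,v)v$, in exactly one chamber; what is true (and what the paper uses) is that it meets the \emph{union} $\bigcup_{y\in M_K(J,v)v}\cC_y(b)$ in exactly one chamber. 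Indeed, for fixed $b$ and $D$ one has $\delta(\{b\}\times\cC(D))=mvW_J$ for a single $m\in M_K(J,v)$, and since $\ell(mvz')=\ell(mv)+\ell(z')$ the unique $d\in\cC(D)$ with $\delta(b,d)=mv$ is $\proj_D(b)$, confirming your projection intuition --- though the paper gets by without invoking projections, arguing purely from Lemma~\ref{lem:exp}. Once you fix these two slips, your argument is the paper's.
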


\begin{proof} Choose $A\in X_I$ and $B\in F(A,K,w)$, and fix chambers $a\in\cC(A)$ and $b\in\cC(B)$ such that $\delta(a,b)=z$. We claim that the mapping $c\mapsto c_J$ gives a bijection
$$
\bigsqcup_{\substack{x\in W_IuW_J\\ y\in M_K(J,v)v}}\cC_x(a)\cap \cC_y(b)\to F(A,J,u)\cap F(B,J,v).
$$
To check surjectivity, let $D\in F(A,J,u)\cap F(B,J,v)$. Since $b\in\cC(B)$ and $\delta(\cC(B)\times\cC(D))=W_KvW_J=M_K(v,J)vW_J$ we have $\delta(\{b\}\times \cC(D))= mvW_J$ for some $m\in M_K(J,v)$. Thus there is a chamber $d\in\cC(D)$ with $\delta(b,d)=mv\in M_K(J,v)v$. Since $d\in\cC(D)$ and $D$ has cotype~$J$, we have $d_J=D$. Since $a\in\cC(A)$, $d\in\cC(D)$ and $\delta(A,D)=u$ we have $\delta(a,d)\in W_IuW_J$. 

To check injectivity, suppose that $d,d'\in\cC$ with $\delta(b,d)=mv\in M_K(J,v)v$ and $\delta(b,d')=m'v\in M_K(J,v)v$ and $d_J=d_J'$. Then $\delta(d,d')\in W_J$, and so $m'v=\delta(b,d')\in\delta(b,d)W_J$. Thus $m'v=mvw_J$ for some $w_J\in W_J$, and it follows from Lemma~\ref{lem:exp} that $m'=m$ and $w_J=1$. Thus $d=d'$. The formula now follows by taking cardinalities and using~(\ref{eq:chamberreg}). 
\end{proof}

We now develop a combinatorial formula for the intersection cardinalities $c_{u,v}^w(I,J,K)$ in terms of ``pointed pregalleries'' (see the definition in the following paragraph). These combinatorial objects are inspired by the alcove walk model developed by Ram~\cite{Ram:06}. For the readers familiar with this theory, our pointed pregalleries are analogues of positively folded alcove walks where instead of the `folding' occurring away from a point at infinity, instead the folding occurs away from a fixed chamber. Note that our pointed pregalleries are defined for all Coxeter systems, whereas positively folded alcove walks are only defined for affine Coxeter systems.

Let $\cA$ be an apartment of $X$ and let $a$ be a chamber of~$\cA$. An \textit{$a$-pointed pregallery} in $\cA$ of type $(s_1,\ldots,s_n)$ is a pregallery $\gamma=(c_0,s_1,c_1,s_2,\ldots,s_n,c_n)$ in $\cA$ such that 
$$
\text{if}\quad c_{j-1}=c_j\quad\text{then}\quad \delta(a,c_{j-1})s_j<\delta(a,c_{j-1}).$$ 
Our interest in $a$-pointed pregalleries stems from the following elementary fact. 

\begin{lemma}\label{lem:motivation}
Let $a,b\in\cC$ and let $\cA$ be an apartment with $a,b\in\cA$. If $\Gamma$ is a gallery in $X$ of type $\boldsymbol{v}$ starting at $b$ then $\rho_{\cA,a}(\Gamma)$ is an $a$-pointed pregallery in $\cA$ of type $\boldsymbol{v}$ starting at~$b$
\end{lemma}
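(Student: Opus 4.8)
The plan is to unwind the definitions and check the two defining conditions of an $a$-pointed pregallery directly, using properties (R1) and (R2) of the retraction $\rho_{\cA,a}$ together with the basic behaviour of Weyl distance under adjacency of chambers. Write $\Gamma=(d_0,d_1,\ldots,d_n)$ for the gallery in $X$ of type $\boldsymbol{v}=(s_1,\ldots,s_n)$ with $d_0=b$, and put $c_j=\rho_{\cA,a}(d_j)$, so that $\rho_{\cA,a}(\Gamma)=(c_0,s_1,c_1,\ldots,s_n,c_n)$ with $c_0=\rho_{\cA,a}(b)=b$ (since $b\in\cA$). By (R2) this sequence is already a pregallery in $\cA$ of type $\boldsymbol{v}$, so the only thing left to verify is the pointedness condition: whenever $c_{j-1}=c_j$ we must show $\delta(a,c_{j-1})s_j<\delta(a,c_{j-1})$.

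First I would record the elementary fact about galleries and Weyl distance: if $d_{j-1}\neq d_j$ and $d_{j-1}\cap d_j$ is an $s_j$-panel, then $\delta(a,d_j)\in\{\delta(a,d_{j-1}),\delta(a,d_{j-1})s_j\}$, and more precisely $\delta(a,d_j)=\delta(a,d_{j-1})s_j$ unless $d_j=\mathrm{proj}_{\pi}(a)$ for the panel $\pi=d_{j-1}\cap d_j$ — this is standard from \cite[Section~4.8]{ABr:08}. Now suppose $c_{j-1}=c_j$. Since $\Gamma$ is a \emph{gallery}, $d_{j-1}\neq d_j$, so $d_{j-1}\cap d_j$ is a genuine $s_j$-panel $\pi$. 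Applying (R1) we have $\delta(a,c_{j-1})=\delta(a,d_{j-1})$ and $\delta(a,c_j)=\delta(a,d_j)$, hence $\delta(a,d_{j-1})=\delta(a,d_j)$. By the dichotomy just recalled, the only way two distinct $s_j$-adjacent chambers $d_{j-1},d_j$ can have equal Weyl distance from $a$ is that this common value $w:=\delta(a,d_{j-1})=\delta(a,d_j)$ satisfies $\ell(ws_j)<\ell(w)$, i.e.\ $ws_j<w$; indeed if instead $ws_j>w$ then one of $d_{j-1},d_j$ would be at distance $w$ and the other at distance $ws_j\neq w$ from $a$. Therefore $\delta(a,c_{j-1})s_j=ws_j<w=\delta(a,c_{j-1})$, which is exactly the pointedness condition.

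Finally I would note that $\rho_{\cA,a}(\Gamma)$ starts at $c_0=b$ as observed above and has type $\boldsymbol{v}$ by (R2), completing the proof. The main (and only real) obstacle is pinning down the adjacency dichotomy for Weyl distance — that $s_j$-adjacent chambers $d_{j-1}\neq d_j$ have $\delta(a,\cdot)$-values that are either equal, in which case the common value $w$ has $ws_j<w$, or differ by right multiplication by $s_j$; this is a routine consequence of the characterisation of $\delta$ via reduced-type galleries, but it is the one step that genuinely requires invoking building axioms rather than just formal manipulation.
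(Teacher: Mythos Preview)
Your proof is correct and follows essentially the same approach as the paper's: both use (R2) to reduce to checking pointedness, then argue (via (R1) and the standard Weyl-distance axiom that $\ell(ws_j)>\ell(w)$ forces $\delta(a,d_j)=\delta(a,d_{j-1})s_j$ for $s_j$-adjacent chambers) that a stutter in the retracted pregallery implies $\delta(a,c_{j-1})s_j<\delta(a,c_{j-1})$. The paper phrases this as a direct contradiction rather than invoking the full adjacency dichotomy and projections, but the content is the same.
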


\begin{proof}
Write $\rho=\rho_{\cA,a}$. In view of (R2) in Section~\ref{sec:retproj} it suffices to show that $\rho(\Gamma)$ is $a$-pointed. Write $\Gamma=(b_0,s_1,\ldots,s_n,b_n)$ and suppose that $\rho(b_{j-1})=\rho(b_j)$. If $\delta(a,\rho(b_{j-1}))s_j>\delta(a,\rho(b_{j-1}))$ then $\delta(a,b_{j-1})s_j>\delta(a,b_{j-1})$ by (R1), and hence $\delta(a,b_j)=\delta(a,b_{j-1})s_j$. Thus by (R1) we have $\delta(a,\rho(b_j))=\delta(a,\rho(b_{j-1}))s_j$, contradicting $\rho(b_{j-1})=\rho(b_j)$. Thus $\delta(a,\rho(b_{j-1}))s_j<\delta(a,\rho(b_{j-1}))$ and so $\rho(\Gamma)$ is $a$-pointed.
\end{proof}

If $\gamma=(c_0,s_1,\ldots,s_n,c_n)$ is an $a$-pointed pregallery in an apartment~$\cA$, and $s\in S$ then we define
\begin{align*}
\alpha_s(\gamma)&=|\{j\mid s_j=s\textrm{ and }\delta(a,c_{j-1})<\delta(a,c_{j})\}|\\
\sigma_s(\gamma)&=|\{j\mid s_j=s\textrm{ and }c_{j}=c_{j-1}\}|.
\end{align*}
(Intuitively, $\alpha_s(\gamma)$ counts the number of ``ascent steps'' in the pointed pregallery that occur on cotype~$s$ panels, where ascent step means that the gallery increases length from~$a$, and $\sigma_s(\gamma)$ counts the number of stutters in the gallery that occur on cotype~$s$ panels). 

If $\gamma$ is an $a$-pointed pregallery we define
$$
q(\gamma)=\prod_{s\in S}q_s^{\alpha_s(\gamma)}(q_s-1)^{\sigma_s(\gamma)}.
$$

\begin{prop}\label{prop:inverse} Let $X$ be a thick regular building of type $W$ with parameters $(q_s)_{s\in S}$. Let $a,b\in\cC$ with $\delta(a,b)=w$ and let $\cA$ be an apartment containing $a$ and $b$. Let $\boldsymbol{v}=(s_1,\ldots,s_n)\in S^n$ and define
\begin{align*}
\cG(b,\boldsymbol{v})&=\{\textrm{galleries in $X$ of type $\boldsymbol{v}$ starting at $b$}\},\\
\cP_{\cA,a}(b,\boldsymbol{v})&=\{\textrm{$a$-pointed pregalleries in $\cA$ of type $\boldsymbol{v}$ starting at $b$}\}.
\end{align*}
Then 
\begin{enumerate}
\item $\rho_{\cA,a}(\cG(b,\boldsymbol{v}))=\cP_{\cA,a}(b,\boldsymbol{v})$, and
\item $|\rho_{\cA,a}^{-1}(\gamma)|=q(\gamma)$ for all $\gamma\in \cP_{\cA,a}(b,\boldsymbol{v})$ 
\end{enumerate}
\end{prop}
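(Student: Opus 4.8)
The plan is to induct on the length $n$ of the type word $\boldsymbol{v}=(s_1,\ldots,s_n)$, peeling off the last letter $s_n$. Write $\rho=\rho_{\cA,a}$ and $\boldsymbol{v}'=(s_1,\ldots,s_{n-1})$. A gallery $\Gamma\in\cG(b,\boldsymbol{v})$ is a gallery $\Gamma'\in\cG(b,\boldsymbol{v}')$ ending at some chamber $e$, together with a chamber $e'$ with $e\cap e'$ an $s_n$-panel and $e'\neq e$; similarly a pointed pregallery $\gamma\in\cP_{\cA,a}(b,\boldsymbol{v})$ extends a $\gamma'\in\cP_{\cA,a}(b,\boldsymbol{v}')$ ending at some $f\in\cA$ by a step $(s_n,f')$ which is either a genuine move across the $s_n$-panel of $f$ (the unique such move inside $\cA$) or a stutter $f'=f$, the latter being allowed only when $\delta(a,f)s_n<\delta(a,f)$. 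For part~(1), Lemma~\ref{lem:motivation} already gives $\rho(\cG(b,\boldsymbol{v}))\subseteq\cP_{\cA,a}(b,\boldsymbol{v})$; for the reverse inclusion one lifts step by step, which is essentially the content of part~(2), so I would prove (1) and (2) together.

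For the inductive step fix $\gamma\in\cP_{\cA,a}(b,\boldsymbol{v})$ with truncation $\gamma'$ ending at $f$, and let $\pi$ be the $s_n$-panel of $f$. Given a lift $\Gamma'\in\rho^{-1}(\gamma')$ ending at a chamber $e$ with $\rho(e)=f$, the chambers $e'$ with $e\cap e'$ an $s_n$-panel are exactly the $q_{s_n}+1$ chambers of $\cC(\pi_e)$ other than... rather, the $q_{s_n}$ chambers $e'\neq e$ in $\cC(\pi_e)$, where $\pi_e=e_{S\setminus\{s_n\}}$; these get partitioned by $\rho$ according to the chamber of $\cA$ they map to. The key building-theoretic fact I need is: $\rho$ restricted to $\cC(\pi_e)$ is surjective onto $\cC(\pi)$, it maps exactly one chamber of $\cC(\pi_e)$ to $f$ (and that chamber is $e$ precisely when $\delta(a,f)s_n<\delta(a,f)$, equivalently when $\rho(e)$'s projection data forces the retraction to "fold back"), one chamber to the unique $\cA$-chamber $g\neq f$ on $\pi$, and these two fibres have sizes summing appropriately. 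Concretely: if $\delta(a,f)s_n<\delta(a,f)$ then among the $q_{s_n}$ chambers $e'\neq e$ on $\pi_e$, exactly one maps to $g$ and the other $q_{s_n}-1$ map to $f$ (giving stutter lifts); if $\delta(a,f)s_n>\delta(a,f)$ then $e$ itself maps to $f$ and all $q_{s_n}$ chambers $e'\neq e$ map to $g$, with no stutter allowed. This is exactly the standard behaviour of retractions on a single panel, deducible from the defining property of $\rho_{\cA,a}$ together with (R1): the chamber of $\pi_e$ closest to $a$ is the one mapping to the chamber of $\pi$ closest to $a$.

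Putting this together: if the $n$-th step of $\gamma$ is a genuine move ($f'=g\neq f$), then every lift $\Gamma'$ of $\gamma'$ extends to lifts of $\gamma$ in exactly $q_{s_n}$ ways when $\delta(a,f)s_n>\delta(a,f)$ — wait, in that case $\alpha_{s_n}$ increments — and the bookkeeping is: a genuine ascent step contributes a factor $q_{s_n}$, a genuine step that is a descent contributes a factor $1$, and a stutter contributes a factor $q_{s_n}-1$. Matching these against the definition of $q(\gamma)=\prod_s q_s^{\alpha_s(\gamma)}(q_s-1)^{\sigma_s(\gamma)}$ — noting that descent steps (which are necessarily genuine moves, since stutters force a descent but are counted separately) contribute nothing — yields $|\rho^{-1}(\gamma)|=q(\gamma)$ by the induction hypothesis $|\rho^{-1}(\gamma')|=q(\gamma')$; surjectivity of each extension step simultaneously gives $\rho(\cG(b,\boldsymbol{v}))=\cP_{\cA,a}(b,\boldsymbol{v})$, completing (1). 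The main obstacle is the careful case analysis of how $\rho$ distributes the $q_{s_n}+1$ chambers on a panel over the (at most two) $\cA$-chambers on its image panel, and in particular nailing down that the "one chamber of $\pi_e$ sent to $f$" coincides with $e$ exactly in the descent case; this is where (R1) and the definition of the retraction must be used with precision, and where an induction on gallery distance to $a$ (or an appeal to $\dist(\proj_\pi(a),a)$) settles which chamber is "closest to $a$."
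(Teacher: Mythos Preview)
Your plan is correct and follows essentially the same route as the paper: induction on $n$ peeling off the last letter, Lemma~\ref{lem:motivation} for one inclusion, and the three-case panel analysis (ascent $\leadsto q_{s_n}$, genuine descent $\leadsto 1$, stutter $\leadsto q_{s_n}-1$) using (R1) and the projection $\proj_{\pi_e}(a)$ to identify which chamber of $\pi_e$ retracts to the $\cA$-chamber closer to $a$. One small slip: your parenthetical ``it maps exactly one chamber of $\cC(\pi_e)$ to $f$'' is only true in the ascent case (in the descent case $q_{s_n}$ chambers map to $f$), but your ``Concretely'' clause immediately afterwards gets the counts right, so the argument goes through.
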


\begin{proof} 
Write $\rho=\rho_{\cA,a}$. From Lemma~\ref{lem:motivation} we have $\rho(\cG(b,\boldsymbol{v}))\subseteq\cP_{\cA,a}(b,\boldsymbol{v})$. We prove the remaining statements by induction on~$n$. Thus suppose that the result is true for $\boldsymbol{v}'=(s_1,\ldots,s_n)$ and let $\boldsymbol{v}=(s_1,\ldots,s_n,t)$. Let
$$
\gamma=(c_0,s_1,c_1,\ldots,s_n,c_n,t,c_{n+1})\in\cP_{\cA,a}(b,\boldsymbol{v})
$$
(and so $c_0=b$) and let $\gamma'=(c_0,s_1,c_1,\ldots,s_n,c_n)\in \cP_{\cA,a}(b,\boldsymbol{v}')$. By the induction hypothesis there are $q(\gamma')$ galleries $\Gamma'=(b_0,s_1,\ldots,s_n,b_n)\in\cG(b,\boldsymbol{v}')$ with $\rho(\Gamma')=\gamma'$ (that is, $\rho(b_j)=c_j$ for all $0\leq j\leq n$). For each such gallery $\Gamma'$ the number of ways of extending $\Gamma'$ to a gallery $\Gamma=(b_0,s_1,\ldots,s_n,b_n,t,b_{n+1})$ with $\rho(\Gamma)=\gamma$ is equal to the number of chambers $b_{n+1}$ such that $\delta(b_n,b_{n+1})=t$ and $\rho(b_{n+1})=c_{n+1}$. 

\noindent\textbf{Case 1:} Suppose that $\delta(a,c_n)t>\delta(a,c_n)$. Thus $c_{n+1}\neq c_n$ (because $\gamma$ is $a$-pointed) and hence $\delta(a,c_{n+1})=\delta(a,c_n)t$. Since $c_n=\rho(b_n)$ we have $\delta(a,c_n)=\delta(a,b_n)$, and so $\delta(a,b_n)t>\delta(a,b_n)$. This implies that $\delta(a,b_{n+1})=\delta(a,b_n)t$ for all $b_{n+1}$ with $\delta(b_n,b_{n+1})=t$, and therefore $\delta(a,\rho(b_{n+1}))=\delta(a,c_n)t=\delta(a,c_{n+1})$ and so $\rho(b_{n+1})=c_{n+1}$. Thus there are $q_t$ extensions $\Gamma$ of $\Gamma'$ so that $\rho(\Gamma)=\gamma$, and so 
$$
|\rho^{-1}(\gamma)|=|\rho^{-1}(\gamma')|q_t=q(\gamma')q_t.
$$
We have $\alpha_t(\gamma)=\alpha_t(\gamma')+1$, $\alpha_s(\gamma)=\alpha_s(\gamma')$ for all $s\neq t$, and $\sigma_s(\gamma)=\sigma_s(\gamma')$ for all $s\in S$, and so $q(\gamma')q_t=q(\gamma)$ completing the induction in this case.

\noindent\textbf{Case 2:} Suppose that $\delta(a,c_n)t<\delta(a,c_n)$ and that $c_n\neq c_{n+1}$, and so $\delta(a,c_{n+1})=\delta(a,c_n)t$. Since $\rho(b_n)=c_n$ we have $\delta(a,b_n)t<\delta(a,b_n)$. Therefore $\rho(b_{n+1})=c_{n+1}$ if and only if $\delta(a,b_{n+1})=\delta(a,c_n)t=\delta(a,b_n)t$, and this occurs for $b_{n+1}\in \cC_t(b_n)$ if and only if $b_{n+1}=\proj_{\pi}(a)$ where $\pi$ is the cotype~$t$ panel of~$b_n$. Thus there is a unique extension $\Gamma$ of $\Gamma'$ so that $\rho(\Gamma)=\gamma$, and so 
$$
|\rho^{-1}(\gamma)|=|\rho^{-1}(\gamma')|=q(\gamma').
$$
Since $\alpha_s(\gamma)=\alpha_s(\gamma')$ and $\sigma_s(\gamma)=\sigma_s(\gamma')$ for all $s\in S$ we have $q(\gamma')=q(\gamma)$. 

\noindent\textbf{Case 3:} Suppose that $\delta(a,c_n)t<\delta(a,c_n)$ and that $c_n= c_{n+1}$. In this case $\rho(b_{n+1})=c_{n+1}$ if and only if $\delta(a,b_{n+1})=\delta(a,c_{n+1})=\delta(a,c_n)=\delta(a,b_n)$, and this occurs for $b_{n+1}\in\cC_t(b_n)$ if and only if $b_{n+1}\neq \proj_{\pi}(b_n)$. Thus there are precisely $q_t-1$ extensions $\Gamma$ of $\Gamma'$ so that $\rho(\Gamma)=\gamma$, and so 
$$
|\rho^{-1}(\gamma)|=|\rho^{-1}(\gamma')|(q_t-1)=q(\gamma')(q_t-1).
$$
We have $\alpha_s(\gamma)=\alpha_s(\gamma')$ for all $s\in S$, $\sigma_s(\gamma)=\sigma_s(\gamma')$ for all $s\neq t$, and $\sigma_t(\gamma)=\sigma_t(\gamma')+1$, and hence $q(\gamma)=q(\gamma')(q_t-1)$ completing the proof.
\end{proof}

A $1$-pointed pregallery in the Coxeter complex~$\Sigma(W,S)$ is simply called a \textit{pointed pregallery}. Thus a pointed pregallery of type $(s_1,\ldots,s_n)$ is a sequence
$$
\gamma=(w_0,s_1,w_1,s_2,\ldots,s_n,w_n)
$$
such that 
\begin{enumerate}
\item $w_0,\ldots,w_n\in W$ with $w_j\in\{w_{j-1},w_{j-1}s_j\}$ for each $j=1,\ldots,n$, and 
\item if $w_{j-1}=w_j$ then $\ell(w_{j-1}s_j)<\ell(w_{j-1})$.
\end{enumerate} 
If $w\in W$ and $\boldsymbol{v}=(s_1,\ldots,s_n)$ let $\cP(w,\boldsymbol{v})$ be the set of all pointed pregalleries in $\Sigma(W,S)$ of type~$\boldsymbol{v}$ starting at~$w$, and for each $u\in W$ let
$$
\cP(w,\boldsymbol{v})_u=\{\gamma\in\cP(w,\boldsymbol{v})\mid \e(\gamma)=u\}.
$$

\begin{thm}\label{thm:formula1}
Let $u,v,w\in W$ and let $\boldsymbol{v}=(s_1,s_2,\ldots,s_n)$ be a reduced expression for $v$. Then
$$
c_{u,v}^w=\sum_{\gamma\in\cP(w,\boldsymbol{v})_u}q(\gamma).
$$
\end{thm}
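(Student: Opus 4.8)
The plan is to express $c_{u,v}^{w}$ as a count of chambers, to replace those chambers by galleries, and then to push the galleries through a retraction so that Proposition~\ref{prop:inverse} performs the enumeration. Concretely, fix $a\in\cC$, choose $b\in\cC_{w}(a)$, and pick an apartment $\cA$ containing both $a$ and $b$; then $c_{u,v}^{w}=|\cC_{u}(a)\cap\cC_{v}(b)|$ by~(\ref{eq:chamberreg}). The key preliminary observation is that, because $\boldsymbol{v}=(s_{1},\ldots,s_{n})$ is a \emph{reduced} word for $v$, the end-chamber map $\Gamma\mapsto\e(\Gamma)$ is a bijection $\cG(b,\boldsymbol{v})\to\cC_{v}(b)$: by the definition of the Weyl distance every $\Gamma\in\cG(b,\boldsymbol{v})$ ends at a chamber $d$ with $\delta(b,d)=s_{1}\cdots s_{n}=v$; there are exactly $q_{s_{1}}\cdots q_{s_{n}}=q_{v}$ galleries in $\cG(b,\boldsymbol{v})$, since at step $j$ there are $q_{s_{j}}$ choices of chamber across the (unique) $s_{j}$-panel of the current chamber; and $|\cC_{v}(b)|=q_{v}$ by~(\ref{eq:c}); a surjection between finite sets of equal size is a bijection. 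Under it the chambers of $\cC_{u}(a)\cap\cC_{v}(b)$ correspond to the galleries $\Gamma\in\cG(b,\boldsymbol{v})$ with $\delta(a,\e(\Gamma))=u$, so
$$
c_{u,v}^{w}=\bigl|\{\Gamma\in\cG(b,\boldsymbol{v})\mid\delta(a,\e(\Gamma))=u\}\bigr|.
$$

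Next I would apply the retraction $\rho=\rho_{\cA,a}$. By Proposition~\ref{prop:inverse}, $\rho$ maps $\cG(b,\boldsymbol{v})$ onto $\cP_{\cA,a}(b,\boldsymbol{v})$ and each fibre $\rho^{-1}(\gamma)$ has cardinality $q(\gamma)$. Since $\rho$ is simplicial we have $\rho(\e(\Gamma))=\e(\rho(\Gamma))$, whence (R1) gives $\delta(a,\e(\Gamma))=\delta(a,\e(\rho(\Gamma)))$; thus $\delta(a,\e(\Gamma))$ is constant on fibres of $\rho$. Partitioning the set in the last display over those fibres,
$$
c_{u,v}^{w}=\sum_{\substack{\gamma\in\cP_{\cA,a}(b,\boldsymbol{v})\\ \delta(a,\e(\gamma))=u}}q(\gamma).
$$

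Finally I would recognise this as the claimed sum over pointed pregalleries in $\Sigma(W,S)$. Identify $\cA$ with $\Sigma(W,S)$ by the type-preserving isomorphism sending $a$ to the chamber $1$ --- equivalently, sending a chamber $c$ of $\cA$ to $\delta(a,c)\in W$. Under it an $a$-pointed pregallery $(c_{0},s_{1},\ldots,s_{n},c_{n})$ in $\cA$ becomes the sequence $(w_{0},s_{1},\ldots,s_{n},w_{n})$ with $w_{j}=\delta(a,c_{j})$; two chambers of $\cA$ share an $s_{j}$-panel exactly when the corresponding elements of $W$ differ on the right by $s_{j}$, so $w_{j}\in\{w_{j-1},w_{j-1}s_{j}\}$, and the ``$a$-pointed'' requirement becomes precisely condition~(2) in the definition of a pointed pregallery, while the start $b$ corresponds to $\delta(a,b)=w$. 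Thus we get a bijection $\cP_{\cA,a}(b,\boldsymbol{v})\to\cP(w,\boldsymbol{v})$ carrying $\{\gamma\mid\delta(a,\e(\gamma))=u\}$ onto $\cP(w,\boldsymbol{v})_{u}$, and since $\alpha_{s}(\gamma)$, $\sigma_{s}(\gamma)$, and hence $q(\gamma)$, depend only on the data $(w_{0},\ldots,w_{n})$ and $(s_{1},\ldots,s_{n})$, the weights match. Substituting yields $c_{u,v}^{w}=\sum_{\gamma\in\cP(w,\boldsymbol{v})_{u}}q(\gamma)$.

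I do not anticipate a serious obstacle: the substantive content --- that the retraction fibre over a pointed pregallery has size $q(\gamma)$ --- is exactly Proposition~\ref{prop:inverse}. The steps that need care are the gallery/chamber bijection above (where reducedness of $\boldsymbol{v}$ is essential) and checking that the ``$a$-pointed'' condition and the statistics $\alpha_{s},\sigma_{s}$ transfer faithfully under the based identification $\cA\cong\Sigma(W,S)$.
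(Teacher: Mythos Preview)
Your proof is correct and follows essentially the same approach as the paper's: fix chambers $a,b$ with $\delta(a,b)=w$ in an apartment $\cA$, use reducedness of $\boldsymbol{v}$ to identify $\cC_v(b)$ with $\cG(b,\boldsymbol{v})$, apply Proposition~\ref{prop:inverse} to count fibres of the retraction $\rho_{\cA,a}$, and then translate to $\Sigma(W,S)$ via the based identification $a\leftrightarrow 1$. Your write-up is in fact more explicit than the paper's in justifying the gallery/chamber bijection and in verifying that the statistics $\alpha_s,\sigma_s$ (hence $q(\gamma)$) transfer under the identification $\cA\cong\Sigma(W,S)$.
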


\begin{proof}
Let $a,b\in\cC(X)$ be chambers of $X$ with $\delta(a,b)=w$. Let $\cA$ be an apartment of $X$ containing~$a$ and~$b$, and let $\rho=\rho_{\cA,a}$. Since $\boldsymbol{v}$ is reduced the map $\cG(b,\boldsymbol{v})\to \cC_v(b)$, $\Gamma\mapsto\e(\Gamma)$ is a bijection. Thus if $c$ is the unique chamber of $\cA$ with $\delta(a,c)=u$ we have
\begin{align*}
|\cC_u(a)\cap \cC_v(b)|&=|\{d\in\cC_v(b)\mid \rho(d)=c\}|=|\{\Gamma\in\cG(b,\boldsymbol{v})\mid \rho(\e(\Gamma))=c\}|.
\end{align*}
Thus by Proposition~\ref{prop:inverse} we have
\begin{align*}
|\cC_u(a)\cap \cC_v(b)|&=|\{\Gamma\in\rho^{-1}(\gamma)\mid \gamma\in \cP_{\cA,a}(b,\boldsymbol{v}),\,\e(\gamma)=c\}|=\sum_{\gamma\in\cP_{\cA,a}(b,\boldsymbol{v})_c}q(\gamma)
\end{align*}
where $\cP_{\cA,a}(b,\boldsymbol{v})_c=\{\gamma\in\cP_{\cA,a}(b,\boldsymbol{v})\mid\e(\gamma)=c\}$. Since $\cA\cong\Sigma(W,S)$ and $\delta(a,b)=w$ we may identify $\cA$ with $\Sigma(W,S)$ such that $a\leftrightarrow 1$ and $b\leftrightarrow w$. Under this identification, $\cP_{\cA,a}(b,\boldsymbol{v})_c$ becomes $\cP(w,\boldsymbol{v})_u$, hence the result. 
\end{proof}

In the following theorem (the main theorem of this section) we present our combinatorial formula for the intersection numbers $c_{u,v}^w(I,J,K)$.

\begin{thm}\label{thm:pathsv1} Let $I,J,K$ be spherical subsets of $S$, and suppose that $u\in R(I,J)$, $v\in R(K,J)$, and $w\in R(I,K)$. Then
$$
c_{u,v}^w(I,J,K)=\sum_{\gamma}q(\gamma)
$$
where the sum is over
$$
\gamma\in\left\{\begin{matrix}\textrm{pointed pregalleries $\gamma$ of type $\boldsymbol{m}\boldsymbol{v}$ in $\Sigma(W,S)$}\\
\textrm{starting at $w$ and with $\e(\gamma)\in W_IuW_J$}\end{matrix}\,\,\,\bigg|\,\,\,m\in M_K(J,v)\right\},
$$
where $\boldsymbol{v}$ is a fixed reduced expression for $v$, and for each $m\in M_K(J,v)$ we fix a reduced expression~$\boldsymbol{m}$. 
\end{thm}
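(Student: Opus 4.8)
The plan is to obtain Theorem~\ref{thm:pathsv1} by composing three results already in hand: Proposition~\ref{prop:refinement}, Lemma~\ref{lem:exp}, and Theorem~\ref{thm:formula1}. No new building-theoretic construction is needed; the statement is essentially their synthesis.

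First I would apply Proposition~\ref{prop:refinement} with the particular choice $z=w$, which is legitimate since $w\in R(I,K)\subseteq W_IwW_K$. This yields
$$
c_{u,v}^w(I,J,K)=\sum_{\substack{x\in W_IuW_J\\ y\in M_K(J,v)v}}c_{x,y}^w ,
$$
so the problem is reduced to the chamber intersection numbers $c_{x,y}^w$ of~(\ref{eq:chamberreg}). Every $y$ in this index set is uniquely of the form $y=mv$ with $m\in M_K(J,v)$, so I would re-index the inner sum over $m$.

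The one step that requires a genuine (short) argument is that the word $\boldsymbol m\boldsymbol v$ is reduced; this is precisely what makes Theorem~\ref{thm:formula1} applicable. It follows from Lemma~\ref{lem:exp}: applied with the pair $(K,v)$ in place of $(I,w)$ and the identity element of $W_J$, it gives $\ell(mv)=\ell(m)+\ell(v)$, so the concatenation of the fixed reduced expression $\boldsymbol m$ for $m$ with the fixed reduced expression $\boldsymbol v$ for $v$ is a reduced expression for $mv$. With this in hand, Theorem~\ref{thm:formula1} applies to each term and gives
$$
c_{x,mv}^w=\sum_{\gamma\in\cP(w,\boldsymbol m\boldsymbol v)_x}q(\gamma).
$$

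Finally I would substitute this back and rearrange the summations. Since $\cP(w,\boldsymbol m\boldsymbol v)_x=\{\gamma\in\cP(w,\boldsymbol m\boldsymbol v)\mid\e(\gamma)=x\}$, summing first over $x\in W_IuW_J$ and then over the pregalleries ending at $x$ is the same as summing over all $\gamma\in\cP(w,\boldsymbol m\boldsymbol v)$ with $\e(\gamma)\in W_IuW_J$; taking the disjoint union over $m\in M_K(J,v)$ then produces exactly the claimed formula
$$
c_{u,v}^w(I,J,K)=\sum_{m\in M_K(J,v)}\;\sum_{\substack{\gamma\in\cP(w,\boldsymbol m\boldsymbol v)\\ \e(\gamma)\in W_IuW_J}}q(\gamma).
$$
I expect no deep obstacle here: the only points needing care are verifying that $\boldsymbol m\boldsymbol v$ is reduced (as above) and checking that there is no overcounting across distinct $m$ — which holds because distinct $m\in M_K(J,v)$ yield distinct group elements $mv$, and the pregalleries are organised according to their formal type $\boldsymbol m\boldsymbol v$. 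The substantive work has already been carried out in the preparatory results, so the proof amounts to carefully threading them together.
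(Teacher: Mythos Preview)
Your proposal is correct and follows exactly the paper's approach: the paper's proof is the single sentence ``This follows from Proposition~\ref{prop:refinement} and Theorem~\ref{thm:formula1},'' and you have simply unpacked that sentence, including the verification via Lemma~\ref{lem:exp} that $\boldsymbol m\boldsymbol v$ is reduced.
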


\begin{proof} This follows from Proposition~\ref{prop:refinement} and Theorem~\ref{thm:formula1}.
\end{proof}

\begin{example}\label{ex:F41}
Let $(W,S)$ be an $F_4$ Coxeter system with $S=\{1,2,3,4\}$ in standard Bourbaki labelling and fix $I=\{2,3,4\}$. Consider an $F_4$ building with parameters $q_1=q_2=s$ and $q_3=q_4=t$. The minimal length double coset representatives for $W_I\backslash W/W_I$ are
$$
w_0=e,\quad w_1=1,\quad w_2=12321,\quad w_3=12324321,\quad w_4=123423121324321
$$
where $e$ is the identity element of~$W$. In this example we apply Theorem~\ref{thm:pathsv1} to compute the intersection cardinalities $c_{w_i,w_1}^{w_3}(I,I,I)$ for each $i=0,1,2,3,4$. We have $W_I(I,w_1)=W_{\{3,4\}}$, and $M_I(I,w_1)$ consists of the~$8$ elements
$$
m_0=e,\, m_1=2,\, m_2=32,\, m_3=432,\, m_4=232,\, m_5=2432,\, m_6=32432,\, m_7=232432.
$$
To apply Theorem~\ref{thm:pathsv1} we must find all pointed pregalleries of type $m_kw_1$ (with $0\leq k\leq 7$) starting at $w_3$, and compute $q(\gamma)$ and $\mathrm{end}(\gamma)$ for each such pointed pregallery.

To abbreviate notation, we write a pointed pregallery $\gamma$ of type $i_1\cdots i_n$ starting at $w$ as $\gamma=w\cdot \vec{i}_1\vec{i}_2\cev{i}_3\widehat{i}_4\cdots\cev{i}_n$ (for example), where the symbol $\vec{i}$ indicates that the length increases in that step of the pregallery, $\cev{i}$ indicates length decreases in that step, and $\widehat{i}$ indicates an $i$-stutter in the pregallery. Then $\mathrm{end}(\gamma)$ is simply the word obtained by deleting all terms~$\widehat{i}$.

Note that $\ell(w_jm_k)=\ell(w_j)+\ell(m_k)$ for all $j,k$, and so the first $\ell(m_k)$ steps of a pointed pregallery of type $m_kw_1$ starting at $w_j$ are necessarily $\vec{i}$ steps. Since $\ell(w_31)<\ell(w_3)$ we have two pointed pregalleries of type $m_0w_1$ starting at $w_3$, namely $\gamma_0=w_3\cdot \cev{1}$ and $\gamma_0'=w_3\cdot\widehat{1}$, and $q(\gamma_0)=1$ and $q(\gamma_0')=s-1$. Similarly, for each $k=1,2,3$ we have $\ell(w_3m_k1)<\ell(w_3m_k)$, and so for each of these values of $k$ there are two pointed pregalleries $\gamma_k=w_3\cdot\vec{m}_k\cev{1}$ and $\gamma_k'=w_3\cdot\vec{m}_k\widehat{1}$. We have $q(\gamma_1)=s$, $q(\gamma_2)=st$, $q(\gamma_3)=st^2$, $q(\gamma_1')=(s-1)s$, $q(\gamma_2')=(s-1)st$, and $q(\gamma_3')=(s-1)st^2$.  

On the other hand, for $k=4,5,6,7$ we have $\ell(w_3m_k1)>\ell(w_3m_k)$, and so for these values of $k$ there is only 1 pointed pregallery of type $m_kw_1$ starting at $w_3$, given by $\gamma_k=w_3\cdot\vec{m}_k\vec{1}$. We have $q(\gamma_4)=s^3t$, $q(\gamma_5)=s^3t^2$, $q(\gamma_6)=s^3t^3$, and $q(\gamma_7)=s^4t^3$. 

We see that $\mathrm{end}(\gamma_k')\in W_Iw_3W_I$ for all $k=0,1,2,3$, and $\mathrm{end}(\gamma_0)\in W_Iw_1W_I$, and $\mathrm{end}(\gamma_k)\in W_Iw_2W_I$ for $k=1,2,3$, and $\mathrm{end}(\gamma_k)\in W_Iw_3W_I$ for $k=4,5,6$, and $\mathrm{end}(\gamma_7)\in W_Iw_4W_I$. For example, consider $\gamma_2$. Performing Coxeter moves gives
$\mathrm{end}(\gamma_2)=w_3m_21=12324321321=4(12321)432\in W_Iw_2W_I$.

Thus Theorem~\ref{thm:pathsv1} gives $c_{w_0,w_1}^{w_3}(I,I,I)=0$, $c_{w_1,w_1}^{w_3}(I,I,I)=1$, and
\begin{align*}
c_{w_1,w_1}^{w_3}(I,I,I)&=1,&c_{w_2,w_1}^{w_3}(I,I,I)&=s(t^2+t+1),\\
c_{w_3,w_1}^{w_3}(I,I,I)&=(s-1)(st^2+st+s+1)+s^3t(t^2+t+1),&c_{w_4,w_1}^{w_3}(I,I,I)&=s^4t^3.
\end{align*}
\end{example}

\begin{example}\label{ex:E82}
The formula in Theorem~\ref{thm:pathsv1} is easily implemented in the MAGMA computational algebra system~\cite{MAGMA}, making use of the existing Coxeter group package. For example, consider an $E_8$ Coxeter system with $S=\{1,\ldots,8\}$ in standard Bourbaki labelling and fix $I=S\backslash\{2\}$. Consider an $E_8$ building with parameter~$q$. There are $35$ distinct $W_I\backslash W/W_I$ double cosets. Let $w_i$, with $0\leq i\leq 34$, denote the minimal length representatives of these double cosets. Fixing an order, we take $w_0=e$, $w_1=2$, $w_2=243542$, $w_3=24315436542$, $w_4=243542654376542$, $w_5=2431542654376542$, and $w_6=2435426543176543876542$. Write $C_{i,j}^k=c_{w_i,w_j}^{w_k}(I,I,I)$. Implementing Theorem~\ref{thm:pathsv1} into MAGMA we obtain
\begin{align*}
C_{2,1}^{1}&=\phi_2(q^2)\phi_3(q)\phi_5(q)q^5&
C_{2,1}^{2}&=\phi_2(q^2)(\phi_3(q)^2q^3-1)&
C_{2,1}^{3}&=\phi_3(q)^2q\\
C_{2,1}^{4}&=\phi_3(q^2)\phi_5(q)&
C_{2,1}^{5}&=\phi_3(q)&
C_{2,1}^{6}&=1,
\end{align*}
where $\phi_n(x)=x^{n-1}+\cdots+x+1$ and $C_{2,1}^{k}=0$ for $k=0$ and $7\leq k\leq 34$.
\end{example}

\section{Hecke operators on regular buildings}\label{sec:4}

If $\Gamma$ is a distance regular graph with intersection numbers $a_{k,\ell}^m$ then, as discussed in the introduction, the $k$-adjacency operators
$$
A_kf(x)=\sum_{y\in F_k(x)}f(y)\quad\textrm{for $f:V\to\mathbb{C}$}
$$
satisfy linear relations
$$
A_kA_{\ell}=\sum_{m\in\mathbb{N}}a_{k,\ell}^mA_m\quad\textrm{for all $k,\ell\in \mathbb{N}$}.
$$
Thus the vector space over $\mathbb{C}$ with basis $\{A_k\mid k\in\mathbb{N}\}$ is an algebra~$\scA$.

This brings us to an investigation of generalised adjacency operators on buildings, one of our primary motivations for studying intersection cardinalities of generalised spheres in regular buildings. For each pair $I,J$ of spherical subsets, and each $w\in R(I,J)$, define the \textit{$(I,J,w)$-adjacency operator} $T_w^{IJ}:\{f:X_J\to\CC\}\to\{f:X_I\to\CC\}$ by
$$
(T_w^{IJ}f)(A)=\sum_{B\in F(A,J,w)}f(B),\qquad\textrm{for all $A\in X_I$ and $f:X_J\to\CC$}.
$$
We also refer to these operators as \textit{Hecke operators} on the building.

\begin{prop}\label{prop:linear} Let $I,J$ and $K$ be spherical, and suppose that $u\in R(I,J)$ and $v\in R(K,J)$. Then
$$
T_u^{IJ}T_v^{JK}=\sum_{w\in R(I,K)} c_{u,v^{-1}}^{w}(I,J,K)T_w^{IK},
$$
where for fixed $u$ and $v$, $c_{u,v^{-1}}^w(I,J,K)$ is nonzero for only finitely many $w\in R(I,K)$. 
\end{prop}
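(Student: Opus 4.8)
The plan is to compute $(T_u^{IJ}T_v^{JK}f)(A)$ directly for $A\in X_I$ and $f:X_K\to\CC$, and to recognise the resulting coefficients as intersection numbers. First I would unwind the definitions:
$$
(T_u^{IJ}T_v^{JK}f)(A)=\sum_{D\in F(A,J,u)}(T_v^{JK}f)(D)=\sum_{D\in F(A,J,u)}\;\sum_{C\in F(D,K,v)}f(C).
$$
Switching the order of summation, this equals $\sum_{C\in X_K}N_{A,C}\,f(C)$, where $N_{A,C}=|\{D\in X_J\mid \delta(A,D)=u\text{ and }\delta(D,C)=v\}|$. The set being counted is $F(A,J,u)\cap F(C,J,v')$ where $v'$ is the $(K,J)$-reduced element with $\delta(C,D)=v'$ whenever $\delta(D,C)=v$; since $\delta(D,C)=\delta(C,D)^{-1}$ at the level of chambers and the $(J,K)$-reduced representative of a double coset inverts to the $(K,J)$-reduced representative of the inverse double coset, we get $v'=v^{-1}$ (note $v\in R(K,J)$ so $v^{-1}\in R(J,K)$, matching the distance $\delta(D,C)$ for $D\in X_J$, $C\in X_K$). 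Thus $N_{A,C}=|F(A,J,u)\cap F(C,J,v^{-1})|$.

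Next I would invoke Theorem~\ref{thm:reg}: writing $w=\delta(A,C)\in R(I,K)$, we have $C\in F(A,K,w)$, and so $N_{A,C}=c_{u,v^{-1}}^{w}(I,J,K)$ depends only on $u$, $v^{-1}$, and $w$ — in particular only on which sphere $F(A,K,w)$ the simplex $C$ lies in. Partitioning $X_K=\bigsqcup_{w\in R(I,K)}F(A,K,w)$ therefore gives
$$
(T_u^{IJ}T_v^{JK}f)(A)=\sum_{w\in R(I,K)}c_{u,v^{-1}}^{w}(I,J,K)\sum_{C\in F(A,K,w)}f(C)=\sum_{w\in R(I,K)}c_{u,v^{-1}}^{w}(I,J,K)\,(T_w^{IK}f)(A),
$$
which is the claimed identity since $A\in X_I$ and $f$ were arbitrary. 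Finite support of the coefficients follows from Remark~\ref{rem:finitesupport} (with the roles adjusted for $v^{-1}$): $c_{u,v^{-1}}^{w}(I,J,K)=0$ once $\ell(w)>\ell(u)+\ell(v^{-1})+\ell(w_J)=\ell(u)+\ell(v)+\ell(w_J)$, and only finitely many $w\in R(I,K)$ satisfy $\ell(w)\le\ell(u)+\ell(v)+\ell(w_J)$ because $W$ is finitely generated (balls in $W$ are finite). The only genuinely delicate point is the bookkeeping with inverses — making sure that the distance from a $J$-simplex $D$ to a $K$-simplex $C$ is governed by $v^{-1}\in R(J,K)$ when the distance from $C$ to $D$ is $v\in R(K,J)$ — but this is immediate from the definition~(\ref{eq:deltadist}) of $\delta$ on simplices together with $\delta(d,c)=\delta(c,d)^{-1}$ for chambers; everything else is a routine interchange of sums and an appeal to the already-established distance regularity.
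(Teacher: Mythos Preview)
Your proof is correct and follows essentially the same route as the paper: unwind the definitions, switch the order of summation, identify the inner count as $|F(A,J,u)\cap F(C,J,v^{-1})|$ via the observation that $C\in F(D,K,v)$ iff $D\in F(C,J,v^{-1})$, apply Theorem~\ref{thm:reg}, and then partition $X_K$ into the spheres $F(A,K,w)$ (invoking Remark~\ref{rem:finitesupport} for finite support). The slight tangle in your parenthetical about whether $v$ lies in $R(K,J)$ or $R(J,K)$ stems from a typo in the stated hypothesis---the operator $T_v^{JK}$ is defined for $v\in R(J,K)$, and the paper's own proof tacitly uses this---so it is not a defect in your argument.
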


\begin{proof} Directly from definition of the adjacency operators we compute, for $A\in X_I$,
\begin{align*}
(T_u^{IJ}T_v^{JK}f)(A)&=\sum_{B\in F(A,J,u)}\sum_{D\in F(B,K,v)}f(D)\\
&=\sum_{D\in X_K}\sum_{B\in X_J}1_{F(A,J,u)}(B)1_{F(B,K,v)}(D)f(D),
\end{align*}
where for any subset $F\subseteq X$ we write $1_{F}$ for the indicator function of~$F$. Since $D\in F(B,K,v)$ if and only if $B\in F(D,J,v^{-1})$ we have $1_{F(B,K,v)}(D)=1_{F(D,J,v^{-1})}(B)$, and thus
\begin{align*}
(T_u^{IJ}T_v^{JK}f)(A)&=\sum_{D\in X_K}|F(A,J,u)\cap F(D,J,v^{-1})|f(D).
\end{align*}
Since $X_K$ is the disjoint union of the sets $F(A,K,w)$ with $w\in R(I,K)$ we have
\begin{align*}
(T_u^{IJ}T_v^{JK}f)(A)&=\sum_{w\in R(I,K)}\sum_{D\in F(A,K,w)}|F(A,J,u)\cap F(D,J,v^{-1})|f(D),
\end{align*}
and the result follows from Theorem~\ref{thm:reg} and Remark~\ref{rem:finitesupport}. 
\end{proof}

\begin{cor}\label{cor:mods} Let $I$ and $J$ be spherical, and let $\scA(I,J)$ be the vector space over $\CC$ spanned by the operators $\{T_w^{IJ}\mid w\in R(I,J)\}$. Then $\{T_w^{IJ}\mid w\in R(I,J)\}$ is a basis of $\scA(I,J)$. Moreover, $\scA(I,I)$ is an algebra, and $\scA(I,J)$ is a left $\scA(I,I)$-module and a right $\scA(J,J)$-module. 
\end{cor}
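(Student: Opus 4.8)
The plan is to deduce everything from Proposition~\ref{prop:linear} together with a dimension count. First I would establish linear independence of $\{T_w^{IJ}\mid w\in R(I,J)\}$. The cleanest way is to note that the generalised spheres $\{F(A,J,w)\mid w\in R(I,J)\}$ partition $X_J$ for each fixed $A\in X_I$ (since every $B\in X_J$ has a well-defined Weyl distance $\delta(A,B)\in R(I,J)$), so if $\sum_w \lambda_w T_w^{IJ}=0$ then, evaluating on the indicator function $1_{F(A,J,w_0)}$ for a fixed $A$ and $w_0$, we get $\lambda_{w_0}\,|F(A,J,w_0)|=0$; since $I$ is spherical, Theorem~\ref{thm:sphere} gives $|F(A,J,w_0)|>0$, whence $\lambda_{w_0}=0$. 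Thus $\{T_w^{IJ}\}_{w\in R(I,J)}$ is a basis of $\scA(I,J)$ essentially by definition of $\scA(I,J)$ as their span.

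Next I would verify the algebra and module structure. By Proposition~\ref{prop:linear}, for $u\in R(I,J)$ and $v\in R(K,J)$ we have $T_u^{IJ}T_v^{JK}=\sum_{w\in R(I,K)}c_{u,v^{-1}}^w(I,J,K)\,T_w^{IK}$, a finite sum by the remark therein (and Remark~\ref{rem:finitesupport}), so the product of two basis operators of the indicated types again lies in the appropriate span. Specialising $K=J=I$ shows $\scA(I,I)$ is closed under the (associative, since it is composition of operators) multiplication, and contains the identity operator $T_e^{II}$ (as $e\in R(I,I)$ and $T_e^{II}f=f$ because $F(A,I,e)=\{A\}$), so $\scA(I,I)$ is a unital associative $\CC$-algebra. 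Taking $K=I$ with $v\in R(I,J)$ shows $\scA(I,I)\cdot\scA(I,J)\subseteq\scA(I,J)$, i.e. composition on the left by operators in $\scA(I,I)$ preserves $\scA(I,J)$; taking instead $I:=J$, $J:=J$, $K:=J$ appropriately — more precisely, for $u\in R(I,J)$ and $v\in R(J,J)$, Proposition~\ref{prop:linear} gives $T_u^{IJ}T_v^{JJ}\in\scA(I,J)$ — shows $\scA(I,J)$ is a right $\scA(J,J)$-module. The module axioms (bilinearity, associativity, unitality) are inherited from the ambient associative composition of linear operators, so nothing further needs to be checked.

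I expect the only genuine point requiring care is the linear independence argument, and specifically making sure that the "evaluate on an indicator function" trick is legitimate — one must observe that the operators $T_w^{IJ}$ act on the full space $\{f:X_J\to\CC\}$, which contains $1_{F(A,J,w_0)}$, and that $F(A,J,w_0)$ is nonempty precisely because $I$ is spherical (this is exactly where Theorem~\ref{thm:sphere}, or even just the surjectivity established in its proof, is used). Everything else is formal: the finiteness of the structure-constant sums is quoted from Proposition~\ref{prop:linear} and Remark~\ref{rem:finitesupport}, and the associativity/unitality come for free from working inside $\End$ of a vector space. No separate verification of commutativity is needed here (indeed $\scA(I,I)$ need not be commutative in general).
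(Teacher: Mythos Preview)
Your proposal is correct and follows essentially the same approach as the paper: linear independence via a test-function evaluation, then closure under composition via Proposition~\ref{prop:linear}. The only cosmetic difference is that the paper evaluates on the delta function $\delta_B$ of a single simplex $B\in F(A,J,v)$ (which directly returns the coefficient $a_v$), whereas you evaluate on the indicator $1_{F(A,J,w_0)}$ of the whole sphere (returning $\lambda_{w_0}|F(A,J,w_0)|$); both arguments implicitly use that $F(A,J,w_0)\neq\emptyset$, which follows from Theorem~\ref{thm:sphere}.
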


\begin{proof}
Let $T=\sum_{w\in R(I,J)}a_wT_w^{IJ}$, with $a_w\in\mathbb{C}$. Let $A\in X_I$, and $B\in F(A,J,v)$ with $v\in R(I,J)$, and let $\delta_{B}:X_J\to\mathbb{C}$ be the function $\delta_{B}(B)=1$ and $\delta_B(B')=0$ for all $B'\in X_J\backslash\{B\}$. Then $T\delta_B(A)=a_v$, and hence the operators $\{T_w^{IJ}\mid w\in R(I,J)\}$ are linearly independent, and thus form a basis of $\scA(I,J)$. Proposition~\ref{prop:linear} shows that $\scA(I,I)$ is closed under multiplication, and that $\scA(I,J)$ is closed under left multiplication by elements of $\scA(I,I)$ and right multiplication by elements of $\scA(J,J)$. Hence the result.
\end{proof}

In the language of \textit{association schemes} (see \cite{BCN:89}),   the algebra $\scA(I,I)$ is the \textit{Bose-Mesner algebra} of the natural association scheme on the cotype $I$ simplices of~$X$.

\begin{example} 
Consider the $F_4$ example of Example~\ref{ex:F41}. Write $A_i=T_{w_i}^{II}$ for $i=0,1,2,3,4$. Note that the minimal length double coset representatives $w_i$ are involutions. This implies that~$\scA(I,I)$ is commutative, for if $\delta(A,B)=w\in R(I,I)$ then $\delta(B,A)=w^{-1}=w$ and so
$$
c_{u,v^{-1}}^w(I,I,I)=|F(A,I,u)\cap F(B,I,v^{-1})|=|F(B,I,v)\cap F(A,I,u^{-1})|=c_{v,u^{-1}}^{w}(I,I,I).
$$
By Proposition~\ref{prop:linear}, the computations in Example~\ref{ex:F41} give the coefficient of $A_3$ in the expansions of $A_1A_1$, $A_2A_1$, $A_3A_1$, and $A_4A_1$. We obtain:
\begin{align*}
A_1A_1&=s(s+1)(st+1)(st^2+1)A_0+[s^2(t^2+t+1)+s-1]A_1+(s+1)(st+1)A_2+A_3\\
A_2A_1&=s^3t(t^2+t+1)A_1+(s^2-1)(st+1)A_2+s(t^2+t+1)A_3\\
A_3A_1&=s^4t^3A_1+s^2t^2(s+1)(st+1)A_2+[(s-1)(st^2+st+s+1)+s^3t(t^2+t+1)]A_3\\
&\quad+(s+1)(st+1)(st^2+1)A_4\\
A_4A_1&=s^4t^3A_3+(s^2-1)(st+1)(st^2+1)A_4.
\end{align*}
We note that these formulae are sufficient to compute the character table for the algebra~$\scA(I,I)$. 
\end{example}

\begin{example}
Consider the $E_8$ example of Example~\ref{ex:E82}. Write $A_i=T_{w_i}^{II}$ for $0\leq i\leq 34$. In this case the algebra spanned by the operators $A_i$ is not commutative, for example in the notation of Example~\ref{ex:E82} we have $C_{2,1}^6=1$ and $C_{1,2}^6=0$ (see Corollary~\ref{cor:commute} for more on commutativity). Proposition~\ref{prop:linear} gives
$
A_2A_1=C_{2,1}^1A_1+C_{2,1}^2A_2+C_{2,1}^3A_3+C_{2,1}^4A_4+C_{2,1}^5A_5+C_{2,1}^6A_6.
$
\end{example}

Consider the case $I=J=\emptyset$. Write $T_w^{\emptyset\emptyset}=T_w$ for all $w\in W$ and let $\scH=\scA(\emptyset,\emptyset)$. By Proposition~\ref{prop:linear} we have
\begin{align}\label{eq:baseexpansion}
T_uT_v=\sum_{w\in W}c_{u,v^{-1}}^wT_w\quad\text{for all $u,v\in W$},
\end{align}
where the numbers $c_{u,v^{-1}}^w$ are defined in~(\ref{eq:chamberreg}). From the geometry of the building it is not hard to show that  (see \cite[Theorem~3.4]{Par:06})
\begin{align}\label{eq:pres}
\begin{aligned}
T_wT_s=\begin{cases}T_{ws}&\textrm{if $\ell(ws)>\ell(w)$}\\
q_sT_{ws}+(q_s-1)T_w&\textrm{if $\ell(ws)<\ell(w)$}.
\end{cases}
\end{aligned}
\end{align}
Thus the algebra~$\scH$ is isomorphic to the extensively studied \textit{(Iwahori) Hecke algebra} of the Coxeter system~$(W,S)$ (see~\cite{Lus:03}). Our goal now is to interpret our more general algebras and modules $\scA(I,J)$ in terms of the more familiar Hecke algebra~$\scH$. 

For each spherical subset $I$ of $S$ define an element $\mathbf{1}_I$ of $\scH$ by
$$
\mathbf{1}_I=\frac{1}{N(I)}\sum_{w\in W_I}T_w.
$$
The subalgebra $\mathbf{1}_I\scH\mathbf{1}_I$ of $\scH$ is called a \textit{parabolic Hecke algebra} (see \cite{APV:13,CIK:71}). Note that $\mathbf{1}_I\scH\mathbf{1}_I$ and $\scH$ have different units ($\mathbf{1}_I$ and $T_1$ respectively). 

\begin{lemma}\label{lem:magic} Let $I$ be spherical. Then $T_u\mathbf{1}_I=\mathbf{1}_IT_u=q_u\mathbf{1}_I$ for all $u\in W_I$, and $\mathbf{1}_I^2=\mathbf{1}_I$.
\end{lemma}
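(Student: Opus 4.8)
The plan is to exploit the multiplication rule (\ref{eq:pres}) together with a bijective argument on the Coxeter group $W_I$. First I would establish the key identity $T_u\mathbf{1}_I = q_u\mathbf{1}_I$ for $u\in W_I$ by reducing to the case $u=s\in I$, that is, to showing $T_s\mathbf{1}_I = q_s\mathbf{1}_I$; the general case then follows by induction on $\ell(u)$ using $q_{uv}=q_uq_v$ when $\ell(uv)=\ell(u)+\ell(v)$, writing a reduced expression $u=s_1\cdots s_\ell$ in $W_I$ and peeling off generators one at a time (noting that each $s_i\in I$, so this stays inside $W_I$). To prove $T_s\mathbf{1}_I = q_s\mathbf{1}_I$, I would split the sum $\sum_{w\in W_I}T_w$ according to whether $\ell(sw)>\ell(w)$ or $\ell(sw)<\ell(w)$. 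Left multiplication by $s$ is an involution on $W_I$ pairing each $w$ with $\ell(sw)>\ell(w)$ to $sw$ with $\ell(s(sw))<\ell(sw)$, so I can index the sum by the $\lceil |W_I|/2\rceil$ pairs $\{w, sw\}$ with $\ell(w)<\ell(sw)$. For such a pair, (\ref{eq:pres}) gives $T_sT_w = T_{sw}$ and $T_sT_{sw} = q_sT_w + (q_s-1)T_{sw}$, so $T_s(T_w+T_{sw}) = q_s T_w + q_s T_{sw} = q_s(T_w+T_{sw})$. Summing over all pairs yields $T_s\sum_{w\in W_I}T_w = q_s\sum_{w\in W_I}T_w$, and dividing by $N(I)$ gives $T_s\mathbf{1}_I=q_s\mathbf{1}_I$. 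The identity $\mathbf{1}_IT_u=q_u\mathbf{1}_I$ follows symmetrically, using right multiplication by $s$ and the right-handed version of (\ref{eq:pres}) (which holds by the same induction/geometry, or by applying the anti-automorphism $T_w\mapsto T_{w^{-1}}$ of $\scH$ together with $q_{w}=q_{w^{-1}}$).

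For the idempotent statement, I would compute directly:
$$
\mathbf{1}_I^2 = \frac{1}{N(I)}\sum_{u\in W_I} T_u\,\mathbf{1}_I = \frac{1}{N(I)}\sum_{u\in W_I} q_u\,\mathbf{1}_I = \frac{1}{N(I)}\Big(\sum_{u\in W_I}q_u\Big)\mathbf{1}_I = \frac{N(I)}{N(I)}\,\mathbf{1}_I = \mathbf{1}_I,
$$
where the first equality uses linearity and the definition of $\mathbf{1}_I$, the second uses the identity $T_u\mathbf{1}_I=q_u\mathbf{1}_I$ just proved, and the last uses $N(I)=\sum_{u\in W_I}q_u$ from (\ref{eq:firstcard}).

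The only genuine subtlety is the pairing argument for $T_s\mathbf{1}_I=q_s\mathbf{1}_I$: one must verify that $w\mapsto sw$ is a fixed-point-free involution on $W_I$ compatible with the length condition (standard, since $\ell(sw)\neq\ell(w)$ always in a Coxeter group, and $s\in I$ ensures $sw\in W_I$), and that every term of $\sum_{w\in W_I}T_w$ is hit exactly once by this pairing. Everything else is routine bookkeeping with (\ref{eq:pres}) and the multiplicativity $q_{uv}=q_uq_v$ on length-additive products. I do not expect the right-multiplication case to pose any extra difficulty beyond mirroring the left case.
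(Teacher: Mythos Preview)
Your proof is correct, but it takes a different route from the paper's. The paper argues directly for arbitrary $u\in W_I$ (no reduction to generators) by expanding
\[
T_u\mathbf{1}_I=\frac{1}{N(I)}\sum_{w\in W_I}\Big(\sum_{v\in W_I} c_{u,v^{-1}}^w\Big)T_w
\]
and then evaluating the inner sum combinatorially in the building: for chambers $a,b$ with $\delta(a,b)=w$ one has $\sum_{v\in W_I} c_{u,v^{-1}}^w=\sum_{v\in W_I}|\cC_u(a)\cap\cC_{v^{-1}}(b)|=|\cC_u(a)|=q_u$, since the sets $\cC_{v^{-1}}(b)$ with $v\in W_I$ partition the chambers over the cotype~$I$ face of~$b$. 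Your argument, by contrast, is purely algebraic: reduce to $u=s\in I$ by induction on length, then handle $T_s\mathbf{1}_I$ by pairing $W_I$ into $\{w,sw\}$ and checking $T_s(T_w+T_{sw})=q_s(T_w+T_{sw})$ from the Hecke relations. Your approach has the virtue of staying entirely inside $\scH$ and not invoking the building at all; the paper's approach avoids the induction and pairing bookkeeping and is in keeping with its broader theme of reading Hecke-algebra identities off sphere cardinalities. One small wrinkle: as stated in the paper, (\ref{eq:pres}) is the \emph{right}-multiplication rule $T_wT_s=\cdots$, so it is your identity $\mathbf{1}_IT_s=q_s\mathbf{1}_I$ that follows from it directly, while the left version $T_sT_w=\cdots$ you use for $T_s\mathbf{1}_I$ is the one that needs the anti-automorphism $T_w\mapsto T_{w^{-1}}$; you have these swapped, but the correction is trivial. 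The idempotent computation is identical in both proofs.
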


\begin{proof} By (\ref{eq:pres}) the vector space spanned by $\{T_u\mid u\in W_I\}$ is a subalgebra of $\scH$. Therefore if $u\in W_I$ we have
$$
T_u\mathbf{1}_I=\frac{1}{N(I)}\sum_{v\in W_I}T_uT_v=\frac{1}{N(I)}\sum_{w\in W_I}\bigg(\sum_{v\in W_I}c_{u,v^{-1}}^w\bigg)T_w.
$$
If $w\in W_I$ and $a,b\in\cC$ with $b\in \cC_w(a)$ then $\sum_{v}c_{u,v^{-1}}^w=\sum_{v}|\cC_u(a)\cap\cC_{v^{-1}}(b)|=|\cC_u(a)|=q_u$. Thus $T_u\mathbf{1}_I=q_u\mathbf{1}_I$, and the proof that $\mathbf{1}_IT_u=q_u\mathbf{1}_I$ is similar. 

Then
$$
\mathbf{1}_I^2=\frac{1}{N(I)}\sum_{w\in W_I}T_w\mathbf{1}_I=\frac{1}{N(I)}\sum_{w\in W_I}q_w\mathbf{1}_I=\frac{N(I)}{N(I)}\mathbf{1}_I=\mathbf{1}_I,
$$
completing the proof. 
\end{proof}

For spherical subsets $I$ and $J$ of $S$ and $w\in R(I,J)$ define an element $P_w^{IJ}\in\mathbf{1}_I\scH\mathbf{1}_J$ by 
$$
P_w^{IJ}=\frac{N(I)}{N(I\cap wJw^{-1})}\mathbf{1}_IT_w\mathbf{1}_J.
$$
The reason for this choice of normalisation will become clear in Theorem~\ref{thm:product}. 
 
\begin{lemma}\label{lem:double} If $I$ and $J$ are spherical subsets of~$S$ and $w\in R(I,J)$ then
$$
P_w^{IJ}=\frac{1}{N(J)}\sum_{z\in W_IwW_J}T_z.
$$
\end{lemma}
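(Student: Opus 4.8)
The plan is to expand the definition $P_w^{IJ}=\frac{N(I)}{N(I\cap wJw^{-1})}\mathbf{1}_IT_w\mathbf{1}_J$ and rewrite $\mathbf{1}_IT_w\mathbf{1}_J$ as a linear combination of the $T_z$ with $z\in W_IwW_J$, using Lemma~\ref{lem:exp} to parametrise the double coset. First I would substitute the definitions of $\mathbf{1}_I$ and $\mathbf{1}_J$ to get $\mathbf{1}_IT_w\mathbf{1}_J=\frac{1}{N(I)N(J)}\sum_{x\in W_I}\sum_{y\in W_J}T_xT_wT_y$. The key structural input is that, by Lemma~\ref{lem:exp}, every $v\in W_IwW_J$ is written uniquely as $v=mwz$ with $m\in M_I(J,w)$ and $z\in W_J$, and moreover $\ell(mwz)=\ell(m)+\ell(w)+\ell(z)$; hence $T_{mwz}=T_mT_wT_z$ and $q_{mwz}=q_mq_wq_z$ by the multiplicativity of $q_{(-)}$ over length-additive products recorded before equation~(\ref{eq:c}).

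The heart of the computation is to collapse the $W_I$-sum and the $W_J$-sum against the single $T_w$. Using the factorisation $W_I=M_I(J,w)\,W_I(J,w)$ with $\ell(xz)=\ell(x)+\ell(z)$ from~(\ref{eq:exp1}), I would split $\sum_{x\in W_I}T_x$ and apply Lemma~\ref{lem:magic}: for $t\in W_I(J,w)=W_{I\cap wJw^{-1}}$ we have $twW_J=wW_J$, so $t=wsw^{-1}$ for a suitable $s\in W_J$, and $T_tT_w=T_mT_{wsw^{-1}}T_w$. More cleanly, I would instead absorb the inner part of the $W_I$-sum using that $W_I(J,w)w=wW_{w^{-1}Iw\cap J}$ and run the whole sum $\sum_{t\in W_I(J,w)}T_t\cdot T_w\cdot(\cdots)$ through Lemma~\ref{lem:magic} applied to $\mathbf 1_{I\cap wJw^{-1}}$: one gets $\sum_{t\in W_I(J,w)}T_tT_w = T_w\sum_{s\in W_{w^{-1}Iw\cap J}}T_s$, and then this extra sum over $W_{w^{-1}Iw\cap J}\subseteq W_J$ merges with the $\mathbf 1_J$ on the right, contributing exactly the factor $N(w^{-1}Iw\cap J)=N(I\cap wJw^{-1})$ (the last equality by~(\ref{eq:swap})) that cancels the normalising constant. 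After that cancellation one is left with $\frac{1}{N(J)}\sum_{m\in M_I(J,w)}\sum_{y\in W_J}T_mT_wT_y$, and since the map $(m,y)\mapsto mwy$ is a bijection onto $W_IwW_J$ with $T_{mwy}=T_mT_wT_y$ (Lemma~\ref{lem:exp}), this is precisely $\frac{1}{N(J)}\sum_{z\in W_IwW_J}T_z$.

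The step I expect to be the main obstacle is the careful bookkeeping in that middle collapse: correctly identifying $W_I(J,w)w$ with $wW_{w^{-1}Iw\cap J}$ (which is~(\ref{eq:stabiliser}) rewritten), checking that the length-additivity needed to turn products of $T$'s into single $T$'s genuinely holds at each stage, and verifying that the double-counting factor coming from the stabiliser $W_I(J,w)$ is exactly $N(I\cap wJw^{-1})$ rather than some off-by-a-Poincar\'e-polynomial quantity. An alternative, possibly cleaner, route that avoids some of this: show directly that the right-hand side $Q_w:=\frac{1}{N(J)}\sum_{z\in W_IwW_J}T_z$ satisfies $\mathbf 1_IQ_w=Q_w$ and $Q_w\mathbf 1_J=Q_w$ (using Lemma~\ref{lem:magic}, since left-multiplying the double-coset sum by $T_u$ for $u\in W_I$ permutes the cosets $uW_I$-... rather, permutes the summands up to the $q$-weights exactly as in the proof of Lemma~\ref{lem:magic}), and that $\mathbf 1_IT_w\mathbf 1_J = \frac{N(I\cap wJw^{-1})}{N(I)}\,Q_w$ by comparing the coefficient of $T_w$ on both sides; then the lemma follows by dividing. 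I would pursue whichever of these keeps the constant-tracking most transparent, but both reduce to Lemma~\ref{lem:exp}, Lemma~\ref{lem:magic}, multiplicativity of $q_{(-)}$, and the symmetry~(\ref{eq:swap}).
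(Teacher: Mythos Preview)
Your main route is correct and is essentially the paper's own proof: expand $\mathbf{1}_I$ only (keeping $\mathbf{1}_J$ intact), split $W_I=M_I(J,w)W_I(J,w)$ via~(\ref{eq:exp1}), push each $T_t$ with $t\in W_I(J,w)$ through $T_w$ using $tw=w(w^{-1}tw)$ with length-additivity on both sides, absorb the resulting $T_{w^{-1}tw}$ into $\mathbf{1}_J$ by Lemma~\ref{lem:magic}, and finish with the bijection of Lemma~\ref{lem:exp}. The only cosmetic difference is that the paper phrases the middle step as $T_yT_w\mathbf{1}_J=q_yT_w\mathbf{1}_J$ and sums $q_y$ over $W_I(J,w)$ to get $N(I\cap wJw^{-1})$ directly, whereas you conjugate into $W_{w^{-1}Iw\cap J}$ first and then invoke~(\ref{eq:swap}); either way the constant is the same, and in fact the equality $q_t=q_{w^{-1}tw}$ you implicitly use already gives $N(I\cap wJw^{-1})=N(J\cap w^{-1}Iw)$ without appealing to~(\ref{eq:swap}).
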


\begin{proof} Using~(\ref{eq:exp1})
\begin{align*}
\mathbf{1}_IT_w\mathbf{1}_J&=\frac{1}{N(I)}\sum_{u\in W_I}T_uT_w\mathbf{1}_J=\frac{1}{N(I)}\sum_{x\in M_I(J,w)}\sum_{y\in W_I(J,w)}T_xT_yT_w\mathbf{1}_J.
\end{align*}
Since $w$ is $(I,J)$-reduced, and since $y\in W_I(J,w)$, we have $\ell(yw)=\ell(y)+\ell(w)$. Moreover $yw=wy'$ with $y'=w^{-1}yw\in W_J$ and $\ell(wy')=\ell(w)+\ell(y')$. This implies that $q_{y'}=q_y$ (because $q_yq_w=q_{yw}=q_{wy'}=q_wq_{y'}$), and by (\ref{eq:pres}) and Lemma~\ref{lem:magic} we have
$$
T_yT_w\mathbf{1}_J=T_{yw}\mathbf{1}_J=T_{wy'}\mathbf{1}_J=T_wT_{y'}\mathbf{1}_J=q_{y'}T_w\mathbf{1}_J=q_yT_w\mathbf{1}_J.
$$
Thus by (\ref{eq:stabiliser}) we have $\sum_{y\in W_I(J,w)}T_xT_yT_w\mathbf{1}_J=N(I\cap wJw^{-1})T_xT_w\mathbf{1}_J$, and so using Lemma~\ref{lem:exp},
$$
P_w^{IJ}=\frac{N(I)}{N(I\cap wJw^{-1})}\mathbf{1}_IT_w\mathbf{1}_J=\sum_{x\in M_I(J,w)}T_xT_w\mathbf{1}_J=\frac{1}{N(J)}\sum_{z\in W_IwW_J}T_{z}.\qedhere
$$
\end{proof}

\begin{remark}\label{rem:secondproof} Lemma~\ref{lem:double} gives an independent algebraic  proof of Theorem~\ref{thm:sphere}, as follows. The linear map $\pi:\scH\to\CC$ with $\pi(T_w)= q_w$ for all $w\in W$ satisfies 
$$
\pi(T_w)\pi(T_s)=\begin{cases}
\pi(T_{ws})&\text{if $\ell(ws)>\ell(w)$}\\
q_s\pi(T_{ws})+(q_s-1)\pi(T_w)&\text{if $\ell(ws)<\ell(w)$},
\end{cases}
$$
and so by (\ref{eq:pres}) the map $\pi$ is a $1$-dimensional representation of $\scH$. Applying this representation to the formula in Lemma~\ref{lem:double} gives, for $w\in R(I,J)$,
\begin{align}\label{eq:secondproof}
\frac{N(I)}{N(I\cap wJw^{-1})}q_w=\frac{1}{N(J)}\sum_{z\in W_IwW_J}q_z.
\end{align}
If $A\in X_I$ then $\cC(F(A,J,w))=\bigsqcup_{z\in W_IwW_J}\cC_z(a)$ for any fixed $a\in\cC(A)$. Thus $$|\cC(F(A,J,w))|=\sum_{z\in W_IwW_J}q_z,$$ and since each simplex $B\in F(A,J,w)$ is contained in $N(J)$ chambers of $\cC(F(A,J,w))$, the right hand side of~(\ref{eq:secondproof}) equals $|F(A,J,w)|$, proving Theorem~\ref{thm:sphere}. 
\end{remark}

\begin{cor}\label{cor:basis} If $I$ and $J$ are spherical subsets of~$S$ then the vector space
$
\mathbf{1}_I\scH\mathbf{1}_J$ has basis $\{P_w^{IJ}\mid w\in R(I,J)\}$.
\end{cor}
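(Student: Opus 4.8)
The plan is to prove Corollary~\ref{cor:basis} by verifying separately that the family $\{P_w^{IJ}\mid w\in R(I,J)\}$ spans $\mathbf{1}_I\scH\mathbf{1}_J$ and that it is linearly independent, with Lemma~\ref{lem:double} as the principal tool in both parts.

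For the spanning statement, I would begin from the fact that $\{T_z\mid z\in W\}$ is a basis of $\scH$, so that $\mathbf{1}_I\scH\mathbf{1}_J$ is spanned by the elements $\mathbf{1}_IT_z\mathbf{1}_J$ with $z\in W$. The central step is to show that each such element is a scalar multiple of $P_w^{IJ}$, where $w\in R(I,J)$ is the unique reduced representative of the double coset $W_IzW_J$. To see this I would apply Lemma~\ref{lem:exp} to factor $z=xwy$ with $x\in M_I(J,w)\subseteq W_I$, $y\in W_J$, and $\ell(z)=\ell(x)+\ell(w)+\ell(y)$; the length-additivity then gives $T_z=T_xT_wT_y$ by the usual ``$T_{ab}=T_aT_b$ for reduced products'' property of the Hecke algebra (an immediate induction from~(\ref{eq:pres})). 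Since $x\in W_I$ and $y\in W_J$, Lemma~\ref{lem:magic} converts the outer factors into scalars, yielding $\mathbf{1}_IT_z\mathbf{1}_J=q_xq_y\,\mathbf{1}_IT_w\mathbf{1}_J$, which by the definition of $P_w^{IJ}$ equals $\tfrac{q_xq_y\,N(I\cap wJw^{-1})}{N(I)}\,P_w^{IJ}$. Hence $\{P_w^{IJ}\mid w\in R(I,J)\}$ spans $\mathbf{1}_I\scH\mathbf{1}_J$.

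For linear independence I would invoke the explicit description $P_w^{IJ}=\tfrac{1}{N(J)}\sum_{z\in W_IwW_J}T_z$ from Lemma~\ref{lem:double}. Because the double cosets $W_IwW_J$, with $w\in R(I,J)$, partition $W$, these expressions are nonzero linear combinations of the basis elements $T_z$ of $\scH$ with pairwise disjoint supports, and hence are linearly independent. Combining the two parts establishes that $\{P_w^{IJ}\mid w\in R(I,J)\}$ is a basis.

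I do not anticipate a genuine obstacle here: the corollary is essentially bookkeeping built on Lemmas~\ref{lem:exp}, \ref{lem:magic}, and~\ref{lem:double}. The only point needing a moment's care is the passage $T_z=T_xT_wT_y$, which relies on the factorisation $z=xwy$ having additive length — exactly what Lemma~\ref{lem:exp} supplies — together with the standard reduced-product multiplication rule in~$\scH$.
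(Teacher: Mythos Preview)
Your proposal is correct and follows essentially the same route as the paper: factor $z=xwy$ via Lemma~\ref{lem:exp}, use length-additivity and Lemma~\ref{lem:magic} to reduce $\mathbf{1}_IT_z\mathbf{1}_J$ to a scalar multiple of $\mathbf{1}_IT_w\mathbf{1}_J$, and then deduce linear independence from the disjoint-support expression in Lemma~\ref{lem:double}. The paper's proof is slightly terser but logically identical.
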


\begin{proof} Let $z\in W$. Then $z\in W_IwW_J$ for some $w\in R(I,J)$, and we can write $z=xwy$ with $x\in M_I(J,w)$ and $y\in W_J$. Thus $\ell(z)=\ell(x)+\ell(w)+\ell(y)$. Using (\ref{eq:pres}) and Lemma~\ref{lem:magic} we have
$$
\mathbf{1}_IT_z\mathbf{1}_J=\mathbf{1}_IT_xT_wT_y\mathbf{1}_J=q_xq_y\mathbf{1}_IT_w\mathbf{1}_J.
$$
Hence the set $\{P_w^{IJ}\mid w\in R(I,J)\}$ spans $\mathbf{1}_I\scH\mathbf{1}_J$. It follows from Lemma~\ref{lem:double} and the linear independence of the $T_z$, $z\in W$, that the operators $P_w^{IJ}$, $w\in R(I,J)$, are linearly independent. 
\end{proof}

The following theorem, which shows that the multiplication table for the operators $T_w^{IJ}$ and $P_w^{IJ}$ are the same, is the main result of this section. 

\begin{thm}\label{thm:product} Let $I,J,K$ be spherical, and suppose that $u\in R(I,J)$ and $v\in R(K,J)$. Then
$$
P_u^{IJ}P_v^{JK}=\sum_{w\in R(I,K)}c_{u,v^{-1}}^w(I,J,K)P_w^{IK}
$$
where $c_{u,v^{-1}}^w(I,J,K)$ is as in Theorem~\ref{thm:reg}. In particular, the linear map
$$
\begin{matrix}\theta&:&\scA(I,I)&\longrightarrow&\mathbf{1}_I\scH\mathbf{1}_I\\
&&T_w^{II}&\longmapsto&P_{w}^{II}
\end{matrix}\qquad\textrm{is an isomorphism of algebras},
$$
and $\scA(I,J)$ is a left $\mathbf{1}_I\scH\mathbf{1}_I$-module and a right $\mathbf{1}_J\scH\mathbf{1}_J$-module. 
\end{thm}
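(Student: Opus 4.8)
The plan is to first establish the multiplication rule $P_u^{IJ}P_v^{JK}=\sum_{w\in R(I,K)}c_{u,v^{-1}}^w(I,J,K)P_w^{IK}$ by a direct computation inside $\scH$, and then to read off the statements about $\theta$ and the module structures formally, since at that point the operators $T_w^{IJ}$ and $P_w^{IJ}$ will be seen to have identical structure constants.

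For the multiplication rule I would substitute the closed forms from Lemma~\ref{lem:double}, namely $P_u^{IJ}=\frac{1}{N(J)}\sum_{x\in W_IuW_J}T_x$ and $P_v^{JK}=\frac{1}{N(K)}\sum_{y\in W_JvW_K}T_y$ (finite sums, since $I,J,K$ are spherical), and then expand each product $T_xT_y$ via the base relation (\ref{eq:baseexpansion}), $T_xT_y=\sum_{z\in W}c_{x,y^{-1}}^zT_z$. After the substitution $y'=y^{-1}$, which ranges over $W_Kv^{-1}W_J$ as $y$ ranges over $W_JvW_K$, this yields
$$
P_u^{IJ}P_v^{JK}=\frac{1}{N(J)N(K)}\sum_{z\in W}\Bigg(\sum_{x\in W_IuW_J}\sum_{y'\in W_Kv^{-1}W_J}c_{x,y'}^z\Bigg)T_z.
$$
Now partition $W=\bigsqcup_{w\in R(I,K)}W_IwW_K$; for $z$ in the coset $W_IwW_K$ the inner double sum is exactly $N(J)\,c_{u,v^{-1}}^w(I,J,K)$ by formula (\ref{eq:firstform}) of Theorem~\ref{thm:reg}, applied to the triple $(I,J,K)$ with the elements $u\in R(I,J)$, $v^{-1}\in R(K,J)$, $w\in R(I,K)$; in particular this quantity is independent of which $z$ in the coset is chosen. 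Substituting back and invoking Lemma~\ref{lem:double} once more in the form $\frac{1}{N(K)}\sum_{z\in W_IwW_K}T_z=P_w^{IK}$ gives the claimed identity, the sum over $w\in R(I,K)$ being finite by Remark~\ref{rem:finitesupport}.

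Everything else is bookkeeping. Corollaries~\ref{cor:mods} and~\ref{cor:basis} give that $\{T_w^{IJ}\mid w\in R(I,J)\}$ and $\{P_w^{IJ}\mid w\in R(I,J)\}$ are bases of $\scA(I,J)$ and of $\mathbf{1}_I\scH\mathbf{1}_J$ respectively, so for every pair of spherical subsets the linear map $\theta_{IJ}\colon T_w^{IJ}\mapsto P_w^{IJ}$ is a well-defined linear bijection. Comparing Proposition~\ref{prop:linear} for the $T_w^{IJ}$ with the rule just proved for the $P_w^{IJ}$, the maps $\theta_{IJ}$ have the same structure constants; equivalently they intertwine the composition map $\scA(I,J)\times\scA(J,K)\to\scA(I,K)$ with the multiplication map $\mathbf{1}_I\scH\mathbf{1}_J\times\mathbf{1}_J\scH\mathbf{1}_K\to\mathbf{1}_I\scH\mathbf{1}_K$ inherited from $\scH$, for all spherical $I,J,K$. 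Taking all three subsets equal to $I$ shows $\theta=\theta_{II}$ is an algebra homomorphism; being bijective it is an isomorphism, and it is unital, with $T_e^{II}=\id\mapsto P_e^{II}=\mathbf{1}_I^2=\mathbf{1}_I$ by Lemma~\ref{lem:magic}. Specialising the first factor to $\scA(I,I)$ (i.e.\ the case $\scA(I,I)\times\scA(I,J)\to\scA(I,J)$) exhibits $\theta_{IJ}$ as an isomorphism of left $\scA(I,I)$-modules onto $\mathbf{1}_I\scH\mathbf{1}_J$ with $\mathbf{1}_I\scH\mathbf{1}_I$ acting by left multiplication in $\scH$; transporting along $\theta$ makes $\scA(I,J)$ a left $\mathbf{1}_I\scH\mathbf{1}_I$-module, and the symmetric case $\scA(I,J)\times\scA(J,J)\to\scA(I,J)$ makes it a right $\mathbf{1}_J\scH\mathbf{1}_J$-module.

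I expect no real obstacle: the only place calling for care is the appeal to (\ref{eq:firstform}) — keeping the inversion $v\leftrightarrow v^{-1}$ and the slightly asymmetric roles of $I,J,K$ in Theorem~\ref{thm:reg} straight — together with the routine check that all the sums in sight are finite.
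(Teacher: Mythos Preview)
Your proposal is correct and follows essentially the same route as the paper: expand both $P_u^{IJ}$ and $P_v^{JK}$ via Lemma~\ref{lem:double}, multiply out using~(\ref{eq:baseexpansion}), partition $W$ into $(W_I,W_K)$ double cosets, recognise the inner sum via formula~(\ref{eq:firstform}) of Theorem~\ref{thm:reg}, and then collapse via Lemma~\ref{lem:double} again; the isomorphism and module statements are then read off from the matching structure constants and the bases of Corollaries~\ref{cor:mods} and~\ref{cor:basis}. Your write-up is slightly more explicit than the paper's (the substitution $y'=y^{-1}$, the general maps $\theta_{IJ}$, the unitality check), but there is no substantive difference.
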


\begin{proof} By Lemma~\ref{lem:double}, the expansion $T_xT_y=\sum_{z\in W} c_{x,y^{-1}}^zT_z$ (see (\ref{eq:baseexpansion})), and the decomposition of $W$ into $W_IwW_K$ double cosets with $w\in R(I,K)$, we have
\begin{align*}
P_u^{IJ}P_v^{JK}&=\frac{1}{N(J)N(K)}\sum_{\substack{x\in W_IuW_J\\ y\in W_JvW_K}}T_xT_y\\
&=\frac{1}{N(J)N(K)}\sum_{w\in R(I,K)}\sum_{z\in W_IwW_K}\Bigg(\sum_{\substack{x\in W_IuW_J\\
y\in W_JvW_K}}c_{x,y^{-1}}^z\Bigg)T_z.
\end{align*}
Thus by Theorem~\ref{thm:reg} and Lemma~\ref{lem:double} we have
$$
P_u^{IJ}P_v^{JK}=\sum_{w\in R(I,K)}\frac{c_{u,v^{-1}}^w(I,J,K)}{N(K)}\sum_{z\in W_IwW_K}T_z=\sum_{w\in R(I,K)} c_{u,v^{-1}}^w(I,J,K)P_w^{IK}.
$$

It follows that the linear map $\theta:\scA(I,I)\to \mathbf{1}_I\scH\mathbf{1}_I$ with $T_w^{II}\mapsto P_w^{II}$ is an isomorphism of algebras by using the fact that $\scA(I,I)$ has basis $\{T_w^{II}\mid w\in R(I,I)\}$ and $\mathbf{1}_I\scH\mathbf{1}_I$ has basis $\{P_w^{II}\mid w\in R(I,I)\}$. The final statement follows from Corollary~\ref{cor:mods}. 
\end{proof}

\section{Applications}\label{sec:5}

In this final section we describe some applications of our main theorems.

\subsection{Structure constants in parabolic Hecke algebras}

Theorem~\ref{thm:product} gives a combinatorial interpretation of the structure constants in parabolic Hecke algebras, and Theorem~\ref{thm:pathsv1} gives a formula for these structure constants. In the case when $W$ has type $\tilde{A}_n$ and $I=S\backslash\{s_0\}$ these structure constants are \textit{Hall polynomials}, and nonnegativity of these polynomials was proved by Miller Malley~\cite{MM:96}. More generally, if $W$ is of affine type and $I=S\backslash\{s_0\}$ then the structure constants are the structure constants of the algebra spanned by the \textit{Macdonald spherical functions}, and nonnegativity of these structure constants was proved independently by Parkinson~\cite{Par:06} and Schwer~\cite{Sch:06}. Our formula in Theorem~\ref{thm:pathsv1} immediately gives a considerable generalisation of these results to arbitrary type:

\begin{cor}\label{cor:nonneg}
The structure constants in the parabolic Hecke algebra $\mathbf{1}_I\scH\mathbf{1}_I$ relative to the basis $\{P_w^{II}\mid w\in R(I,I)\}$ are nonnegative. 
\end{cor}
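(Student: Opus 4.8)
The plan is to read off the result directly from the combinatorial formula in Theorem~\ref{thm:pathsv1} together with the algebra isomorphism $\theta$ of Theorem~\ref{thm:product}. Recall that by Theorem~\ref{thm:product} the structure constants of $\mathbf{1}_I\scH\mathbf{1}_I$ with respect to the basis $\{P_w^{II}\mid w\in R(I,I)\}$ are exactly the intersection numbers $c_{u,v^{-1}}^{w}(I,I,I)$, and these are the cardinalities $|F(A,I,u)\cap F(B,I,v^{-1})|$ for suitable simplices $A,B$, hence are visibly nonnegative integers. So in fact there is essentially nothing to prove: the statement is an immediate consequence of the results already established.

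First I would spell out the logical chain explicitly. Fix spherical $I$, and take $u,v\in R(I,I)$. By Theorem~\ref{thm:product},
$$
P_u^{II}P_v^{II}=\sum_{w\in R(I,I)}c_{u,v^{-1}}^{w}(I,I,I)\,P_w^{II},
$$
so the structure constants of the parabolic Hecke algebra relative to the $P$-basis are precisely the numbers $c_{u,v^{-1}}^{w}(I,I,I)$. By Theorem~\ref{thm:reg} (or directly by its definition), $c_{u,v^{-1}}^{w}(I,I,I)=|F(A,I,u)\cap F(B,I,v^{-1})|$ whenever $A\in X_I$ and $B\in F(A,I,w)$, which is a cardinality of a finite set and hence a nonnegative integer. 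That already gives the corollary.

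Since the paper also advertises Theorem~\ref{thm:pathsv1} as giving a \emph{formula} establishing nonnegativity, I would additionally remark that Theorem~\ref{thm:pathsv1} expresses $c_{u,v^{-1}}^{w}(I,I,I)$ as $\sum_{\gamma}q(\gamma)$, a sum over certain pointed pregalleries $\gamma$ of weights $q(\gamma)=\prod_{s\in S}q_s^{\alpha_s(\gamma)}(q_s-1)^{\sigma_s(\gamma)}$. Each such weight is a monomial (with coefficient $1$) in the quantities $q_s$ and $q_s-1$, both of which are nonnegative integers for a regular locally finite building (indeed $q_s\ge 1$, and $q_s\ge 2$ in the thick case), so every term $q(\gamma)$ is a nonnegative integer and hence so is the sum. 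This also shows the structure constants are polynomials in the parameters $(q_s)_{s\in S}$ with nonnegative integer coefficients once expanded in the $q_s$, recovering and generalising the Hall polynomial positivity of Miller Malley and the Macdonald spherical function positivity of Parkinson and Schwer.

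There is no serious obstacle here; the only mild care needed is to confirm that the regularity and local finiteness hypotheses in force throughout the paper guarantee $q_s\in\ZZ_{\ge 1}$ (so that $q_s-1\in\ZZ_{\ge 0}$), which is immediate from the definition $q_s+1=|\cC(\pi_s)|$ with $\pi_s$ an $s$-panel. Thus the proof is essentially one line: combine Theorem~\ref{thm:product} with Theorem~\ref{thm:reg} (for the counting interpretation) and Theorem~\ref{thm:pathsv1} (for the explicit positive formula).
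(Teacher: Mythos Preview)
Your argument is correct and matches the paper's approach: the corollary is immediate from Theorem~\ref{thm:product} (identifying the structure constants with the numbers $c_{u,v^{-1}}^w(I,I,I)$) together with either the cardinality interpretation in Theorem~\ref{thm:reg} or the manifestly nonnegative formula in Theorem~\ref{thm:pathsv1}.

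One small overreach to flag: your final sentence claims the structure constants are polynomials in the $q_s$ with nonnegative integer coefficients \emph{once expanded in the $q_s$}. That is not what Theorem~\ref{thm:pathsv1} gives. The weights $q(\gamma)=\prod_s q_s^{\alpha_s(\gamma)}(q_s-1)^{\sigma_s(\gamma)}$ are nonnegative as numbers (since $q_s\ge 1$), and they are monomials with coefficient~$1$ in the variables $q_s$ and $q_s-1$, but expanding in the $q_s$ alone can introduce negative coefficients; see for instance the term $(s-1)(st^2+st+s+1)$ in Example~\ref{ex:F41}. So the corollary as stated (nonnegativity of the actual numerical structure constants for a given regular locally finite building) is exactly what follows; the stronger polynomial-coefficient positivity you allude to does not.
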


\subsection{Commutativity of algebras of Hecke operators on buildings}

Recall that for distance regular graphs the algebra~$\scA$ of averaging operators is necessarily commutative (this boils down to the fact that the graph distance satisfies $d(x,y)=d(y,x)$). In contrast, the algebra $\scA(I,I)$ is not necessarily commutative. Using Theorem~\ref{thm:product} we have the following commutativity classification (where we use standard Bourbaki labelling~\cite{Bou:02}), showing that in fact commutativity is extremely rare. 

\begin{cor}\label{cor:commute}
Suppose that $(W,S)$ is irreducible. Let $I$ be a spherical subset of~$S$. The algebra $\scA(I,I)$ is noncommutative if $|S\backslash I|>1$. If $I=S\backslash\{i\}$ then $\scA(I,I)$ is commutative in the cases
\begin{enumerate}
\item $W=A_n$ and $1\leq i\leq n$, $W=B_n$ and $1\leq i\leq n$, $W=D_n$ and $1\leq i\leq n/2$ or $i=n-1,n$, $W=E_6$ and $i=1,2,6$, $W=E_7$ and $i=1,2,7$, $W=E_8$ and $i=1,8$, $W=F_4$ and $i=1,4$, $W=H_3$ and $i=1,3$, $W=H_4$ and $i=1$, $W=I_2(p)$ and $i=1,2$, or
\item $W$ affine and $i$ is a special type (that is, $i$ is in the orbit of the special node $0$ under the action of diagram automorphisms),
\end{enumerate}
and noncommutative otherwise.
\end{cor}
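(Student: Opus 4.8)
The plan is to reduce the commutativity of $\scA(I,I)$ — equivalently, by Theorem~\ref{thm:product}, of the parabolic Hecke algebra $\mathbf{1}_I\scH\mathbf{1}_I$ — to a purely combinatorial condition on the double coset set $R(I,I)$, and then to verify that condition case by case. The key structural observation is the one already exploited in Example~\ref{ex:F41}: if \emph{every} element of $R(I,I)$ is an involution, then for $\delta(A,B)=w\in R(I,I)$ we also have $\delta(B,A)=w^{-1}=w$, and the symmetry argument given there shows $c_{u,v^{-1}}^w(I,I,I)=c_{v,u^{-1}}^w(I,I,I)$ for all $u,v,w$, hence $\scA(I,I)$ is commutative. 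Conversely, if some $w\in R(I,I)$ satisfies $w\neq w^{-1}$ (equivalently $w^{-1}\notin W_IwW_I$ fails, but more precisely $W_IwW_I\neq W_Iw^{-1}W_I$), one expects noncommutativity: using the ``smallest'' such $w$ one produces a pair $A_u,A_v$ of adjacency operators whose product is non-symmetric. Concretely, one can take $u$ or $v$ to be $T_w^{II}$ itself and examine the coefficient of $T_{w^{-1}}^{II}$ versus $T_w^{II}$ — as in the $E_8$ example where $C_{2,1}^6=1$ but $C_{1,2}^6=0$. So the first reduction to carry out rigorously is:
$$
\scA(I,I)\ \text{is commutative}\iff w=w^{-1}\ \text{for all }w\in R(I,I)\iff W_IwW_I=W_Iw^{-1}W_I\ \text{for all }w\in W.
$$
The forward implication of the first equivalence needs the non-vanishing of an appropriate structure constant (to rule out accidental coincidences), which follows from Theorem~\ref{thm:pathsv1} by exhibiting an explicit pointed pregallery, or more simply from Theorem~\ref{thm:sphere}/the fact that $F(A,I,w)$ is nonempty.

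Next I would translate the condition ``every $(W_I,W_I)$-double coset is stable under inversion'' into a statement about $W$ acting on the coset space $W/W_I$, or equivalently about the cotype-$I$ simplices of the Coxeter complex. Writing $I=S\setminus\{i\}$ first (the $|S\setminus I|=1$ case), the double cosets $W_I\backslash W/W_I$ are in bijection with $W_I$-orbits on $W/W_I$, which is a homogeneous space; the condition $w=w^{-1}$ for all $w\in R(I,I)$ is precisely the condition that the permutation action of $W$ on $W/W_I$ is \emph{multiplicity-free and self-paired} in the sense that every orbital (orbit of $W$ on $(W/W_I)^2$) is symmetric. This is a classical and computable condition. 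For finite $W$ one can appeal to known tables of ``generously transitive'' or self-paired permutation representations $W$ on maximal-parabolic quotients (these are exactly the ones appearing in the theory of distance-transitive graphs / the classification used in \cite{BCN:89}); alternatively, one checks directly, for each minimal double coset representative $w$, whether $w$ is conjugate to $w^{-1}$ within $W_IwW_I$. For $|S\setminus I|>1$ one shows noncommutativity by an embedding/restriction argument: pick two distinct nodes $i,j\notin I$; the existence of a non-involutory double coset representative for $W_{S\setminus\{i,j\}}$-type data inside $W$ can be produced from a rank-$2$ or rank-$3$ sub-diagram, and this persists because parabolic subalgebras behave well under passing to standard sub-Coxeter systems. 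The cleanest route is to find, for each irreducible $(W,S)$ with $|S\setminus I|>1$, an explicit pair $u,v\in R(I,I)$ with $c_{u,v^{-1}}^w\neq c_{v,u^{-1}}^w$ for some $w$, again via Theorem~\ref{thm:pathsv1}; one small bad configuration suffices and then it propagates.

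For the affine case, $W\cong Q\rtimes W_0$ with $W_0$ finite, and $I=S\setminus\{i\}$. When $i$ is a special node, $X_I$ is the set of vertices of a fixed special type, $R(I,I)$ is indexed by dominant coroots $\lambda\in Q^+$, and the relevant representatives are (minimal-length reps of) translations $t_\lambda$; one shows $W_It_\lambda W_I=W_It_{-w_0\lambda}W_I=W_It_\lambda W_I$ — the point being that $t_\lambda^{-1}=t_{-\lambda}$ and $-\lambda$ lies in the $W_0$-orbit of $\lambda$ intersected appropriately, so the double coset is inversion-stable; this is the same phenomenon that makes the spherical Hecke algebra / the algebra of Macdonald spherical functions commutative, and indeed it recovers that classical fact. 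When $i$ is \emph{not} special one exhibits a non-involutory double coset representative, e.g.\ using a translation $t_\lambda$ with $\lambda$ not in the orbit needed, or a rank-$2$ affine sub-configuration.

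I expect the main obstacle to be the finite exceptional cases in part (1): verifying, for $W=E_6,E_7,E_8,F_4,H_3,H_4$ and each maximal-parabolic $I=S\setminus\{i\}$, exactly for which $i$ every element of $R(I,I)$ is an involution. This is a finite but genuinely intricate computation — the list of commutative cases is irregular (e.g.\ $E_8$ only for $i=1,8$, $F_4$ only for $i=1,4$, $H_4$ only for $i=1$), so there is no uniform argument and one must either run the check on a computer (as in Example~\ref{ex:E82}, using \cite{MAGMA}) or invoke the existing classification of self-paired maximal-parabolic actions of Weyl groups. The honest writeup will reduce everything to the involution criterion above and then cite/perform this case check; the $A_n,B_n,D_n,I_2(p)$ families and the ``$|S\setminus I|>1$'' noncommutativity are the routine parts, and the affine statement follows from the translation-lattice description together with the observation that $-w_0$ permutes the fundamental coweights, fixing exactly the special nodes up to diagram automorphism.
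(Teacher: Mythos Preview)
The paper's proof is two sentences: it invokes Theorem~\ref{thm:product} to identify $\scA(I,I)$ with the parabolic Hecke algebra $\mathbf{1}_I\scH\mathbf{1}_I$, and then simply cites the classification already established by the same authors in \cite[Theorem~2.1]{APV:13}. Your proposal instead sketches a self-contained argument, and in outline it resembles what one would expect \cite{APV:13} to contain --- but your central reduction is incorrect.

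You assert the equivalence
\[
\scA(I,I)\ \text{commutative}\ \Longleftrightarrow\ w=w^{-1}\ \text{for all }w\in R(I,I),
\]
and propose to organise the entire proof around it. The implication $\Leftarrow$ is fine (it is exactly the symmetry argument from Example~\ref{ex:F41}), but $\Rightarrow$ is \emph{false}, and the failure occurs precisely in the affine case you are trying to cover. Take $(W,S)$ of type $\tilde{A}_2$ with $I=\{s_1,s_2\}$, so $W_I=W_0\cong S_3$ and the double cosets $W_I\backslash W/W_I$ are indexed by dominant coroot-lattice elements $\lambda\in Q^+$. For $\lambda=2\alpha_1+\alpha_2$ one computes $(W_It_\lambda W_I)^{-1}=W_It_{-\lambda}W_I=W_It_{\lambda^*}W_I$ with $\lambda^*=-w_0\lambda=\alpha_1+2\alpha_2\neq\lambda$, so the corresponding element of $R(I,I)$ is not an involution. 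Nevertheless $\scA(I,I)$ \emph{is} commutative here: it is the spherical Hecke algebra, commutative by the Satake isomorphism (cf.\ \cite{Mac:71,Par:06}). Your affine paragraph implicitly assumes $\lambda^*=\lambda$ for every dominant $\lambda$; this holds only when $-1\in W_0$, hence fails for $\tilde{A}_n$ ($n\geq 2$), $\tilde{D}_{2n+1}$, and $\tilde{E}_6$. So the involution criterion does not ``recover'' the commutativity of the spherical Hecke algebra, and your proposed reduction cannot carry the classification.

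What survives of your plan: the involution criterion still proves commutativity in those finite cases of item~(1) where it happens to hold, and exhibiting explicit noncommuting pairs (as in Example~\ref{ex:E82}) still proves noncommutativity in specific instances. But you would need a genuinely different argument for the affine special-vertex case (Bernstein presentation or Satake), and for the remaining finite cases you cannot infer noncommutativity merely from the existence of a non-involutory representative --- you must actually produce a noncommuting pair. The paper sidesteps all of this by quoting \cite{APV:13}.
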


\begin{proof} By Theorem~\ref{thm:product} the algebra $\scA(I,I)$ is isomorphic to the parabolic Hecke algebra $\mathbf{1}_I\scH\mathbf{1}_I$. The commutative parabolic Hecke algebras are classified in \cite[Theorem~2.1]{APV:13}, and the result follows from this classification.
\end{proof}

\subsection{Combinatorics of double cosets in groups with $BN$-pairs}

 Let $G$ be a group acting on $X$ by type preserving simplicial automorphisms. The group $G$ has a \textit{strongly transitive} action on $X$ (with respect to some fixed system of apartments) if $G$ is transitive on the set of pairs $(\cA,a)$ with $\cA$ an apartment and $a\in\cA$ a chamber of~$\cA$. The group $G$ has a \textit{Weyl-transitive} action on $X$ if for all $w\in W$ the action is transitive on all ordered pairs $(a,b)$ of chambers such that $\delta(a,b)=w$. Recall that strong transitivity implies Weyl transitivity, although the reverse implication does not hold in general (see \cite{ABr:07}). 

If $G$ acts Weyl transitively on~$X$ then it is immediate that $|F(A,J,u)\cap F(A',J,v)|$ depends only on $u,v$ and $\delta(A,A')$, and therefore Theorem~\ref{thm:reg} is most interesting in that case where the building does not admit a Weyl transitive group action. 

We now outline applications of our formula in Theorem~\ref{thm:pathsv1} to double coset combinatorics in groups $G$ acting strongly transitively on $X$. Fix a chamber $a\in \cC$ and an apartment $\cA$ with $a\in \cA$. Let $B=\mathrm{stab}_G(a)$ and $N=\mathrm{stab}_G(\cA)$. The pointwise stabiliser of $\cA$ is $H=B\cap N$, and $N/H$ is isomorphic to $W$ (the Coxeter group of $X$). It is well-known that $(G,B,N,W)$ is a $BN$-pair (also called a Tits system, see \cite[Theorem~6.56]{ABr:08}). The stabiliser of the cotype $I$ simplex $A=a_I$ of $a$ is the \textit{parabolic subgroup}
$
P_I=\bigsqcup_{w\in W_I}BwB
$. 

\begin{prop}\label{prop:groups1}
With the above notation, if $I,J$ and $K$ are spherical, $u\in R(I,J)$, $v\in R(K,J)$, and $w\in R(I,K)$, then 
\begin{align*}
\Card_{G/P_J}(P_IuP_J\cap wP_KvP_J)=c_{u,v}^w(I,J,K).
\end{align*}
Thus Theorem~\ref{thm:pathsv1} gives a formula for these cardinalities.
\end{prop}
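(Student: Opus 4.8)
The plan is to translate the group-theoretic double coset intersection into an intersection of generalised spheres in $X$, and then quote Theorem~\ref{thm:reg} (with the explicit value supplied by Theorem~\ref{thm:pathsv1}). First I would set up the standard dictionary between $G/P_L$ and $X_L$ for spherical $L$. Since $G$ acts strongly transitively on $X$ it acts transitively on $\cC$, and as $G$ is type-preserving and every cotype-$L$ simplex is a face of a chamber, $G$ acts transitively on $X_L$; the stabiliser of $a_L$ is the parabolic $P_L=\bigsqcup_{x\in W_L}BxB$, so $gP_L\mapsto g\cdot a_L$ is a $G$-equivariant bijection $G/P_L\to X_L$. In particular $P_IuP_J$ and $wP_KvP_J$ are unions of left $P_J$-cosets, so the symbol $\Card_{G/P_J}$ applied to them (and to their intersection) counts the corresponding $P_J$-cosets, i.e.\ equals the number of cotype-$J$ simplices in the image.

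Second, I would record the coset description of the extended Weyl distance: for $g,h\in G$ and spherical subsets $L,L'$ and $x\in R(L,L')$, one has $\delta(g\cdot a_L,\,h\cdot a_{L'})=x$ if and only if $g^{-1}h\in P_LxP_{L'}$. This is immediate from the identity $\delta(\cC(g\cdot a_L)\times\cC(h\cdot a_{L'}))=W_L x W_{L'}$, the decomposition $P_LxP_{L'}=\bigsqcup_{z\in W_LxW_{L'}}BzB$, and the $BN$-pair description $\delta(g'B,h'B)=z\iff g'^{-1}h'\in BzB$ for chambers.

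Third, take $A=a_I$ and, as a representative of $F(A,K,w)$, take $B=w\cdot a_K$; this lies in $F(A,K,w)$ because $w\in BwB\subseteq P_IwP_K$ and $w$ is $(I,K)$-reduced, so $\delta(a_I,w\cdot a_K)=w$. The dictionary of the previous paragraph then gives
$$
F(A,J,u)=\{\,h\cdot a_J\mid h\in P_IuP_J\,\},\qquad
F(B,J,v)=\{\,h\cdot a_J\mid h\in wP_KvP_J\,\},
$$
so under the bijection $G/P_J\cong X_J$ the intersection $F(A,J,u)\cap F(B,J,v)$ corresponds exactly to $(P_IuP_J\cap wP_KvP_J)/P_J$, which has cardinality $\Card_{G/P_J}(P_IuP_J\cap wP_KvP_J)$. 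On the other hand, since $A\in X_I$ and $B\in F(A,K,w)$, Theorem~\ref{thm:reg} gives $|F(A,J,u)\cap F(B,J,v)|=c_{u,v}^w(I,J,K)$. Combining the two yields the claimed equality, and Theorem~\ref{thm:pathsv1} then provides the explicit combinatorial formula for these cardinalities.

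The only real work is the routine verification in the second step: checking that $gP_J\mapsto g\cdot a_J$ is well-defined and bijective, that it intertwines double-coset membership with the extended Weyl distance, and that $P_IuP_J$, $wP_KvP_J$ are unions of left $P_J$-cosets so the count of simplices matches $\Card_{G/P_J}$. None of this is deep; the substance is entirely the translation of the building statements of Sections~\ref{sec:1}--\ref{sec:3} into $BN$-pair language.
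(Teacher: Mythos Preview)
Your proposal is correct and follows essentially the same approach as the paper: identify $X_L$ with $G/P_L$, translate the generalised spheres $F(A,J,u)$ and $F(B,J,v)$ (with $A=a_I$, $B=w\cdot a_K$) into the coset sets $P_IuP_J/P_J$ and $wP_KvP_J/P_J$, and then apply Theorem~\ref{thm:reg}. The paper's proof is terser and leaves the dictionary implicit, while you spell out the $BN$-pair translation more carefully, but there is no substantive difference.
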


\begin{proof} The cotype $I$ simplices $X_I$ of $X$ can be identified with $G/P_I$. Let $A=P_I$ under this identification. If $w\in R(I,J)$ then $F(A,J,w)$ is
the set of $P_J$ cosets in $
P_IwP_J
$.
Therefore if $I,J$ and $K$ are spherical, and if $u\in R(I,J)$, $v\in R(K,J)$, and $w\in R(I,K)$, then 
$$
\Card_{G/P_J}(P_IuP_J\cap wP_KvP_J)=|F(A,J,u)\cap F(wA,J,v)|=c_{u,v}^w(I,J,K).\qedhere
$$
\end{proof}

\subsection{Convolution algebras and Gelfand pairs}

Suppose that $G$ is a locally compact group acting strongly transitively on the building~$X$, and let $B=\mathrm{stab}_G(a)$ be the stabiliser of a fixed chamber (as in the previous section). Assume that $B$ is compact. A function $f:G\to\CC$ is \textit{bi-$B$-invariant} if $f(bgb')=f(g)$ for all $g\in G$ and all $b,b'\in B$. Let $\cF(B\backslash G/B)$ be the space of bi-$B$-invariant complex valued functions supported on finitely many double cosets. Define convolution in $\cF(B\backslash G/B)$ by
$$
(f_1*f_2)(g)=\int_Gf_1(gh)f_2(h^{-1})\,d\mu(h),
$$
where $\mu$ is a Haar measure on $G$ normalised so that $\mu(B)=1$. It is elementary that $\cF(B\backslash G/B)$ is closed under convolution, and so $\cF(B\backslash G/B)$ is an algebra.  

For $I,J\subseteq S$ spherical and $w\in R(I,J)$, let $\psi_{P_IwP_J}=\frac{1}{N(J)}\chi_{P_IwP_J}$, where $\chi_{P_IwP_J}$ is the characteristic function of the double coset $P_IwP_J$.

\begin{prop}\label{prop:groups2}
Let $I,J,K\subseteq S$ be spherical, and let $u\in R(I,J)$ and $v\in R(J,K)$. Then
$$
\psi_{P_IuP_J}*\psi_{P_JvP_K}=\sum_{w\in R(I,K)}c_{u,v^{-1}}^w(I,J,K)\psi_{P_IwP_K}.
$$
\end{prop}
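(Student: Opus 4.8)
The plan is to reduce Proposition~\ref{prop:groups2} to the purely algebraic identity already established in Theorem~\ref{thm:product}, by identifying the convolution algebra $\cF(B\backslash G/B)$ with the Hecke algebra $\scH=\scA(\emptyset,\emptyset)$ and tracking where the functions $\psi_{P_IwP_J}$ go under this identification. First I would recall the standard fact (for a $BN$-pair with $B$ compact and $\mu(B)=1$) that $\cF(B\backslash G/B)$ has $\CC$-basis the characteristic functions $\chi_{BwB}$, $w\in W$, and that the map $\chi_{BwB}\mapsto T_w$ is an algebra isomorphism $\cF(B\backslash G/B)\xrightarrow{\sim}\scH$; this is exactly the classical realisation of the Iwahori--Hecke algebra as a convolution algebra, and under it the defining relations~(\ref{eq:pres}) correspond to the relations in $\cF(B\backslash G/B)$ coming from $|BsB/B|=q_s$. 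Since strong transitivity gives the standard-transitivity properties needed for this, I would simply cite it.

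Next I would compute the image of $\psi_{P_IwP_J}$ under this isomorphism. Because $P_I=\bigsqcup_{x\in W_I}BxB$ and $P_J=\bigsqcup_{z\in W_J}BzB$, and (by Lemma~\ref{lem:exp}) every element of $W_IwW_J$ is written uniquely as $xwz$ with $x\in M_I(J,w)$, $z\in W_J$ and additive lengths, the double coset $P_IwP_J$ decomposes as $\bigsqcup_{z\in W_IwW_J}BzB$. Hence $\chi_{P_IwP_J}=\sum_{z\in W_IwW_J}\chi_{BzB}$, which maps to $\sum_{z\in W_IwW_J}T_z$. Comparing with Lemma~\ref{lem:double}, this says precisely that $\psi_{P_IwP_J}=\tfrac{1}{N(J)}\chi_{P_IwP_J}$ maps to $P_w^{IJ}$.

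With these identifications in place, the proposition is immediate: applying the algebra isomorphism to both sides and using Theorem~\ref{thm:product} (with the roles of $J$ and $K$ matching the statement here, noting $v\in R(J,K)$ so $v^{-1}\in R(K,J)$, and $P_v^{JK}\leftrightarrow\psi_{P_JvP_K}$) gives
$$
\psi_{P_IuP_J}*\psi_{P_JvP_K}\;\longleftrightarrow\;P_u^{IJ}P_v^{JK}=\sum_{w\in R(I,K)}c_{u,v^{-1}}^w(I,J,K)\,P_w^{IK}\;\longleftrightarrow\;\sum_{w\in R(I,K)}c_{u,v^{-1}}^w(I,J,K)\,\psi_{P_IwP_K},
$$
and since the isomorphism is injective the claimed identity holds. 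I expect the main (and really the only) obstacle to be the bookkeeping in the two lemmas cited above: verifying carefully that $P_IwP_J=\bigsqcup_{z\in W_IwW_J}BzB$ with no repetitions (which is where Lemma~\ref{lem:exp} and the length-additivity are used) and that the convolution isomorphism $\cF(B\backslash G/B)\cong\scH$ is correctly normalised so that $\chi_{BsB}\mapsto T_s$ rather than a scalar multiple; once those are pinned down, everything else is a transport of structure through Theorem~\ref{thm:product}. Alternatively, one could give a direct proof mirroring the proof of Proposition~\ref{prop:linear}, computing $(\psi_{P_IuP_J}*\psi_{P_JvP_K})(g)$ pointwise as a count of $P_J$-cosets in $P_IuP_J\cap gP_JvP_K$ and invoking Proposition~\ref{prop:groups1}; I would mention this as a remark but carry out the shorter Hecke-algebra argument.
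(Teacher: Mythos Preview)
Your proposal is correct, but it takes a genuinely different route from the paper. The paper proves the proposition by the direct computation you relegate to a remark: it first checks by a change of variable in the convolution integral that $\psi_{P_IuP_J}*\psi_{P_JvP_K}$ is left $P_I$-invariant and right $P_K$-invariant (hence a combination of the $\psi_{P_IwP_K}$), and then evaluates the convolution at a representative $w\in R(I,K)$ to read off the coefficient as $\Card_{G/P_J}(P_IuP_J\cap wP_Kv^{-1}P_J)$, which Proposition~\ref{prop:groups1} identifies with $c_{u,v^{-1}}^w(I,J,K)$. Your approach instead transports the whole question into $\scH$ via the classical isomorphism $\cF(B\backslash G/B)\cong\scH$, $\chi_{BwB}\mapsto T_w$, observes that $\psi_{P_IwP_J}\mapsto P_w^{IJ}$ by Lemma~\ref{lem:double} and the Bruhat decomposition $P_IwP_J=\bigsqcup_{z\in W_IwW_J}BzB$, and then simply quotes Theorem~\ref{thm:product}. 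Your argument is more conceptual and shorter once the isomorphism is set up, and it makes the logical dependence on Theorem~\ref{thm:product} explicit; the paper's argument is more self-contained and avoids invoking the convolution realisation of $\scH$, relying only on the building-theoretic Proposition~\ref{prop:groups1}. Note incidentally that the paper then deduces Corollary~\ref{cor:gelfand1} from Proposition~\ref{prop:groups2}, whereas in your approach the content of that corollary (at the level of $B$-double cosets) is effectively the starting point.
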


\begin{proof}
The assumption that $I$ and $J$ are spherical ensures that $\psi_{P_IwP_J}$ is supported on finitely many $B$ double cosets, thus $\psi_{P_IwP_J}\in\cF(B\backslash G/B)$. If $g_1\in P_I$ and $g_2\in P_K$ then for all $g\in G$ we have
\begin{align*}
(\psi_{P_IuP_J}*\psi_{P_JvP_K})(g_1gg_2)&=\int_G\psi_{P_IuP_J}(g_1gg_2h)\psi_{P_JvP_K}(h^{-1})\,d\mu(h)\\
&=\int_G\psi_{P_IuP_J}(gh')\psi_{P_JvP_K}(h'^{-1}g_2)\,d\mu(h')=(\psi_{P_IuP_J}*\psi_{P_JvP_K})(g).
\end{align*}
Thus $\psi_{P_IuP_J}*\psi_{P_JvP_K}$ is left $P_I$-invariant and right $P_K$-invariant, and hence is a linear combination of terms $\psi_{P_IwP_K}$ with $w\in R(I,K)$. To compute the coefficient of $\psi_{P_IwP_K}$ in this linear combination, note that from the definition of convolution, and using Proposition~\ref{prop:groups1}, we have
\begin{align*}
\frac{1}{N(J)}(\psi_{P_IuP_J}*\psi_{P_JvP_J})(w)&=\frac{1}{N(J)}\Card_{G/B}(w^{-1}P_IuP_J\cap P_Kv^{-1}P_J)\\
&=\frac{1}{N(J)}\Card_{G/B}(P_IuP_J\cap wP_Kv^{-1}P_J)\\
&=\Card_{G/P_J}(P_IuP_J\cap wP_Kv^{-1}P_J)=c_{u,v^{-1}}^w(I,J,K),
\end{align*}
completing the proof.
\end{proof}

\begin{cor}\label{cor:gelfand1}
With the above notation, let $\cF(P_I\backslash G/P_I)$ be the subalgebra of $\cF(B\backslash G/B)$ consisting of bi-$P_I$-invariant functions. Then
$$
\cF(P_I\backslash G/P_I)\cong \mathbf{1}_I\scH\mathbf{1}_I,
$$
with the isomorphism given by $\psi_{P_IwP_I}\mapsto P_w^{II}$ for $w\in R(I,I)$. 
\end{cor}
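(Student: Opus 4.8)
The plan is to show that $\cF(P_I\backslash G/P_I)$ is isomorphic to $\mathbf{1}_I\scH\mathbf{1}_I$ by first observing that $\cF(P_I\backslash G/P_I)$ is exactly the image inside $\cF(B\backslash G/B)$ of the subalgebra spanned by the $\psi_{P_IwP_I}$, $w\in R(I,I)$, and then comparing structure constants with Theorem~\ref{thm:product}. First I would note that a bi-$P_I$-invariant function supported on finitely many $B$-double cosets is, in particular, bi-$B$-invariant, and is constant on each $P_I$-double coset; since $P_I=\bigsqcup_{y\in W_I}ByB$, each double coset $P_IwP_I$ is a finite union of $B$-double cosets (using sphericity of $I$), so $\cF(P_I\backslash G/P_I)$ has basis $\{\psi_{P_IwP_I}\mid w\in R(I,I)\}$. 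This is the analogue, on the group side, of Corollary~\ref{cor:basis}.

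Next I would apply Proposition~\ref{prop:groups2} in the special case $I=J=K$: it gives
$$
\psi_{P_IuP_I}*\psi_{P_IvP_I}=\sum_{w\in R(I,I)}c_{u,v^{-1}}^w(I,I,I)\,\psi_{P_IwP_I},
$$
which shows simultaneously that $\cF(P_I\backslash G/P_I)$ is closed under convolution (hence a subalgebra of $\cF(B\backslash G/B)$) and that its structure constants in the basis $\{\psi_{P_IwP_I}\}$ are exactly the numbers $c_{u,v^{-1}}^w(I,I,I)$. By Theorem~\ref{thm:product}, these are precisely the structure constants of $\mathbf{1}_I\scH\mathbf{1}_I$ in the basis $\{P_w^{II}\mid w\in R(I,I)\}$, and of $\scA(I,I)$ in the basis $\{T_w^{II}\}$. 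Therefore the linear map determined by $\psi_{P_IwP_I}\mapsto P_w^{II}$ sends basis to basis and respects multiplication, so it is an algebra isomorphism; one should also check it sends the identity to the identity, which holds because $\psi_{P_I e P_I}=\frac{1}{N(I)}\chi_{P_I}$ corresponds to $P_e^{II}=\mathbf{1}_I$, the unit of $\mathbf{1}_I\scH\mathbf{1}_I$.

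I do not expect a serious obstacle here; the corollary is essentially a formal consequence of Proposition~\ref{prop:groups2} (with $I=J=K$) together with Theorem~\ref{thm:product}. The only point requiring a little care is the bookkeeping that justifies that $\{\psi_{P_IwP_I}\mid w\in R(I,I)\}$ is genuinely a basis of $\cF(P_I\backslash G/P_I)$ --- i.e. that every bi-$P_I$-invariant finitely supported function is a (finite) linear combination of these, which follows from the disjoint decomposition $G=\bigsqcup_{w\in R(I,I)}P_IwP_I$ and the fact that each $P_IwP_I$ meets only finitely many $B$-double cosets when $I$ is spherical --- and the matching of units noted above. Everything else is a direct transcription of the algebra identities already established.
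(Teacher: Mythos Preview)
Your proposal is correct and follows essentially the same route as the paper: the paper's proof is the one-line observation that the corollary is immediate from Theorem~\ref{thm:product} and Proposition~\ref{prop:groups2}, and your argument simply unpacks this by specialising Proposition~\ref{prop:groups2} to $I=J=K$ and matching structure constants. Your additional remarks on the basis $\{\psi_{P_IwP_I}\mid w\in R(I,I)\}$ and on the identity element are correct elaborations but not required beyond what the cited results already supply.
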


\begin{proof} This is immediate from Theorem~\ref{thm:product} and Proposition~\ref{prop:groups2}.
\end{proof}

Recall that for any locally compact group $G$ and compact subgroup $K$, the pair $(G,K)$ is called a \textit{Gelfand pair} if the convolution algebra of compactly supported continuous bi-$K$-invariant functions on $G$ is commutative. 

\begin{cor}\label{cor:gelfand2}
The pair $(G,P_I)$ from Corollary~\ref{cor:gelfand1} is a Gelfand pair if and only if $\scA(I,I)$ is commutative, and hence only in the cases listed in Corollary~\ref{cor:commute}. 
\end{cor}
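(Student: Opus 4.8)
The plan is to identify the convolution algebra attached to the pair $(G,P_I)$ with the algebra $\cF(P_I\backslash G/P_I)$ of Corollary~\ref{cor:gelfand1}, and then invoke the isomorphisms already established. By definition $(G,P_I)$ is a Gelfand pair exactly when the convolution algebra $\cH(G,P_I)$ of compactly supported continuous bi-$P_I$-invariant functions on $G$ is commutative; note that this notion presupposes that $P_I$ is compact, which is checked below. So the first task is to show that $\cH(G,P_I)$ and $\cF(P_I\backslash G/P_I)$ coincide, at least as far as commutativity is concerned.

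First I would record that $P_I$ is a compact open subgroup of $G$: since $I$ is spherical, $W_I$ is finite, and $P_I=\bigsqcup_{w\in W_I}BwB$ is a finite union of cosets of the compact open subgroup~$B$, hence $P_I$ is itself compact and open. Because $P_I$ is open, each double coset $P_IgP_I$ is open (being a union of translates of the open set~$P_I$) and compact (being the continuous image of the compact set $P_I\times P_I$ under $(p,p')\mapsto pgp'$), and a bi-$P_I$-invariant function, being constant on each such double coset, is automatically locally constant. It follows that a bi-$P_I$-invariant function on $G$ is continuous with compact support if and only if it is supported on finitely many $P_I$-double cosets. Thus $\cH(G,P_I)$ and $\cF(P_I\backslash G/P_I)$ have the same underlying vector space, and their convolution products differ at most by a global scalar depending on the normalisation of Haar measure; in particular one of these algebras is commutative if and only if the other is.

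Granting this, the corollary is immediate. By Corollary~\ref{cor:gelfand1}, $\cF(P_I\backslash G/P_I)\cong\mathbf{1}_I\scH\mathbf{1}_I$, and by Theorem~\ref{thm:product} the map $T_w^{II}\mapsto P_w^{II}$ is an algebra isomorphism from $\scA(I,I)$ onto $\mathbf{1}_I\scH\mathbf{1}_I$. Hence $\cH(G,P_I)$ is commutative if and only if $\scA(I,I)$ is commutative, which is the asserted equivalence, and the explicit list of cases follows from Corollary~\ref{cor:commute}.

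The main obstacle --- really the only point that needs care --- is the topological bookkeeping in the middle paragraph: verifying that $P_I$ is compact and open, and that for bi-$P_I$-invariant functions the conditions ``continuous with compact support'' and ``supported on finitely many $P_I$-double cosets'' are the same. Once this is in place, the statement is a purely formal consequence of Corollary~\ref{cor:gelfand1} and Theorem~\ref{thm:product}.
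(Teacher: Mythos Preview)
Your proof is correct and takes essentially the same route as the paper: both reduce the statement to the chain of isomorphisms $\scA(I,I)\cong\mathbf{1}_I\scH\mathbf{1}_I\cong\cF(P_I\backslash G/P_I)$ furnished by Theorem~\ref{thm:product} and Corollary~\ref{cor:gelfand1}. The paper's proof is a single sentence and tacitly identifies the Gelfand-pair convolution algebra with $\cF(P_I\backslash G/P_I)$; your middle paragraph supplies exactly the topological bookkeeping (compact openness of $P_I$, equivalence of ``compactly supported continuous'' with ``supported on finitely many double cosets'') that the paper leaves implicit, so your version is if anything more complete.
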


\begin{proof}
From Theorem~\ref{thm:product} and Corollary~\ref{cor:gelfand1} we have $\scA(I,I)\cong \mathbf{1}_I\scH\mathbf{1}_I\cong \cF(P_I\backslash G/P_I)$ and the result follows.
\end{proof}

Note that Corollary~\ref{cor:gelfand2} implies that the only Gelfand pairs arising from buildings with infinite Coxeter groups are those Gelfand pairs $(G,K)$ where $K$ is the stabiliser of a special vertex in an affine building. These Gelfand pairs have received considerable attention (see for example \cite{CC:15}).

\subsection{Isotropic random walks on the simplices of buildings}
 
 Fix $I\subseteq S$ spherical, and let $(Z_n)_{n\geq 0}$ be a random walk on the set $X_I$ of all cotype $I$ simplices of a locally finite regular building~$X$. Thus $(Z_n)_{n\geq 0}$ is a Markov chain of $X_I$-valued random variables, with evolution governed by \textit{transition probabilities}
$$
p(A,B)=\mathbb{P}[Z_{n+1}=B\mid Z_n=A]\quad\text{for $A,B\in X_I$}.
$$
For simplicity we assume throughout that $(Z_n)_{n\geq 0}$ has \textit{bounded jumps}. That is, for each $A\in X_I$ the probability $p(A,B)$ is nonzero for only finitely many $B\in X_I$. 

One important goal in random walk theory is to obtain estimates for the $n$-step transition probabilities
$$
p^{(n)}(A,B)=\mathbb{P}[Z_n=B\mid Z_0=A],
$$
and in particular for the $n$-step return probabilities $p^{(n)}(A,A)$. Such a result is called a \textit{local limit theorem}, and ideally it takes the form of an asymptotic formula.

A random walk on $X_I$ is called \textit{isotropic} if $p(A,B)=p(A',B')$ whenever $\delta(A,B)=\delta(A',B')$. In other words, a walk is isotropic if the transition probabilities depend only on the Weyl distance. In this section we use Theorem~\ref{thm:product} to reduce the `general' case of isotropic random walks on the cotype~$I$ simplices of a building to the `special' case of isotropic random walks on the set of chambers of the building (where $I=\emptyset$). This latter case admits a rather complete theory when $X$ is an affine building (see \cite{PS:11}), and thus we can now obtain precise local limit theorems for isotropic random walks on the cotype $I$ simplices of an affine building. We will give a concrete example at the end of this section.

The \textit{transition operator} of a random walk~$(Z_n)_{n\geq 0}$ on $X_I$ is the operator $T$ acting on the space of all functions $f:X_I\to \mathbb{C}$ by
$$
Tf(A)=\sum_{B\in X_I}p(A,B)f(B).
$$
Thus the transition operator of an isotropic random walk $(Z_n)_{n\geq 0}$ on $X_I$ is given by
\begin{align}\label{eq:transition}
T=\sum_{w\in R(I,I)}p_wT_w^{II}\quad\text{where $p_w=p(A,B)$ for any $A,B\in X_I$ with $\delta(A,B)=w$},
\end{align}
and so $T$ is an element of the algebra~$\scA(I,I)$ (here we use the bounded jumps assumption to ensure that $T$ is a finite linear combination; in the general case $T$ lies in a completion of the algebra $\scA(I,I)$). It is useful to note that if $\delta(A,B)=w$ then $p^{(n)}(A,B)$ is the coefficient of $T_w^{II}$ in the expansion of $T^n\in\scA(I,I)$ as a linear combination of the basis $\{T_v^{II}\mid v\in R(I,I)\}$.

\begin{prop}\label{prop:probeq}
Let $I\subseteq S$ be spherical. Let $(Z_n)_{n\geq 0}$ be an isotropic random walk on $X_I$ with bounded jumps and transition probabilities $p(A,B)$, and let $p_w=p(A,B)$ for any $A,B\in X_I$ with $\delta(A,B)=w$. Let $(\overline{Z}_n)_{n\geq 0}$ be the isotropic random walk on $\cC$ with transition probabilities
$$
\overline{p}(a,b)=\frac{p_w}{N(I)}\quad\text{if $\delta(a,b)\in W_IwW_I$ with $w\in R(I,I)$}.
$$
Then for all $A,B\in X_I$ and all $a,b\in\cC$ with $\delta(a,b)\in W_I\delta(A,B)W_I$ we have
$$p^{(n)}(A,B)=\frac{1}{N(I)}\overline{p}^{(n)}(a,b)\quad\text{for all $n\in\mathbb{N}$}.
$$ 
\end{prop}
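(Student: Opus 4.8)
The plan is to transport both walks into the Iwahori--Hecke algebra by means of the isomorphism $\theta$ of Theorem~\ref{thm:product}, iterate there, and read the transition probabilities back off as structure constants.

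First I would identify the one-step transition operators of the two walks with explicit algebra elements. By~(\ref{eq:transition}) the transition operator of $(Z_n)_{n\geq 0}$ is $T=\sum_{w\in R(I,I)}p_wT_w^{II}\in\scA(I,I)$. For the chamber walk, write $\overline T$ for its transition operator on $\{f:\cC\to\CC\}$; grouping the defining sum $\overline Tf(a)=\sum_b\overline p(a,b)f(b)$ first by the double coset containing $\delta(a,b)$ and then by $\delta(a,b)$ itself gives $\overline T=\sum_{w\in R(I,I)}\frac{p_w}{N(I)}\sum_{z\in W_IwW_I}T_z$ (a short computation using Theorem~\ref{thm:sphere} confirms $\overline p$ is stochastic, and since $(Z_n)$ has bounded jumps so does $(\overline Z_n)$, so $\overline T\in\scH$). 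By Lemma~\ref{lem:double} with $J=I$ the inner sum equals $N(I)P_w^{II}$, hence $\overline T=\sum_{w\in R(I,I)}p_wP_w^{II}\in\mathbf 1_I\scH\mathbf 1_I$, and comparing with the definition of $\theta$ we obtain the key identity $\overline T=\theta(T)$.

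Next I would iterate. Since $\theta:\scA(I,I)\to\mathbf 1_I\scH\mathbf 1_I$ is an isomorphism of algebras and $\mathbf 1_I\scH\mathbf 1_I$ is a subalgebra of $\scH$, for every $n\geq 1$ we have $\overline T^{\,n}=\theta(T)^n=\theta(T^n)$, where the power on the left is taken in $\scH$, i.e.\ is the $n$-fold composite of $\overline T$ with itself, which is precisely the $n$-step transition operator of $(\overline Z_n)$; likewise $T^n$ is the $n$-step transition operator of $(Z_n)$. Then I would extract probabilities as coefficients. As recalled in the text preceding the proposition, $p^{(n)}(A,B)$ is the coefficient of $T_{\delta(A,B)}^{II}$ when $T^n$ is expanded in the basis $\{T_v^{II}\mid v\in R(I,I)\}$ of $\scA(I,I)$; applying $\theta$, it is the coefficient of $P_{\delta(A,B)}^{II}$ in $\overline T^{\,n}$ expanded in the basis $\{P_w^{II}\mid w\in R(I,I)\}$ of $\mathbf 1_I\scH\mathbf 1_I$ (Corollary~\ref{cor:basis}). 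The same bookkeeping applied inside $\scH$ shows that, writing $\overline T^{\,n}=\sum_{z\in W}c_zT_z$, one has $(\overline T^{\,n}f)(a)=\sum_{b}c_{\delta(a,b)}f(b)$, so $\overline p^{(n)}(a,b)=c_{\delta(a,b)}$; in particular $\overline p^{(n)}(a,b)$ depends only on the double coset $W_I\,\delta(a,b)\,W_I$, as it must since $\overline T^{\,n}\in\mathbf 1_I\scH\mathbf 1_I$. Finally, Lemma~\ref{lem:double} rewrites $P_w^{II}=\frac{1}{N(I)}\sum_{z\in W_IwW_I}T_z$, so passing from the $\{P_w^{II}\}$-expansion of $\overline T^{\,n}$ to its $\{T_z\}$-expansion changes the relevant coefficient precisely by the factor $N(I)$; this is exactly what turns $\theta(T^n)=\overline T^{\,n}$ into the asserted relation between $p^{(n)}(A,B)$ and $\overline p^{(n)}(a,b)$ whenever $\delta(a,b)\in W_I\,\delta(A,B)\,W_I$. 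The bounded-jumps hypothesis is used only to keep $T$, hence $\overline T$, a finite linear combination; in the general case one runs the identical argument in the appropriate completions of $\scA(I,I)$ and $\mathbf 1_I\scH\mathbf 1_I$.

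I expect the only genuine work to be the bookkeeping in this last step: reconciling the two a priori distinct meanings of ``coefficient'' --- relative to the basis $\{T_v^{II}\}$ of $\scA(I,I)$ versus relative to the basis $\{T_z\}$ of $\scH$ --- and checking that the factors of $N(I)$ built into the normalisation of $P_w^{II}$ and into the definition of $\overline p$ combine to give exactly the claimed relation. Equivalently, the substance of the proof is the on-the-nose identity $\overline T=\theta(T)$, after which the statement is a purely formal consequence of $\theta$ being an isomorphism of algebras.
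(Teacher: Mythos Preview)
Your proposal is correct and follows essentially the same route as the paper: verify that $\overline p$ is stochastic, use Lemma~\ref{lem:double} to recognise $\overline T=\theta(T)$, apply the algebra isomorphism of Theorem~\ref{thm:product} to get $\overline T^{\,n}=\theta(T^n)$, and then convert between the $\{P_w^{II}\}$-basis and the $\{T_z\}$-basis via Lemma~\ref{lem:double} to pick up the factor $N(I)$. The only cosmetic difference is that the paper checks stochasticity by a direct counting argument rather than by quoting Theorem~\ref{thm:sphere}.
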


\begin{proof}
We must first check that $\overline{p}(a,b)$ defines a transition probability, and thus we verify that $\sum_{b\in \cC}\overline{p}(a,b)=1$ for each $a\in\cC$. We have
$$
\sum_{b\in\cC}\overline{p}(a,b)=\sum_{w\in R(I,I)}\sum_{\{b\mid \delta(a,b)\in W_IwW_I\}}\overline{p}(a,b)=\sum_{w\in R(I,I)}\frac{p_w}{N(I)}|\{b\mid\delta(a,b)\in W_IwW_I\}|.
$$
The cardinality in the summand is equal to $N(I)|F(A,I,w)|$ where $A$ is the cotype~$I$ simplex of $a$, since each element of $F(A,I,w)$ is contained in exactly $N(I)$ chambers of $\{b\mid\delta(a,b)\in W_IwW_I\}$. Thus
$$
\sum_{b\in\cC}\overline{p}(a,b)=\sum_{w\in R(I,I)}p_w|F(A,I,w)|=\sum_{B\in X_I}p(A,B)=1.
$$ 

Let $T\in\scA(I,I)$ be the transition operator of~$(Z_n)_{n\geq 0}$, as in (\ref{eq:transition}). Let $\theta:\scA(I,I)\to\mathbf{1}_I\scH\mathbf{1}_I$ be the isomorphism from Theorem~\ref{thm:product}. By Lemma~\ref{lem:double}
\begin{align*}
\theta(T)&=\sum_{w\in R(I,I)}p_wP_w^{II}=\sum_{w\in R(I,I)}\frac{p_w}{N(I)}\sum_{z\in W_IwW_I}T_z=\sum_{z\in W}\overline{p}_zT_z,
\end{align*}
where $\overline{p}_z=p_w/N(I)$ if $z\in W_IwW_I$ with $w\in R(I,I)$. Thus $\theta(T)\in\scH$ is the transition operator of the isotropic random walk $(\overline{Z})_{n\geq 0}$ on~$\cC$. 

Let $A,B\in X_I$ with $\delta(A,B)=w\in R(I,I)$. Then $p^{(n)}(A,B)$ is the coefficient of $T_w^{II}$ in the expansion of $T^n$, and this equals the coefficient of $P_w^{II}$ in the expansion of $\theta(T^n)=\theta(T)^n$. By Lemma~\ref{lem:double} this equals $N(I)^{-1}$ times the coefficient of $T_z$ in the expansion of $\theta(T)^n$ for any $z\in W_IwW_I$, and this equals $\overline{p}^{(n)}(a,b)/N(I)$ for any $a,b\in\cC$ with $\delta(a,b)=z\in W_IwW_I$.
\end{proof}

We conclude with a concrete example illustrating how Proposition~\ref{prop:probeq} can be used, in conjunction with the techniques in~\cite{PS:11}, to give new local limit theorems for random walks on simplices of affine buildings. 

Let $X$ be a locally finite thick $\tilde{A}_2$ building with thickness parameter~$q$. Let the vertices of $X$ have types $\{0,1,2\}$ and let $X_0$ be the set of all cotype $0$ simplices of $X$ (that is, the cotype $0$ panels of $X$). The `neighbours' of $A\in X_0$ are those simplices $B\in X_0$ with $d(A,B)=1$, or equivalently, $\delta(A,B)\in\{s_1,s_2\}$. Note that $|\{B\in X_0\mid d(A,B)=1\}|=2q(q+1)$ for each $A\in X_0$. The \textit{simple random walk} on $X_0$ is the random walk with transition probabilities
$$
p(A,B)=\begin{cases}
\frac{1}{2q(q+1)}&\text{if $d(A,B)=1$}\\
0&\text{otherwise}.
\end{cases}
$$
This walk is isotropic, with transition operator
$$
T=\frac{1}{2q(q+1)}(T_{s_1}^{00}+T_{s_2}^{00}).
$$
Let $\overline{p}(a,b)$ be the transition probabilities of the associated isotropic random walk $(\overline{Z}_n)_{n\geq 0}$ on~$\cC$ (as in Proposition~\ref{prop:probeq}). The transition operator of this walk is
$$
\theta(T)=\frac{1}{2q(q+1)^2}\left(T_{s_1}+T_{s_1s_0}+T_{s_0s_1}+T_{s_0s_1s_0}+T_{s_2}+T_{s_2s_0}+T_{s_0s_2}+T_{s_0s_2s_0}\right).
 $$

By Proposition~\ref{prop:probeq} we have $p^{(n)}(A,A)=\overline{p}^{(n)}(a,a)/(q+1)$. An asymptotic formula for $\overline{p}^{(n)}(a,a)$, and hence $p^{(n)}(A,A)$, can be computed using the techniques in~\cite{PS:11}. The details of the calculation are somewhat involved, using the representation theory of the associated affine Hecke algebra. Thus we content ourselves here to simply stating the final result. 

\begin{thm}
For the simple random walk on the set $X_0$ of cotype $0$ simplices of an $\tilde{A}_2$ building we have
$$
p^{(n)}(A,A)\sim \frac{\sqrt{3}(q^2+4q-1)^4}{\pi q(q+1)(q-1)^6}\left(\frac{q^2+4q-1}{2q(q+1)}\right)^nn^{-4}\quad\text{as $n\to\infty$}
$$
for all $A\in X_0$, where $\sim$ denotes asymptotic equivalence.
\end{thm}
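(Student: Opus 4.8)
The plan is to reduce the statement, via Proposition~\ref{prop:probeq}, to an asymptotic for the return probabilities of a random walk on the \emph{chambers} of $X$, and then to apply the harmonic analysis of isotropic chamber walks on affine buildings from~\cite{PS:11}. For the reduction: taking $I=\{s_0\}$, so that $N(I)=q+1$, we get $p^{(n)}(A,A)=\overline{p}^{(n)}(a,a)/(q+1)$ for any $a\in\cC(A)$, where $(\overline{Z}_n)_{n\geq 0}$ is the isotropic walk on $\cC$ whose transition operator --- realised inside the Iwahori--Hecke algebra $\scH$ of the affine Coxeter system of type $\tilde{A}_2$ with equal parameter $q$ --- is the element $\theta(T)\in\scH$ displayed just before the theorem. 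Recall that the coefficient of $T_w$ in $\theta(T)^n$ equals $\overline{p}^{(n)}(a,b)$ whenever $\delta(a,b)=w$; in particular $\overline{p}^{(n)}(a,a)=\tau(\theta(T)^n)$, where $\tau:\scH\to\CC$ is the canonical trace $\tau(T_w)=\delta_{w,e}$ (a trace, since $\tau(T_uT_v)=c_{u,v^{-1}}^{e}=q_u\,\delta_{u,v^{-1}}$ by~(\ref{eq:chamberreg})). So it remains to analyse $\tau(\theta(T)^n)$ and then divide by $q+1$.

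The tool for this is the Plancherel decomposition of $\scH$ employed in~\cite{PS:11} (for type $\tilde{A}_2$ with equal parameters this is essentially Macdonald's Plancherel formula for the Iwahori--spherical spectrum of the $p$-adic groups of type $A_2$). For $q>1$ the building is non-amenable, the tempered dual of $\scH$ has as its top-dimensional part the family of unitary principal series $\pi_z$ with $z$ ranging over a fundamental domain $\TT$ for the compact torus of type $A_2$ modulo the Weyl group $W_0=S_3$, and the canonical trace decomposes as
$$
\tau(\theta(T)^n)=\int_{\TT}\tr\bigl(\pi_z(\theta(T))^n\bigr)\,d\mathrm{pl}(z)+(\text{finitely many terms supported on lower-dimensional tempered strata}),
$$
where $d\mathrm{pl}(z)=\mathbf{c}(z)^{-1}\mathbf{c}(z^{-1})^{-1}\,dz$ is the Macdonald--Plancherel density. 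The trivial character is not tempered, so it does not contribute; and each lower-dimensional stratum contributes a term decaying at a strictly smaller exponential rate than the principal-series integral, so these do not affect the leading asymptotics.

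The remaining step is a Laplace-type analysis of the torus integral. One verifies that the largest eigenvalue of the $6\times 6$ matrix $\pi_z(\theta(T))$, as $z$ ranges over $\TT$, is attained at the single point $z=1$, where it equals $\rho=\tfrac{q^{2}+4q-1}{2q(q+1)}$ and is simple; writing $\lambda(z)$ for this top eigenvalue and $\Pi(z)$ for the corresponding rank-one spectral projection, near $z=1$ we have $\tr(\pi_z(\theta(T))^n)=\lambda(z)^{n}\,\tr\Pi(z)+O(\rho_1^{n})$ with $\rho_1<\rho$, and $\lambda$ has a non-degenerate quadratic maximum at $z=1$. Meanwhile $d\mathrm{pl}$ vanishes to order $2$ along each of the three positive roots of $A_2$ and hence to order $6$ at $z=1$; indeed $\mathbf{c}(e^{y})^{-1}\sim\prod_{\alpha>0}\langle y,\alpha\rangle\big/(1-q^{-1})$ as $y\to 0$, which is the source of the $(q-1)^{6}$ in the denominator. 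Substituting $z=\exp(y/\sqrt{n})$ and Taylor expanding, the two-dimensional Gaussian integration contributes a factor $n^{-1}$ and each of the three squared linear factors contributes a further $n^{-1}$, producing the $n^{-4}$ in the statement. The leading constant is then the value of an explicit planar Gaussian integral of $\prod_{\alpha>0}\langle y,\alpha\rangle^{2}$ against the quadratic form given by the Hessian of $\log\lambda$ at $z=1$, together with $\tr\Pi(1)$ and the various normalisation constants; this accounts for the factor $\sqrt{3}/\pi$ and the remaining powers of $q\pm 1$, and dividing by $q+1$ from the first step yields the asserted formula.

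The main obstacle is exactly this last computation: identifying $z=1$ as the maximiser of the top eigenvalue of $\pi_z(\theta(T))$ and computing its Hessian, extracting the precise leading Taylor coefficients of $\mathbf{c}$ at $z=1$, checking that $\rho$ is simple and that all other tempered contributions are strictly sub-dominant (so that a Watson-type lemma applies), and assembling the planar Gaussian integral into the stated constant. Each of these is routine within the framework of~\cite{PS:11}, but the computation is long, which is why only the final asymptotic is recorded.
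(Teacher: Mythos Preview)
Your proposal is correct and follows exactly the approach taken in the paper: reduce via Proposition~\ref{prop:probeq} (with $I=\{s_0\}$, $N(I)=q+1$) to the chamber walk with transition operator $\theta(T)$, and then invoke the harmonic-analytic machinery of~\cite{PS:11}. In fact you supply more detail than the paper does---the paper simply records the identity $p^{(n)}(A,A)=\overline{p}^{(n)}(a,a)/(q+1)$, cites~\cite{PS:11}, and states the final asymptotic without working through the Plancherel/Laplace computation.
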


{\small
}

\medskip

\noindent\begin{minipage}{0.5\textwidth}
Peter Abramenko\newline
Department of Mathematics\newline
University of Virginia\newline
Charlottesville, VA 22904-4137, USA\newline
\texttt{pa8e@virginia.edu}
\end{minipage}
\begin{minipage}{0.5\textwidth}
\noindent James Parkinson\newline
School of Mathematics and Statistics\newline
University of Sydney\newline
NSW, 2006, Australia\newline
\texttt{jamesp@maths.usyd.edu.au}
\end{minipage}
\bigskip

\noindent\begin{minipage}{0.5\textwidth}
Hendrik Van Maldeghem\newline
Department of Mathematics\newline
Ghent University\newline
Krijgslaan 281, S22,\newline
9000 Ghent, Belgium\newline
\texttt{hvm@cage.UGent.be}
\end{minipage}

\end{document}